\newtheorem{theorem}{Theorem}[section]
\newtheorem{lemma}{Lemma}[section]
\newtheorem{proposition}{Proposition}[section]
\newtheorem{corollary}{Corollary}[section]
\theoremstyle{definition}
\newtheorem{definition}[theorem]{Definition}
\newtheorem{example}[theorem]{Example}
\newtheorem{remark}[theorem]{Remark}
\numberwithin{equation}{section}
\newcommand{\D}{\mathscr{D}}
\newcommand{\m}{\mathfrak}
\begin{document}

\pagestyle{plain}
\title{\bf DERIVATIONS OF LEAVITT PATH ALGEBRA}

\maketitle

\begin{center}
Viktor Lopatkin\footnote{South China Normal University, Guangzhou, China, \texttt{wickktor@gmail.com}}
\end{center}
\begin{abstract}
  In this paper, we describe the $K$-module $HH^1(L_K(\Gamma))$ of outer derivations of the Leavitt path algebra $L_K(\Gamma)$ of a row-finite graph $\Gamma$ with coefficients in an associative commutative ring $K$ with unit. We explicitly describe a set of generators of $HH^1(L_K(\Gamma))$ and relations among them. We also describe a Lie algebra structure of outer derivation algebra of the Toeplitz algebra. We prove that every derivation of a Leavitt path algebra can be extended to a derivation of the corresponding $C^*$-algebra.

\medskip

\textbf{Mathematics Subject Classifications}: 16S99, 13N15, 46L57, 05C25.

\textbf{Key words}: Gr\"obner--Shirshov basis; Leavitt path algebra; Derivations; Hochschild cohomology; the Toeplitz algebra; the Witt algebra; the $C^*$-algebras.
\end{abstract}

\section*{Introduction}
Throughout this note, all rings are assumed to be nonzero, associative with unit.

Given a row-finite directed graph $\Gamma$ and a field $K$, G. Abrams and A. Pino in \cite{AbP}, and independently P. Ara, M.A. Moreno, E. Pardo in \cite{AMP}, introduced the \textit{Leavitt path algebra $L_K(\Gamma)$}. Later, in \cite{T11} M. Tomforde generalized the construction of Leavitt path algebras by replacing the field with a commutative ring. This algebra is an algebraic analog of graph Cuntz--Krieger $C^*$-algebras. These algebras have attracted significant interest and attention, not only from ring theorists, but from analysts working in $C^*$-algebras, group theorists, and symbolic dynamicists as well (see for example \cite{30, AraGood, 73} and the survey \cite{Abrams-surv}). In \cite{AraGood} it has been proved that the Leavitt path algebra and their generalizations are hereditary algebras. It follows that the higher degree homologies vanish. P. Ara and G. Corti\~nas \cite{AraCortinas} calculated the Hochschild homology of Leavitt path algebras. But this result does not yield explicit formulas for the generators of $HH_*(L_K(\Gamma))$.

In this paper we deal with the Leavitt path algebra $L_K(\Gamma)$ of a row-finite (no necessary finite) graph $\Gamma$ with coefficients in a commutative ring $K$. We describe every derivation of the Leavitt path algebra $L_K(\Gamma)$ via explicit formulas (see Theorem \ref{maintheorem}). This description allows to describe the $K$-module $HH^1(L_K(\Gamma))$ of outer derivation (=the first Hochschild cohomology) of the Leavitt path algebra $L_K(\Gamma)$ (see Theorem \ref{genthm}) and  prove that every derivation of $L_\mathbb{C}(\Gamma)$ can be extended to the derivation of the $C^*$-algebra $C^*(\Gamma)$ (see Theorem \ref{Extend}). We also show that the vector space $HH^1(L_K(\Gamma))$, over a field $K$, has infinite dimension if the graph $\Gamma$ contains a path with infinite length and dimension $0$ otherwise. Finally, we describe a Lie algebra structure of outer derivation algebra of the Toeplitz algebra (see Section 5).

\vspace{2ex}

{\bf Acknowledgements:}  the author would like to express his deepest gratitude to {\sc Prof. Leonid A. Bokut'}, who has drawn the author's attention to the problems studied in this work. Special thanks are due to {\sc Prof. Efim Zelmanov} for very useful discussions and for having kindly clarified some very important details. I am also extremely indebted to my friend, my Chinese brother {\sc Dr. Zhang Junhuai} for the great support, without which the author's life would be very difficult. I am deeply grateful to {\sc Dr. Pasha Zusmanovich}, who suggested corrections and improvements.

\section{Leavitt path algebra}

We present the main definitions and notations which will use in this note. We also give a list of examples of known algebras which arise as Leavitt path algebras. Then we give a description of a basis ($K$-basis) of the Leavitt path algebra of a row-finite graph with coefficient in a commutative ring, which is an analog of Zelmanov et al'.s result (see \cite[Theorem 1]{Zel}).

\subsection{Definitions and Examples}

We begin by recalling some general notions of graph theory: a directed graph $\Gamma = (V,E, s, r)$ consists of two sets $V$ and $E$, called the set of vertices and edges, respectively, and two maps $s, r:E \to V$ called {\it source} and {\it range} (of edge), respectively.  For conveniences, we also set $s(v) = r(v) = v$ for every vertex $v\in V$. The graph is called {\it row-finite} if for all vertices $v \in V$, $|s^{-1}(v)| < \infty$. A vertex $v$ for which $s^{-1}(v)$ is empty is called a {\it sink}. A {\it path} $\m{p} = e_1\cdots e_\ell$ in the graph $\Gamma$ is a sequence of edges $e_1, \ldots, e_\ell \in E$ such that $r(e_i) = s(e_{i+1})$ for $i = 1, \ldots, \ell -1$. In this case we say that the path $\m{p}$ {\it starts} at the vertex $s(e_1)$ and {\it ends} at the vertex $r(e_\ell)$, we put $s(\m{p}): = s(e_1)$ and $r(\m{p}):=r(e_\ell)$.

Let $\m{p} = p_0p_1\cdots p_{z-1}p_z$ be the decomposition of the path $\m{p}$ via the edges $p_0,\ldots, p_z\in E$; we use the following notation: we set $\m{p}/p_0: = p_1\cdots p_{z-1}p_z$ and $\m{p}\backslash p_z = p_0p_1\cdots p_{z-1}$. If $\m{p}=p_0$ (resp. $\m{p}= p_z$), then we set $\m{p}/p_0:=r(\m{p})$ (resp. $\m{p}\backslash p_z: = s(\m{p})$). Finally, if $\m{p} \in E$, then, just for conveniences, we set $[\m{p}/p_0]^*:=r(\m{p})$ and $[\m{p}\backslash p_z]^*:=s(\m{p})$.

\begin{definition}[{\textit{cf.}~\cite[Definition 1.3]{AbP} and \cite[Definition 2.4]{T11}}] Let $K$ be an commutative ring and $\Gamma = (V,E)$ be a row-finite graph. The Leavitt path algebra $L_K(\Gamma)$ of the graph $\Gamma$ {\it with coefficients in} $K$ is a $K$-algebra presented by the set of generators $\{v, v \in V\}$, $\{e,e^*:e \in E\}$ and the set of relations:
\begin{itemize}
\item[{\rm (1)}] $vu = \delta_{v,u}v$, for all $v,u \in V$;
\item[{\rm (2)}] $s(e)e = er(e) = e,$ $r(e)e^* = e^* s(e) = e^*,$ for all $e \in E;$
\item[{\rm (3)}] $e^*f = \delta_{e,f}r(e),$ for all $e,f \in E;$
\item[{\rm (4)}] $v = \sum_{s(e)=v}ee^*$, for an arbitrary vertex $v \in V \setminus\{\mathrm{sinks}\}.$
\end{itemize}
\end{definition}

\begin{example}[{\bf Full matrix algebras}] Let $A_n$ denote the following graph $ \xymatrix{
    \bullet^{v_1} \ar@{->}[r]^{e_1} & \bullet^{v_2} \ar@{->}[r]^{e_2} & \ar@{.>}[r] & \bullet^{v_{n-1}} \ar@{->}[r]^{e_{n-1}} & \bullet^{v_n}.
    }
$ Then $L_K(A_n) \cong \mathbb{M}_n(K)$, the full $n\times n$ matrix algebra. Let us describe this isomorphism. Let $\mathsf{E}_{i,j}$ be the standard matrix units, here $1 \le i,j\le j$, then the isomorphism $L_K(A_n) \cong \mathbb{M}_n(K)$ can be described as follows:
\begin{align*}
  &v_i \leftrightarrow \mathsf{E}_{i,i}, & 1\le i\le n,\\
  &e_i \leftrightarrow \mathsf{E}_{i,i+1}, &1 \le i \le n-1,\\
  &e_i^* \leftrightarrow \mathsf{E}_{i+1,i}, & 1 \le i \le n-1.
\end{align*}
\end{example}

\begin{example}[{\bf The Laurent polynomial algebra}]\label{Loran}
Let $R_1$ denote the graph $\xymatrix{
    \bullet^v \ar@(dl,ul)[]^e}.$ Then $L_K(R_1) \cong K[t,t^{-1}]$, the Laurent polynomial algebra. The isomorphism is clear: $v \leftrightarrow 1,$ $e \leftrightarrow t,$ $e^* \leftrightarrow t^{-1}.$
\end{example}

\begin{example}[{\bf Leavitt algebra}]
For $1 \le \ell \le \infty$, let $R_\ell$ be a graph with $\ell$ edges and with one vertex $v$. Then $L_K(R_\ell) \cong L_K(1,\ell)$, the Leavitt algebra of order $\ell$, defined by the generators $\{x_i,y_i: 1 \le i \le \ell\}$, and relations $y_ix_i = \delta_{i,j},$ $\sum_{i=1}^\ell x_iy_i =1.$
\end{example}

\begin{example}[{\bf The Toeplitz algebra}]\label{Toeplitz}
For any field $K$, the Jacobson algebra, described in \cite{J}, is a $K$-algebra: $A = K \langle x,y: xy =1 \rangle.$ This algebra was the first example appearing in the literature of an algebra which is not directly finite, that is, in which there are elements $x,y$ for which $xy =1$ but $yx \ne 1$. Let $\mathrm{T}$ denote the ``Toeplitz graph'' $\xymatrix{
    \bullet^v \ar@(dl,ul)^e \ar@{->}[r]^f& \bullet^u.
   }
$ Then $L_K(\mathcal{T}) \cong A$. The isomorphism is described as follows:
\begin{align*}
  &v \leftrightarrow yx,\qquad u \leftrightarrow 1-yx,\\
  &e \leftrightarrow y^2x,\qquad f \leftrightarrow y-y^2x,\\
  &e^* \leftrightarrow yx^2,\qquad f^*\leftrightarrow x - yx^2.
\end{align*}
\end{example}

\subsection{A basis of Leavitt path algebras over a commutative ring}
The main goal of this subsection is to give a basis of the Leavitt path algebra $L_K(\Gamma)$ of a row-finite graph $\Gamma$ with coefficients in a commutative ring $K$, which is an analog of Zelmanov et al'.s result \cite[Theorem 1]{Zel}.

For an arbitrary vertex $v \in V$ which is not a sink, let us consider the set of edges $\{e_1,\ldots, e_\ell\}$ with the common source $v = s(e_1) = s(e_2) = \ldots =s(e_\ell)$. Let us choose the edge $e_1\in E$. We will refer to this edge as {\it special}. In other words, we fix a function $\vartheta:V \setminus \{\mbox{sinks}\} \to E$ such that $s(\vartheta(v)) = v$ for an arbitrary $v \in V \setminus \{\mbox{sinks}\}$. Denote by $\vartheta(V)$ the set of all special edges of $\Gamma$. Just for convinces, we write $\delta_{e,\vartheta} = 1$ if the edge $e$ is special and $\delta_{e,\vartheta} = 0$ otherwise.

Let us consider the following set of polynomials of $L_K(\Gamma)$,
\begin{equation}\label{GSB}
\mathbf{GS} = \mathbf{GS}_1 \cup \mathbf{GS}_2 \cup \mathbf{GS}_3\cup\mathbf{GS}_4,
\end{equation}
where
\begin{eqnarray*}
  \mathbf{GS}_1 &:=& \bigcup_{\substack{v,u\in V\\e\in E}} \{vu - \delta_{v,u}v,\, ve - \delta_{v, s(e)}e,\,ev - \delta_{v,r(e)}e \},\\
  \mathbf{GS}_2 &:=& \bigcup_{\substack{v\in V\\e,f \in E}} \{ve^* - \delta_{v, s(e)}e^*,\,e^*v - \delta_{v,s(e)}e^*,\, e^*f - \delta_{e,f}r(e) \},\\
  \mathbf{GS}_3 &:=& \bigcup_{e,f\in E} \{ee^* -s(e) + \sum_{s(f)=s(e)} ff^*: \vartheta(s(e))\ne f \},\\
  \mathbf{GS}_4 &:=& \bigcup_{e,f\in E} \{ef:r(e) \ne s(f) \}\cup  \{e^*f^*: r(f) \ne s(e)  \}\\ && \cup  \{ef^*: r(e) \ne r(f)  \}.
\end{eqnarray*}

It can be shown by straightforward computations that all these polynomials are closed with respect to compositions (see \cite{B,Be} and the survey \cite{BokSurv}), \textit{i.e.,} $\mathbf{GS}$ is the Gr\"obner--Shirshov basis of $L_K(\Gamma)$. Thus, by Composition--Diamond lemma, in the case when $K$ being a field, the set of irreducible words, \textit{i.e}., not containing the leading monomials of polynomials of $\mathbf{GS}$, is a $K$-basis of $L_K(\Gamma)$ and we get Zelmanov et al'.s result \cite[Theorem 1]{Zel}. However, this result has the same form in the case when $K$ is a commutative ring (see \cite[Theorem 3.7 ]{L-N}).

\begin{theorem}[{\textit{cf.} \cite[Theorem 1]{Zel} and \cite[Theorem 3.7 ]{L-N}}]\label{GSBLev}
Let $\Gamma$ be a row-finite graph and $K$ a commutative ring. Then, the following elements form a $K$-basis $\m{B}$ of the Leavitt path algebra $L(\Gamma)$: (1) the set of all vertices $V$, (2) the set of all paths $\mathfrak{P}$, (3) the set $\mathfrak{P}^* := \{\m{p}^*: \m{p} \in \mathfrak{P}\},$ (4) a set $\mathfrak{M}$ of words of the form $\m{wh}^*$, where $\m{w} = w_0\cdots w_z \in \mathfrak{P}$, $\m{h}^* = [h_0 \cdots h_z]^* = h_z^* \cdots h_0^* \in \mathfrak{P}^*$, $w_i,h_j \in E$, are paths that end at the same vertex $r(\m{w}) = r(\m{h})$, with the condition that the edges $w_z$ and $h_z$ are either distinct or equal, but not special.
\end{theorem}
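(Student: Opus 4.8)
The plan is to produce the basis by a Gröbner–Shirshov basis (Composition–Diamond lemma) argument, exactly as in the setup the paper has been building towards with the choice of a special edge function $\vartheta$. First I would fix a monomial order on the free $K$-algebra generated by $\{v : v \in V\} \cup \{e, e^* : e \in E\}$: a degree-lexicographic order refining a well-chosen total order on the alphabet (vertices smallest, then edges, then starred edges, or whatever ordering makes the leading terms of the defining relations come out right). The defining relations (1)–(4), rewritten using the special edge as $\vartheta(v)\vartheta(v)^* = v - \sum_{r=2}^{\ell} e_r e_r^*$, become a rewriting system; the first task is to list all its leading monomials: $vu \;(v \ne u)$, $vv$, $s(e)e$, $er(e)$, $r(e)e^*$, $e^*s(e)$, $e^*f\;(e\ne f)$, $e^*e$, and $\vartheta(v)\vartheta(v)^*$. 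The normal-form monomials with respect to this system are precisely the words in $\m{B}$: an arbitrary reduced word cannot contain an adjacent mismatched vertex/edge pair, cannot contain $e^*f$, and cannot contain the special product $\vartheta(v)\vartheta(v)^*$ — which forces a reduced word to have the shape (path)$\cdot$(starred path) meeting at a common vertex, with the last pair of edges either distinct or, if equal, not the special one. This matches clauses (1)–(4) of the theorem, so the normal-form count is right.

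Next I would verify that this rewriting system is closed under compositions, i.e. that all ambiguities (overlaps and inclusions of leading terms) resolve to zero modulo the system — this is the content of the Composition–Diamond lemma, and once it holds, the normal-form monomials form a $K$-basis (here one needs $K$ merely commutative with unit, not a field, but the Composition–Diamond lemma over a commutative ring is available precisely when the leading coefficients are units, and here every leading coefficient is $1$, so it applies; this is the point of Tomforde's generalization and of citing \cite{Zel}). The overlaps to check are things like $e^* \cdot e \cdot f$, $r(e)\cdot e^* \cdot s(e)$, $\vartheta(v)\vartheta(v)^* \cdot \vartheta(v)$ overlapping with $\vartheta(v)^* \vartheta(v) = r(\vartheta(v))$, the self-overlap of $\vartheta(v)\vartheta(v)^*$ with itself when ranges allow, and the interaction of $v = \vartheta(v)\vartheta(v)^* + \sum e_r e_r^*$ with $e_s^* \cdot v$ for various $s$. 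Each should reduce ambiguously to the same element; the relation (4) is built so that $v$ acts as a local identity on both $\vartheta(v)\vartheta(v)^*$ and the other $e_re_r^*$, which is what makes these compositions trivial.

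The main obstacle I anticipate is the bookkeeping around the special edge: when $w_z = h_z = \vartheta(r(\m{w}\backslash w_z))$ is forbidden, one must check that every such word is genuinely rewritten to a $K$-combination of shorter $\m{B}$-words and that nothing is lost or double-counted — in particular that the words of the form $\m{w}\m{h}^*$ with the last edges \emph{equal but non-special} are not further reducible (they are not, since the only relation touching $ee^*$ with $e$ non-special is the expansion of $s(e) = \sum_f ff^*$, which raises rather than lowers the leading term in our order). A secondary subtlety is handling infinite (row-finite but not finite) graphs: the order is still well-founded on each word since words have finite length, and the relation (4) is a finite sum by row-finiteness, so the Composition–Diamond machinery goes through unchanged; I would remark on this explicitly rather than belabor it. Finally, to close the argument I would note that $\m{B}$ spans (immediate, since every word reduces to a $K$-combination of normal forms) and is $K$-linearly independent (immediate from the Composition–Diamond lemma), giving the claimed basis.
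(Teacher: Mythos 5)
Your proposal is correct and follows essentially the same route as the paper: both arguments exhibit the defining relations (with relation (4) rewritten via the special edge $\vartheta$) as a Gr\"obner--Shirshov basis, invoke the Composition--Diamond lemma to identify the irreducible words with the set $\m{B}$, and pass from a field to a commutative ring by observing that all leading coefficients (and all coefficients arising in compositions) are $\pm 1$. The only cosmetic difference is that the paper also lists explicitly the ``orthogonality'' leading monomials $ef$ with $r(e)\ne s(f)$, $e^*f^*$ with $r(f)\ne s(e)$, and $ef^*$ with $r(e)\ne r(f)$ (its set $\mathbf{GS}_4$), which your normal-form description subsumes implicitly.
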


\begin{remark}
Just for convinces, we frequently use the following notations: (1) $\m{wh^*} \in \m{P}$ means $\m{h} = r(\m{w})$, (2) $\m{wh^*} \in \m{P}^*$ means $\m{w} = r(\m{h})$, and (3) $\m{wh^*} \in V$ means $\m{w} = \m{h} = v \in V$.
\end{remark}

\section{The Module of Derivations of Leavitt Path Algebra.}
We begin by recalling some general notions. Let $\Lambda$ be an algebra over a commutative ring $K$. Let us denote by $\mathbf{Der}_K(\Lambda)$ (or shortly $\mathbf{Der}(\Lambda)$) the $K$-module of all derivations of $\Lambda$. As well known, every element $\lambda\in\Lambda$ determines \textit{the inner derivation} $\mathbf{ad}_\lambda$, $\mathbf{ad}_{\lambda}(a) = \lambda a - a \lambda$, $a\in \Lambda$. Denote by $\mathbf{ad}(\Lambda)$ the $K$-module of inner derivations of $\Lambda$. Further, the $K$-module $HH^1(\Lambda): = \mathbf{Der}(\Lambda)/ \ \mathbf{ad}(\Lambda)$ is called \textit{the first Hochschild cohomology of} $\Lambda$ or \textit{the module of outer derivations of $\Lambda$.}

Let $\Gamma$ be a row-finite graph, $K$ a commutative ring, $L_K(\Gamma)$ the Leavitt path algebra over $K$ and $\mathscr{D}$ a derivation of $L_K(\Gamma)$. In this section, we aim to describe a set of generators of the $K$-module $\mathbf{Der}(L_K(\Gamma))$.

For $L_K(\Gamma)$ we have the $K$-basis $\m{B}$ (see Theorem \ref{GSBLev}) and we can write $\mathscr{D}(x) = \sum_{\m{b} \in \m{B}}{F}_{\m{b}}(x)\m{b}$, for every $x \in L_K(\Gamma)$ where $F_{\m{b}}:L_K(\Gamma) \to K$ are functionals and almost all $F_{\m{b}}(x)$ are zero. Conversely, let $\{F_\m{b}\}_{\m{b} \in \m{B}}$ be a family of functionals $F_{\m{b}}:L_K(\Gamma) \to K$ such that almost all $F_{\m{b}}(x)$ are zero, then this family determines the linear map $\mathscr{D}(\cdot) = \sum_{\m{b} \in \m{B}}F_\m{b}(\cdot)  \m{b}: L_K(\Gamma) \to L_K(\Gamma)$. We ask when a  family of functionals determines a derivation. The following Theorem answers this question.

\begin{theorem}\label{maintheorem}
  Let $\Gamma$ be a row-finite graph and $K$ a commutative ring. Every derivation $\mathscr{D}$ of the Leavitt path algebra $L_K(\Gamma)$ can be described as follows:
\begin{eqnarray*}
  \mathscr{D}(v) &=&\sum_{\m{wh^*} \in \m{B}}F_{\m{wh}^*}(v) \cdot \mathbf{ad}_{\m{wh}^*}(v),\\
  \mathscr{D}(e) &=& \sum_{\m{wh^*}\in \m{B}}F_\m{wh^*}(s(e))\mathbf{ad}_{\m{wh}^*}(e) +\sum_{\substack{ \m{wh^*} \in \m{B} \\ s(\m{w}) =s(e)\\ s(\m{h}) =r(e)}}F_{\m{wh}^*}(e)\m{wh}^*,\\
  \mathscr{D}(e^*) &=&  \sum_{\m{wh^*}\in \m{B}}F_{\m{wh^*}}(s(e))\mathbf{ad}_{\m{wh^*}}(e^*)+
  \sum_{\substack{\m{wh^*} \in \m{B}  \\ s(\m{w}) =r(e)\\ s(\m{h}) =s(e)}}F_{\m{wh}^*}(e^*)\m{wh}^*,
\end{eqnarray*}
for every $v\in V$, $e\in E$, $e^*\in E^*$. Further, almost all coefficients are zero and they satisfy the following equations:
\begin{align}
  &F_\m{p}(e^*)+F_{\m{p}ff^*}(e^*)+F_{e\m{p}f}(f)=0 ,& e\m{p}f \ne 0, \label{theq3}\\
  &F_{[f\m{p}e]^*}(e^*) + F_{\m{p}^*}(f) + F_{e[\m{p}e]^*}(f) = 0, & f\m{p}e \ne 0, \label{theq6}\\
  &F_{\m{w}[f\m{h}]^*}(e^*) + F_{e\m{wh}^*}(f) = 0, & \m{w}[f\m{h}]^* \ne \m{p}ff^*, e\m{wh}^* \ne e[\m{p}e]^*,\label{theq8}
\end{align}
here $\m{p},\m{w},\m{h} \in \m{P}\cup V$ and $e,f\in E$, $s(e) =s(f)$.
\end{theorem}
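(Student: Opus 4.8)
The plan is to reconstruct a derivation $\mathscr{D}$ entirely from its values on the generators $v$, $e$, $e^*$, then extract the structural constraints forced by the defining relations (1)--(4) of $L_K(\Gamma)$. First I would write, for each generator $x$, the expansion $\mathscr{D}(x) = \sum_{\m{b}\in\m{B}} F_{\m{b}}(x)\m{b}$ against the basis $\m{B}$ of Theorem~\ref{GSBLev}, and then impose the relations one at a time. Applying $\mathscr{D}$ to $vu=\delta_{v,u}v$, to $s(e)e = e = er(e)$, and to the analogous relations for $e^*$ gives, after multiplying on the left/right by the appropriate idempotent vertices and using that the $\m{wh}^*$ decompose according to their starting vertices, that the "vertex part" of $\mathscr{D}(e)$ and $\mathscr{D}(e^*)$ is governed by the same functionals $F_{\m{wh}^*}(s(e))$ that appear in $\mathscr{D}(s(e))$; this is exactly the assembly of the $\mathbf{ad}_{\m{wh}^*}$ terms in the displayed formulas. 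The remaining freedom in $\mathscr{D}(e)$ (resp. $\mathscr{D}(e^*)$) is a sum over basis elements $\m{wh}^*$ with $s(\m{w})=s(e)$, $s(\m{h})=r(e)$ (resp. the swapped condition), which is the genuinely new data, recorded by the coefficients $F_{\m{wh}^*}(e)$.

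Next I would turn to relations (3) and (4), i.e. $e^*f = \delta_{e,f} r(e)$ and $v = \sum_{s(e)=v} ee^*$, which are the relations that actually couple the $e$-data to the $e^*$-data and produce the three equations \eqref{theq3}, \eqref{theq6}, \eqref{theq8}. The key computation is to apply $\mathscr{D}$ to $e^*f$ (with $s(e)=s(f)$, so the product can be nonzero in the expansions even when $e\ne f$) and to $\sum_{s(e)=v} ee^*$, expand using the product rule, rewrite every resulting monomial in normal form via the Gröbner--Shirshov rewriting system $\mathbf{GS}$ of the proof of Theorem~\ref{GSBLev}, and then read off the coefficient of each fixed basis word $\m{b}\in\m{B}$. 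Because the rewriting can merge several monomials onto the same normal form — in particular the reductions $ee^*\mapsto s(e)-\sum_{r\ge 2} e_re_r^*$ coming from $\mathbf{GS}_3$ — the vanishing of each such coefficient is not a single product-rule identity but a sum of three terms; collecting these yields precisely \eqref{theq3} for words of the type $e\m{p}f$, \eqref{theq6} for words of the type $e[\m{p}e]^*$ of "special" shape, and \eqref{theq8} for the generic words $e\m{wh}^*$ that avoid those two special shapes. The side conditions attached to each equation ($e\m{p}f\ne 0$, etc.) are exactly the conditions that the relevant monomials are composable and land on a genuine basis element.

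Finally I would argue the converse: given any family of functionals $\{F_{\m{b}}\}$, almost all zero on each element, satisfying \eqref{theq3}--\eqref{theq8}, the formulas of the theorem define a linear map on generators, and one checks the Leibniz rule on each defining relation of $L_K(\Gamma)$. By the universal property of the presentation it suffices to verify that the images of the left- and right-hand sides of relations (1)--(4) agree; relations (1), (2) hold by the way the $\mathbf{ad}_{\m{wh}^*}$ terms were built, and relations (3), (4) hold precisely because \eqref{theq3}, \eqref{theq6}, \eqref{theq8} are the cocycle-type conditions that make the product-rule discrepancies cancel. A useful bookkeeping device here is that $\sum_{\m{wh}^*} F_{\m{wh}^*}(s(e))\mathbf{ad}_{\m{wh}^*}(\,\cdot\,)$ is visibly a derivation (a possibly infinite but locally finite sum of inner derivations, well-defined on each element because of the finiteness assumption), so the entire burden reduces to checking that the "new" part, the map sending $e\mapsto \sum F_{\m{wh}^*}(e)\m{wh}^*$ and $e^*\mapsto \sum F_{\m{wh}^*}(e^*)\m{wh}^*$ and $v\mapsto 0$, is a derivation modulo the inner part, which is where \eqref{theq3}--\eqref{theq8} enter.

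The main obstacle I anticipate is the normal-form bookkeeping in the second paragraph: controlling, for a fixed target basis word $\m{b}$, exactly which monomials appearing in $\mathscr{D}(e^*f)$ or in $\mathscr{D}\bigl(\sum ee^*\bigr)$ rewrite to $\m{b}$ under $\mathbf{GS}$, and checking that no others contribute. This is what forces the precise three-term shape of \eqref{theq3}--\eqref{theq8} and the precise side conditions; getting the case division right (generic word vs. the two special shapes $e\m{p}f$ with $f$ possibly special and $e[\m{p}e]^*$ with the trailing edge special) is the delicate part, and it is also where the choice of the special-edge function $\vartheta$ and the ensuing rewriting rule $ee^*\mapsto s(e)-\sum_{r\ge2}e_re_r^*$ is used essentially rather than cosmetically. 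Everything else — the Leibniz checks and the local-finiteness arguments — is routine once that combinatorial core is in place.
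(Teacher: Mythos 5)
Your proposal is correct and takes essentially the same route as the paper: the paper likewise reduces the derivation condition to the vanishing of $\mathscr{D}$ on the Gr\"obner--Shirshov relations and defers the relation-by-relation bookkeeping to Lemmas \ref{Du}--\ref{sumee*}, where relations (1)--(2) force the displayed forms of $\mathscr{D}(v)$, $\mathscr{D}(e)$, $\mathscr{D}(e^*)$ and relations (3)--(4) produce the three functional equations (with the relation-(4) constraints shown to be largely consequences of the relation-(3) ones). The normal-form merging you flag as the delicate core is exactly what those computations carry out.
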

\begin{proof}
Let $\D:L_K(\Gamma) \to L_K(\Gamma)$ be a $K$-linear map. Recall that (\ref{GSB}) is the Gr\"obner--Shirshov basis $\mathbf{GS}$ of $L_K(\Gamma)$. Hence, by the Leibnitz product rule, it follows that the map $\mathscr{D}$ is a derivation of $L_K(\Gamma)$ if and only if $\D(\varphi)=0$ for every $\varphi \in \mathbf{GS}$. Let $\{F_{\m{b}}\}_{\m{b} \in \m{B}}$ be a family of functionals determines the linear map $\D$. Using the Leibnitz product rule and substituting $\sum_{\m{b} \in\m{B}}F_{\m{b}}(\cdot)\m{b}$ for $\D(\cdot)$ in every $\varphi \in \mathbf{GS}$, we obtain relations for these functionals. By the straightforward computations, one can easily get these relations and just for readers convenience, we present these computations in the last section of this paper.
\end{proof}

Thus the preceding Theorem describes every element of the $K$-module $\mathbf{Der}(L_K(\Gamma))$ of derivations of the Leavitt path algebra $L_K(\Gamma)$. Our next aim is to describe a set of generators of $\mathbf{Der}(L_K(\Gamma))$.

\begin{corollary}\label{genofDer}
  The $K$-module $\mathbf{Der}(L_K(\Gamma))$ is generated by the following set of derivations:
  \begin{itemize}
  \item[(1)] $\bigcup_{\m{pq^*} \in \m{B}}\{\mathbf{ad}_{\m{pq^*}}\}$,
  \item[(2)] $\bigcup_{\substack{\m{wh^*} \in \m{M}, e \in E \\ s(\m{w}) = s(\m{h})}}\{\D_{\m{wh}^*}, \D_{ee^*}\}$, where $\D_{\m{wh}^*}(h_0) = \m{w}[\m{h}/h_0]^*,  \D_{\m{wh}^*}(w_0^*) = -[\m{w}/w_0]\m{h}^*$, and $\D_{\m{wh^*}} = 0$ otherwise,
  \item[(3)] $\bigcup_{\substack{\m{c} \in \m{P} \\ s(\m{c}) = r(\m{c})}}\{\D_\m{c}\}$, where  $\D_\m{c}(e) = \m{c}e$, for $e\in E$,  $\D_\m{c}(c_0^*) = -\m{c}/c_0$, and $\D_\m{c} = 0$ otherwise,
  \item[(4)] $\bigcup_{\substack{\m{c} \in \m{P} \\ s(\m{c}) = r(\m{c})}}\{\D_{\m{c}^*}\}$, where $\D_{\m{c}^*}(e^*) = -[\m{c}e]^*$, for $e^* \in E^*$,  $\D_{\m{c}^*}(c_0) = [\m{c}/c_0]^*$, and $\D_{\m{c}^*} = 0$ otherwise.
\end{itemize}
\end{corollary}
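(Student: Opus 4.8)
Everything will be extracted from Theorem~\ref{maintheorem}. That theorem identifies the derivations of $L_K(\Gamma)$ with the almost-everywhere-vanishing families of scalars $\bigl(F_{\m{wh}^*}(v)\bigr)$, $\bigl(F_{\m{wh}^*}(e)\bigr)$, $\bigl(F_{\m{wh}^*}(e^*)\bigr)$ ($\m{wh}^*\in\m{B}$, $v\in V$, $e\in E$) that satisfy \eqref{theq3}, \eqref{theq6}, \eqref{theq8}, and it recovers the derivation from such a family through its values on the generators $v$, $e$, $e^*$. So the statement reduces to two points: (a) each map in (1)--(4) is a derivation, and (b) an arbitrary derivation is a locally finite $K$-linear combination of those maps. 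For (b) the plan is to peel off an inner part so that the remainder kills every vertex, and then to exhaust the remaining coefficients $F_{\m{wh}^*}(e)$, $F_{\m{wh}^*}(e^*)$ by sorting the index $\m{wh}^*$ according to its combinatorial type in $\m{B}$.

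For (a): the maps in (1) are inner, hence derivations. For (2)--(4) I would use the criterion already exploited in the proof of Theorem~\ref{maintheorem}: a $K$-linear map prescribed on $v$, $e$, $e^*$ extends to a derivation of $L_K(\Gamma)$ exactly when it annihilates every polynomial of the Gr\"obner--Shirshov basis $\mathbf{GS}$ of \eqref{GSB}. Feeding $\D_{\m{wh}^*}$, $\D_{ee^*}$, $\D_\m{c}$, $\D_{\m{c}^*}$ into $\mathbf{GS}_1,\dots,\mathbf{GS}_4$ and applying the Leibniz rule, the cancellations are precisely what the hypotheses in the definitions guarantee: $s(\m{w})=s(\m{h})$ makes $s(h_0)\cdot\m{w}[\m{h}/h_0]^{*}=\m{w}[\m{h}/h_0]^{*}$, while the cycle condition $s(\m{c})=r(\m{c})$ makes $\m{c}v$ in the $\mathbf{GS}_3$-relation collapse to $\m{c}$ and telescope against $\m{c}=c_0(\m{c}/c_0)$. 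It is worth noting that for the one-loop graph $\Omega(1)$ of Example~\ref{Loran} the maps $\D_\m{c}$, $\D_{ee^*}$, $\D_{\m{c}^*}$ are exactly the Witt-algebra derivations $t^{\,m+1}\frac{d}{dt}$, $m\in\mathbb{Z}$, which is the origin of the infinite dimensionality announced in the introduction.

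For (b): start from a derivation $\mathscr{D}$ with its family $\bigl(F_{\m{b}}\bigr)$. In the three formulas of Theorem~\ref{maintheorem} the $\mathbf{ad}$-summands are, term by term, a value of $F_{\m{wh}^*}$ at a vertex times $\mathbf{ad}_{\m{wh}^*}$ evaluated at a generator, and the values of $F_{\m{wh}^*}$ at the two endpoints of $\m{wh}^*$ are tied together by the relation coming from $vu=\delta_{v,u}v$; hence the $\mathbf{ad}$-summands assemble into a single locally finite $K$-combination of the inner derivations of type (1). Subtracting it from $\mathscr{D}$, we may assume $\mathscr{D}(v)=0$ for every $v\in V$, so that $\mathscr{D}(e)$ and $\mathscr{D}(e^*)$ are governed only by the coefficients $F_{\m{wh}^*}(e)$, $F_{\m{wh}^*}(e^*)$ with the source restrictions of Theorem~\ref{maintheorem}. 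Now sort the index $\m{wh}^*$ by type in $\m{B}$: a path $\m{c}$, a path$^*$ $\m{c}^*$, or a genuine mixed word of $\m{M}$. In the first two cases the source restrictions together with \eqref{theq3} and \eqref{theq6} force the relevant path to run from $s(e)$ back through the edge in question, i.e.\ to be a cycle, so those contributions are matched and cancelled by suitable $\D_\m{c}$ and $\D_{\m{c}^*}$; the mixed-word contributions are matched by the $\D_{\m{wh}^*}$ together with the $\D_{ee^*}$, the latter absorbing the corrections that \eqref{theq3} ties to a path $\m{p}$ through an $ff^{*}$ with $f$ the special edge $\vartheta(s(f))$. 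After subtracting this (locally finite) combination from $\mathscr{D}$, the residual derivation has all $F$-coefficients equal to zero, hence is the zero map, and $\mathscr{D}$ lies in the span of (1)--(4).

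The crux, and where I expect the real work, is the bookkeeping in the last step: the relations \eqref{theq3}, \eqref{theq6}, \eqref{theq8} are not diagonal in the coefficients $F_{\m{wh}^*}(e)$, so a single elementary derivation realises a whole solution orbit of these relations rather than one coefficient, and one must keep careful track of how the $ff^{*}$-type corrections get distributed among the $\D_{ee^*}$ attached to the chosen special edges $\vartheta$. One also has to confirm local finiteness of the combination produced: for any fixed generator $v$, $e$ or $e^*$ only finitely many of the listed derivations are nonzero on it, because $\mathscr{D}$ itself has finite support there and passing to the elementary pieces does not enlarge supports, and the value on a general element of $L_K(\Gamma)$ then follows by the Leibniz rule. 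Together (a) and (b) give the corollary.
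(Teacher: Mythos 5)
Your plan follows the paper's proof essentially verbatim: the paper likewise splits $\D = \mathscr{J} + \widehat{\D}$ with $\widehat{\D}$ vanishing on all vertices, and then uses \eqref{theq3}, \eqref{theq6} and \eqref{theq8} to rewrite the expansions of $\widehat{\D}(e)$ and $\widehat{\D}(e^*)$ term by term as values of the elementary derivations $\D_{\m{w}}$, $\D_{\m{w}ee^*}$, $\D_{ee^*}$, $\D_{[e\m{h}]^*}$, $\D_{\m{w}[e\m{h}]^*}$, with \eqref{theq8} supplying exactly the matching of the $e$-side and $e^*$-side coefficients that you describe. The only difference is one of completeness: the paper carries out explicitly the bookkeeping you defer as ``the crux,'' so your plan is the paper's proof with its main computation left unexecuted.
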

\begin{proof}
From Theorem \ref{maintheorem} it follows that every derivation $\D$ has the following form $\D = \mathscr{J} + \widehat{\D}$, where $\mathscr{J}$ is an inner derivation of $L_K(\Gamma)$ and $\widehat{\D}$ is defined as follows $\widehat{\D}(v) =0$, $\widehat{\D}(e) = \sum_{\substack{\m{wh^*} \in \m{B} \\ s(\m{w}) = s(e)\\ s(\m{h}) = r(e) }}F_{\m{wh^*}}(e)$ and $\widehat{\D}(e^*) = \sum_{\substack{\m{wh^*} \in \m{B}\\ s(\m{w}) = r(e)\\ s(\m{h}) = s(e) }}F_{\m{wh^*}}(e^*)$ for every $v\in V$, $e \in E$.

Using (\ref{theq3}), we get $F_{\m{w}e}(e) = - F_{\m{w}/w_0}(w_0^*) - F_{\m{w}/w_0ee^*}(w_0^*)$. Then we obtain
\begin{eqnarray*}
 \widehat{\D}(e) &=&  -\sum_{\substack{\m{w} \in \m{P} \\ s(\m{w}) = s(e)}} F_{\m{w}/w_0}(w_0^*)  \m{w}e - \sum_{\substack{ \m{w} \in \m{P} \\ s(\m{w}) = s(e) }}F_{\m{w}/w_0ee^*}(w_0^*) \m{w}e\\
 &&+ F_e(e)e+ \sum_{\substack{\m{w} \in \m{P}\cup V \\ s(\m{w}) = s(e) \\ r(\m{w}) = r(e)\\ w_z \ne e}} F_{\m{w}}(e)\m{w}   + \sum_{\substack{\m{h}^* \in \m{P^*} \cup V\\ s(\m{h}) = r(e) \\ r(\m{h}) = s(e) }} F_{\m{h^*}}(e)\m{h^*}\\
 &&+\sum_{\substack{\m{h} \in \m{P} \cup V \\ f \in E \\ s(f) = s(e)}} F_{f[\m{h}f]^*}(e) f[\m{h}f]^* + \sum_{\substack{\m{wh^*} \in \m{M} \\h_z \ne  \m{w}}}F_{\m{wh^*}}(e)\m{wh^*}.
\end{eqnarray*}

Next, by (\ref{theq6}), $F_{[\m{h}e]^*}(e^*) = - F_{[\m{h}/h_0]^*}(h_0) - F_{e[\m{h}/h_0e]^*}(h_0)$, so that
\begin{eqnarray*}
  \widehat{\D}(e^*) &=& -\sum_{\substack{\m{h} \in \m{P} \\ s(\m{h}) = s(e)}} F_{[\m{h}/h_0]^*}(h_0)[\m{h}e]^* -  \sum_{\substack{\m{h} \in \m{P} \\ s(\m{h}) = s(e)}} F_{e[\m{h}/h_0e]^*}(h_0) [\m{h}e]^*\\
  &&+F_{e^*}(e^*)e^*+  \sum_{\substack{\m{h} \in \m{P}\cup V \\ s(\m{h})=s(e) \\ r(\m{h}) = r(e)\\ h_z \ne e}} F_{\m{h^*}}(e^*)\m{h^*} +  \sum_{\substack{\m{w} \in \m{P} \cup V \\ s(\m{w}) = r(e) \\ r(\m{w}) = s(e)}} F_{\m{w}}(e^*)\m{w}\\
  &&+  \sum_{\substack{\m{w} \in \m{P} \cup V \\ f \in E \\ s(f) = s(e) }} F_{\m{w}ff^*}(e) \m{w}ff^* +  \sum_{\substack{\m{wh^*} \in \m{M}\\w_z \ne  \m{h} }} F_{\m{wh^*}} (e^*)\m{wh^*}.
\end{eqnarray*}

Then we can write
\begin{eqnarray*}
  \widehat{\D}(e) &=&  -\sum_{\substack{\m{w} \in \m{P} \\ s(\m{w}) = s(e)}} F_{\m{w}/w_0}(w_0^*)  \D_{\m{w}}(e) - \sum_{\substack{ \m{w} \in \m{P} \\ s(\m{w}) = s(e) }}F_{\m{w}/w_0ee^*}(w_0^*) \D_{\m{w}ee^*}(e) \\
   &&+ F_{e}(e)\D_{ee^*}(e)+ \sum_{\substack{\m{w} \in \m{P}\cup V \\ s(\m{w}) = s(e) \\ r(\m{w}) = r(e)\\ w_z \ne e}} F_{\m{w}}(e) \D_{\m{w}e^*}(e)   + \sum_{\substack{\m{h}^* \in \m{P^*} \cup V\\  s(\m{h}) = r(e) \\ r(\m{h}) = s(e) }} F_{\m{h^*}}(e)\D_{[e\m{h}]^*}(e)\\
   &&+\sum_{\substack{\m{h} \in \m{P} \cup V \\  f \in E \\ s(f) = s(e)}} F_{f[\m{h}f]^*}(e)\D_{f[e\m{h}f]^*}(e) +\sum_{\substack{\m{wh^*} \in \m{M}\\s(\m{w}) = s(e) \\ h_z \ne \m{w} }}F_{\m{wh^*}}(e)\D_{\m{w}[e\m{h}]^*}(e),
\end{eqnarray*}
and
\begin{eqnarray*}
  \widehat{\D}(e^*) &=& -\sum_{\substack{\m{h} \in \m{P} \\ s(\m{h}) = s(e)}} F_{[\m{h}/h_0]^*}(h_0) \D_{\m{h^*}}(e^*) -  \sum_{\substack{\m{h} \in \m{P} \\ s(\m{h}) = s(e)}} F_{e[\m{h}/h_0e]^*}(h_0) \D_{e[e\m{h}]^*}(e^*)\\
   &&+F_{e^*}(e^*)\D_{ee^*}(e^*)+  \sum_{\substack{\m{h} \in \m{P}\cup V \\ s(\m{h})=s(e) \\ r(\m{h}) = r(e)\\ h_z \ne e}} F_{\m{h^*}}(e^*) \D_{e\m{h^*}}(e^*) +  \sum_{\substack{\m{w} \in \m{P} \cup V \\ s(\m{w}) = r(e) \\ r(\m{w}) = s(e)}} F_{\m{w}}(e^*)\D_{e\m{w}}(e^*)\\
   &&+  \sum_{\substack{\m{w} \in \m{P} \cup V \\ f \in E \\ s(f) = s(e) }} F_{\m{w}ff^*}(e) \D_{e\m{w}ff^*}(e^*) +  \sum_{\substack{\m{wh^*} \in \m{M}\\s(\m{h}) = s(e)\\w_z \ne  \m{h} }} F_{\m{wh^*}} (e^*)\D_{e\m{wh^*}}(e^*).
\end{eqnarray*}

To conclude the proof, it remains to note (by (\ref{theq8})) that $F_{\m{w}[f\m{h}]^*}(e^*) = - F_{e\m{wh^*}}(f)$, $f\in E$.
\end{proof}

\begin{remark}\label{*cor}
Let us put $(e^*)^*:=e$ and $(e\cdot f)^*: = f^* \cdot e^*$ for every $e,f\in E \cup E^*$. Then Corollary \ref{genofDer} implies that $(\D_\m{c}(e))^* = -\D_{\m{c}^*}(e^*)$ and $(\D_{\m{wh}^*}(e))^* =- \D_{\m{hw^*}}(e^*)$, for every $e\in E \cup E^*$. Since $\D_{\m{c}}(v) = \D_{\m{c}^*}(v) = \D_{\m{wh^*}} = 0$ for every $v \in V$, then we can write $\D^*_{\m{c}} = - \D_{\m{c}^*}$ and $\D^*_{\m{wh^*}} = - \D_{\m{hw^*}}$.
\end{remark}

\section{The Structure of the Module of Outer Derivations}
In this section we give a presentation of the module of outer derivations (= the first Hochschild cohomology) of Leavitt path algebra.

\subsection{Calculations}

\begin{proposition}\label{beta-pi}
  Let $\Gamma = (V,E)$ be a row-finite graph and $L_K(\Gamma)$ be the Leavitt path algebra over a commutative ring $K$. Let $\{I_{\m{wh^*}}\}_{\m{wh}^*\in\m{B}}$ be a family of functionals determines an inner derivation $\mathscr{J} = \sum_{\m{wh^*} \in \mathfrak{B}}\alpha(\m{wh^*})\mathbf{ad}_{\m{wh^*}}$ of $L_K(\Gamma)$, where the $\alpha$'s are in $K$. Then for every edge $e\in E$, we have
  \begin{align}
    & I_{\m{p}}(e) = \delta_{p_z,e}\alpha(\m{p}\backslash p_z) -\delta_{p_0,e}\alpha(\m{p}/p_0)+\alpha(\m{p}e^*),  &\m{p} \in \m{P}, \label{I1}\\
    & I_{\m{p}^*}(e) = \alpha([e\m{p}]^*) - \delta_{e,\vartheta}\alpha([\m{p}e]^*), & \m{p} \in V \cup \m{P},\label{I2}\\
    & I_{f[\m{p}f]^*}(e) = \alpha(f[e\m{p}f]^*) - \delta_{e,f}\alpha([\m{p}f]^*) + \delta_{e,\vartheta}\alpha([\m{p}e]^*),& \m{p} \in V \cup \m{P},\label{I3}\\
    & I_{\m{wh^*}}(e) = \alpha(\m{w}[e\m{h}]^*) - \delta_{w_0,e}\alpha(\m{w}/w_0\m{h}^*), &\m{wh^*} \in \m{M}, \label{I4}
  \end{align}
  where $f\in E$, $s(f) = s(e)$, and if $\m{w}\in E$ then $\m{w} \ne h_z$.
  \end{proposition}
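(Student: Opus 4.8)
The plan is to compute directly how the family $\{I_{\m{wh^*}}\}$ must look when the derivation $\mathscr{J}$ is the inner derivation attached to $\sum_{\m{wh^*}}\alpha(\m{wh^*})\mathbf{ad}_{\m{wh^*}}$. By definition $\mathscr{J}(x) = \sum_{\m{wh^*}\in\m{B}}\alpha(\m{wh^*})\bigl(\m{wh^*}\cdot x - x\cdot \m{wh^*}\bigr)$, and since $\mathscr{J}$ is already in the form described by Theorem \ref{maintheorem}, the functionals $I_{\m{b}}$ are just the coordinates of $\mathscr{J}(e)$, $\mathscr{J}(e^*)$, $\mathscr{J}(v)$ in the basis $\m{B}$. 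So the whole argument reduces to expanding $\m{wh^*}\cdot e$ and $e\cdot \m{wh^*}$ for each basis element $\m{wh^*}$ using relations (1)--(4) of the Leavitt path algebra (equivalently, reducing modulo $\mathbf{GS}$), and reading off the coefficient of a fixed target basis word $\m{p}$, $\m{p}^*$, $f[\m{p}f]^*$, or $\m{wh^*}\in\m{M}$. Each of (\ref{I1})--(\ref{I4}) corresponds to fixing the shape of the target word and collecting all $\m{wh^*}$ that can produce it after one multiplication by $e$ on the right or the left.

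First I would treat (\ref{I1}): fix a path $\m{p} = p_0\cdots p_z \in \m{P}$ and ask which $\m{wh^*}$ contribute to the coefficient of $\m{p}$ in $\mathscr{J}(e)$. Right-multiplication $\m{wh^*}\cdot e$ yields $\m{p}$ in two ways — when $\m{wh^*}$ is itself a path $\m{q}$ with $\m{q}e = \m{p}$, forcing $\m{q} = \m{p}\backslash p_z$ and $e = p_z$ (the term $\delta_{p_z,e}\alpha(\m{p}\backslash p_z)$) — or when $\m{wh^*} = \m{p}e^*$, since $(\m{p}e^*)e = \m{p}r(e) = \m{p}$ using relation (3) (the term $\alpha(\m{p}e^*)$). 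Left-multiplication $e\cdot\m{wh^*}$ contributes when $e\m{q} = \m{p}$, i.e. $\m{q} = \m{p}/p_0$ and $e = p_0$, with a minus sign (the term $-\delta_{p_0,e}\alpha(\m{p}/p_0)$). One must also check that no other basis element — for instance a word $\m{wh^*}\in\m{M}$ — can collapse to the pure path $\m{p}$ after multiplying by a single edge; this is where relation (3) $e^*f = \delta_{e,f}r(e)$ and the collapse relation (4) are used to rule out spurious contributions. The remaining items (\ref{I2}), (\ref{I3}), (\ref{I4}) are the same bookkeeping for the other three shapes of basis word: (\ref{I2}) and (\ref{I3}) together handle the coefficient of $\m{p}^*$-type and $f[\m{p}f]^*$-type words in $\mathscr{J}(e)$, where the subtlety is the special-edge function $\vartheta$ — multiplying $e^*$-words by $e$ and then invoking relation (4) to re-express $ee^*$ in terms of $s(e)$ and the non-special $ff^*$ produces exactly the $\delta_{e,\vartheta_e}$ corrections; (\ref{I4}) handles the generic $\m{wh^*}\in\m{M}$ target, which is the cleanest case since no collapse occurs.

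The main obstacle I expect is the careful handling of the collapse relation (4), i.e. $ee^* = s(e) - \sum_{r\ge 2} e_re_r^*$ for the special edge: this is what splits the $e^*$-computations into the two cases recorded in (\ref{I2}) and (\ref{I3}) and is the source of every $\vartheta$-dependent term. Concretely, when we right-multiply a word ending in $e^*$ by $e$, or left-multiply a word by $e$ where the word begins with $e^*$, we produce a factor $ee^*$ that is \emph{not} a basis element and must be rewritten; tracking which rewritten pieces land back on words of the form $f[\m{p}f]^*$ versus plain $\m{p}^*$, and with which sign, is the delicate part. Everything else is a routine but lengthy case check that each product $\m{wh^*}\cdot e$ and $e\cdot\m{wh^*}$ reduces to a $K$-linear combination of basis words via $\mathbf{GS}$, and that collecting coefficients of each fixed target reproduces the four displayed formulas; I would organize this by the position of $e$ relative to the "turning point" of $\m{wh^*}$ (inside $\m{w}$, inside $\m{h}^*$, or at the junction), exactly as in the proof of Theorem \ref{GSBLev}.
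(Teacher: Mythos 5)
Your plan is exactly the paper's proof: expand $\mathscr{J}(e)=\sum\alpha(\m{wh^*})(\m{wh^*}e-e\m{wh^*})$, reduce each product via the relations (in particular using relation (4) for the special edge to rewrite $ee^*$, which is the sole source of the $\delta_{e,\vartheta_e}$ terms), and read off the coefficient of each fixed basis word of the four shapes $\m{p}$, $\m{p}^*$, $f[\m{p}f]^*$, $\m{wh^*}\in\m{M}$. The paper's own proof is just this computation carried out and the terms "added up", so the proposal is correct and takes essentially the same route.
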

\begin{proof}
We have
\begin{eqnarray*}
  \mathscr{I}(e) &=& \sum_{\m{wh^*} \in \m{B}}\alpha(\m{wh^*})\mathbf{ad}_{\m{wh^*}}(e)  = \sum_{v\in V} \alpha(v) \Bigl(ve -ev\Bigr) \\
  && + \sum_{p\in \m{P}}\Bigl( \alpha(\m{p})\bigl(\m{p}e -e\m{p}\bigr) + \alpha(\m{p}^*)\bigl(\m{p}^*e - e \m{p}^*\bigr)\Bigr)\\
  && + \sum_{\m{wh^*}\in\m{M}}\alpha(\m{wh^*})\bigl(\m{wh^*}e - e\m{wh^*}\bigr).
\end{eqnarray*}

Adding up similar terms, we get:
\begin{eqnarray*}
\mathscr{J}(e)&=& \sum_{\m{p} \in \m{P}} \Bigl( \delta_{p_z,e}\alpha(\m{p}\backslash p_z) - \delta_{p_0,e}\alpha(\m{p}/p_0) + \alpha(\m{p}e^*) \Bigr)\m{p}\\
   && + \sum_{\m{p} \in \m{P}\cup V}\Bigl(\alpha([e\m{p}]^*)-\delta_{\vartheta,e}\alpha([\m{p}e]^*)\Bigr)\m{p}^*\\
  && + \sum_{\substack{f \in E \\ s(f) = s(e)}}\Bigl(\alpha(f[e\m{p}f]^*) - \delta_{e,f}\alpha([\m{p}f]^*) + \delta_{e,\vartheta}\alpha([\m{p}e]^*)\Bigr)f[\m{p}f]^*\\
  && +\sum_{\substack{\m{wh^*}\in \m{M} \\ \m{w} \notin E}}\bigl(\alpha(\m{w}[e\m{h}]^*) - \delta_{w_0,e}\alpha(\m{w}/w_0\m{h}^*)\bigr)\m{wh^*} + \sum_{\substack{ \m{w} \ne h_z}}\alpha(\m{w}[e\m{h}]^*)\m{wh}^*.
\end{eqnarray*}
This completes the proof.
\end{proof}

\begin{corollary}
  Let $\{I_\m{wh^*}\}_{\m{wh^*} \in \m{B}}$ be as in Proposition \ref{beta-pi}. For every edge $e\in E$, we have
    \begin{align}
    & I_{\m{p}^*}(e^*) = -\delta_{p_z,e}\alpha([\m{p}\backslash p_z]^*) +\delta_{p_0,e}\alpha([\m{p}/p_0]^*)-\alpha(e\m{p}^*),  &\m{p} \in \m{P}, \label{I1*}\\
    & I_{\m{p}}(e^*) =- \alpha(e\m{p}) + \delta_{e,\vartheta}\alpha(\m{p}e), & \m{p} \in V \cup \m{P},\label{I2*}\\
    & I_{\m{p}ff^*}(e^*) = -\alpha(e\m{p}ff^*) + \delta_{e,f}\alpha(\m{p}f) - \delta_{e,\vartheta}\alpha(\m{p}e),& \m{p} \in V \cup \m{P}, \label{I3*}\\
    & I_{\m{hw^*}}(e^*) = -\alpha(e\m{h}\m{w}^*)+ \delta_{w_0,e}\alpha(\m{h}[\m{w}/w_0]^*), &\m{hw^*} \in \m{M},\label{I4*}
  \end{align}
 where $f\in E$, $s(f) = s(e)$, and if $\m{w}\in E$ then $\m{w} \ne h_z$.
  \end{corollary}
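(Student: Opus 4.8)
The plan is to derive (\ref{I1*})--(\ref{I4*}) from Proposition \ref{beta-pi} by transporting those formulas along the canonical anti-automorphism of $L_K(\Gamma)$, rather than repeating the term-by-term expansion of $\mathscr{J}(e^*)$.

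First I would record that the assignment $v\mapsto v$ ($v\in V$), $e\mapsto e^*$, $e^*\mapsto e$ ($e\in E$) extends to an involutive $K$-algebra anti-automorphism ${*}\colon L_K(\Gamma)\to L_K(\Gamma)$ --- the one already used in Corollary \ref{*cor}. Well-definedness is a direct check that each of the defining relations (1)--(4) is carried to another defining relation (using $\delta_{v,u}=\delta_{u,v}$ for (1), and $(ee^*)^*=ee^*$, $s(e)^*=s(e)$ for the $*$-invariance of (4)). Since $*$ permutes the generators it maps the Gr\"obner--Shirshov basis $\mathbf{GS}$ into itself, hence it permutes the $K$-basis $\m{B}$ of Theorem \ref{GSBLev}: a path goes to its reversed starred path, a starred path to a path, a vertex to itself, and a mixed word $\m{wh^*}\in\m{M}$ to $\m{hw^*}\in\m{M}$, because the condition on the terminal edges $w_z,h_z$ in Theorem \ref{GSBLev} is symmetric under $\m{w}\leftrightarrow\m{h}$. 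Consequently $\left(\sum_{\m{b}\in\m{B}}c_{\m{b}}\m{b}\right)^{*}=\sum_{\m{b}\in\m{B}}c_{\m{b}}\m{b}^{*}$, so applying $*$ merely relabels a basis expansion by $\m{b}\mapsto\m{b}^{*}$.

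Next I would use the identity $(\mathbf{ad}_\lambda(a))^{*}=-\mathbf{ad}_{\lambda^{*}}(a^{*})$, valid since $*$ is an anti-automorphism. Taking $\lambda=\m{wh^*}$ (so $\lambda^{*}=\m{hw^*}$) and summing against the $\alpha$'s gives $\mathscr{J}(e^*)^{*}=-\mathscr{J}'(e)$, where $\mathscr{J}'=\sum_{\m{wh^*}\in\m{B}}\alpha(\m{wh^*})\,\mathbf{ad}_{\m{hw^*}}$ is again an inner derivation, with coefficient function $\alpha'$ determined by $\alpha'(\m{b})=\alpha(\m{b}^{*})$. Writing $\mathscr{J}(e^*)=\sum_{\m{b}}I_{\m{b}}(e^*)\m{b}$ and $\mathscr{J}'(e)=\sum_{\m{b}}I'_{\m{b}}(e)\m{b}$ and comparing coefficients (via the relabelling above) yields, for every $\m{b}\in\m{B}$, the identity $I_{\m{b}}(e^*)=-I'_{\m{b}^{*}}(e)$, where $\{I'_{\m{b}}\}$ is the functional family attached to $\mathscr{J}'$ --- exactly the object to which Proposition \ref{beta-pi} applies, with $(\alpha,I)$ replaced by $(\alpha',I')$.

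It then remains to substitute. Feeding (\ref{I1})--(\ref{I4}) for $\mathscr{J}'$ into $I_{\m{b}}(e^*)=-I'_{\m{b}^{*}}(e)$ and rewriting $\alpha'$ in terms of $\alpha$ gives the four cases: $(\m{p}^{*})^{*}=\m{p}$ together with (\ref{I1}) and $\alpha'([\m{p}\backslash p_z]^{*})=\alpha(\m{p}\backslash p_z)$, $\alpha'([\m{p}/p_0]^{*})=\alpha(\m{p}/p_0)$, $\alpha'(\m{p}e^*)=\alpha(e\m{p}^{*})$ produces (\ref{I1*}); similarly $I_{\m{p}}(e^*)=-I'_{\m{p}^{*}}(e)$ with (\ref{I2}) gives (\ref{I2*}), the identification $(\m{p}ff^*)^{*}=f[\m{p}f]^{*}$ with (\ref{I3}) gives (\ref{I3*}), and $(\m{hw^*})^{*}=\m{wh^*}$ with (\ref{I4}) gives (\ref{I4*}). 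The only place that needs real care --- and the step I expect to be the genuine (if modest) obstacle --- is the bookkeeping of $*$ on the mixed monomials: correctly recognizing $(\m{p}ff^*)^{*}$ as an element of the form $f[\m{p}f]^{*}\in\m{M}$ and tracking the $\vartheta_e$-indexed terms, since those carry the only asymmetry (arising from the choice of special edges) and are where a sign or a Kronecker delta could slip. Alternatively, one can sidestep $*$ and simply expand $\mathscr{J}(e^*)=\sum_{\m{wh^*}}\alpha(\m{wh^*})\bigl(\m{wh^*}e^*-e^*\m{wh^*}\bigr)$ term by term, exactly as in the proof of Proposition \ref{beta-pi}; the duality argument above is just a way of organizing that computation.
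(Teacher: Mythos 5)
Your proposal is correct and follows essentially the same route as the paper: the paper's own proof consists precisely of the observation $(\mathbf{ad}_{\m{wh^*}}(e))^* = -\mathbf{ad}_{\m{hw^*}}(e^*)$, hence $(\mathscr{J}(e))^* = -\mathscr{J}(e^*)$, followed by an appeal to Proposition \ref{beta-pi}. You have merely spelled out the relabelling $\alpha'(\m{b})=\alpha(\m{b}^*)$, the identity $I_{\m{b}}(e^*)=-I'_{\m{b}^*}(e)$, and the bookkeeping $(\m{p}ff^*)^* = f[\m{p}f]^*$ that the paper leaves implicit.
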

\begin{proof}
By Remark \ref{*cor}, $(\mathbf{ad}_\m{wh^*}(e))^* = (\m{wh^*}e - e\m{wh^*})^* = e^*\m{hw^*} - \m{hw^*}e^* = -\mathbf{ad}_{\m{hw^*}}(e^*)$, so that $(\mathscr{J}(e))^* = (\sum_{\m{wh^*} \in \m{B}\setminus V} \alpha(\m{wh^*}) \mathbf{ad}_{\m{wh^*}}(e) )^* = - \mathscr{J}(e^*)$, and, by Proposition \ref{beta-pi}, the statement follows.
\end{proof}

\subsubsection{Derivations $\D_\m{c}$, $\D_{\m{c^*}}$.}~\\

We note that if $\m{c} = c_0 c_1 \cdots c_{\ell}$ is a directed cycle of a row-finite graph $\Gamma$ then we have the action of $\mathbb{Z} = \langle \zeta \rangle$ on $\m{c}$ by rotations: $\zeta^0 \m{c} = \m{c}$, $\zeta^1 \m{c} = c_1\cdots c_\ell c_0$, $\ldots$, $\zeta^\ell \m{c} = c_\ell c_0 \cdots c_{\ell-1}$, $\zeta^{\ell +1}\m{c} = \m{c}$, \textit{etc.} Set $c_{-1} = c_\ell$, $c_{-2} = c_{\ell-1}$, \textit{etc.}

\begin{corollary}\label{sysforc}
  Let $\{I_{\m{wh^*}}\}_{\m{wh^*} \in \m{B}}$ be as in Proposition \ref{beta-pi} and let $\m{c} = c_0\cdots c_\ell$ be a directed cycle of the row-finite graph $\Gamma$. For every $0 \le i \le \ell$, we have
  \[
   \begin{cases}
     \alpha(\zeta^i\m{c}) - \delta_{c_{i},e}\alpha(\zeta^{i+1}\m{c}) + \alpha(\zeta^i\m{c}ee^*) = I_{\zeta^i\m{c}e}(e), \\
    - \alpha(\zeta^i\m{c}) + \alpha(e\zeta^i\m{c}e^*) = I_{e\zeta^i\m{c}}(e), & e \ne c_{\ell-i}\\
     -\alpha(\zeta^i\m{c})+\delta_{c_i,\vartheta}\alpha(\zeta^{i+1}\m{c}) = I_{\zeta^i\m{c}/c_{i}}(c_{i}^*),\\
     -\alpha(\zeta^i\m{c}) - \alpha(\zeta^{\ell+i}\m{c}ee^*) = I_{\zeta^i\m{c}\backslash c_{i-1}ee^*}(c_{i-1}^*) , & \delta_{c_{i-1},\vartheta} = 1,\\
      \alpha (\m{p} \zeta^i\m{c} \m{p}^*) - \alpha(\m{p}/p_0\zeta^i\m{c}[\m{p}/p_0]^*) = I_{\m{p}\zeta^i\m{c}[\m{p}/p_0]^*}(p_0), & p_z \ne c_{\ell - i},\\
      \alpha(\m{p}\zeta^i \m{c} e[\m{p}e]^*) - \alpha(\m{p}/p_0\zeta^i \m{c} e[\m{p}/p_0e]^*) = I_{\m{p}\zeta^i\m{c}e[\m{p}/p_0e]^*}(p_0),
   \end{cases}
  \]
here $e \in E$, $\m{p}\in \m{P}$.
\end{corollary}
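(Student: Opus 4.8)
The plan is to specialize the general formulas of Proposition~\ref{beta-pi} and its corollary to the particular basis elements associated with a directed cycle $\m{c} = c_0\cdots c_\ell$, and to package the results in the stated matrix form. First I would record the elementary combinatorial observation that underlies everything: for a directed cycle $\m{c}$ the rotation $\zeta$ cyclically permutes the constituent edges, so that $\zeta^i\m{c}$ is again a path with $s(\zeta^i\m{c}) = s(c_i) = r(c_{i-1})$ and $r(\zeta^i\m{c}) = r(\zeta^i\m{c})$; in particular each $\zeta^i\m{c}$ is a path that starts and ends at the same vertex, so $\zeta^i\m{c} \in \m{P}$, and moreover $\zeta^{i+1}\m{c} = \zeta^i\m{c}/c_i$ and, using the convention $c_{i-1} = c_{\ell+i}$ in the cyclic indexing, $\zeta^i\m{c}\backslash c_{i-1} = \zeta^{\ell+i}\m{c}/c_{(\ell+i)}$ as well. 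These identifications are exactly what lets one rewrite the path-dependent terms $\m{p}\backslash p_z$, $\m{p}/p_0$ appearing in \eqref{I1}--\eqref{I4} in terms of powers of $\zeta$.

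Next, for each of the six lines of the claimed system I would substitute the appropriate choice of basis element into one of the identities \eqref{I1}, \eqref{I2}, \eqref{I3}, \eqref{I1*}, \eqref{I2*}, \eqref{I3*}, \eqref{I4}, \eqref{I4*}. Concretely: line~1 comes from \eqref{I1} with the path $\m{p} = \zeta^i\m{c}\,e$ (so $p_z = e$, $p_0 = c_i$, $\m{p}\backslash p_z = \zeta^i\m{c}$, $\m{p}/p_0 = \zeta^{i+1}\m{c}\,e$ — one must check the Kronecker delta on $p_0$ collapses correctly, giving $\delta_{c_i,e}\alpha(\zeta^{i+1}\m{c})$ after the $ee^*$-term is absorbed); line~2 from \eqref{I2*} with $\m{p} = \zeta^i\m{c}$ and the side condition $e \ne c_{\ell-i}$ that kills the $\delta_{e,\vartheta_e}$-term (equivalently $e$ is not the last edge of $\zeta^i\m{c}$, which is $c_{\ell+i} = c_{i-1}$; here one needs to be careful whether the stated condition $e\ne c_{\ell-i}$ matches $e \ne r$-last-edge, and I would reconcile the indexing); line~3 from \eqref{I2} (or its starred form) applied with $\m{p} = \zeta^i\m{c}/c_i = \zeta^{i+1}\m{c}$ read off at the edge $c_i$; line~4 from \eqref{I3} or \eqref{I3*} in the case where the relevant edge $c_{i-1}$ is the special edge $\vartheta(s(c_{i-1}))$, so that the $\delta_{e,\vartheta_e}$-term survives and contributes the $-\alpha(\zeta^{\ell+i}\m{c}ee^*)$ term; and lines~5 and~6 from \eqref{I4}/\eqref{I4*} applied to the monomials $\m{p}\zeta^i\m{c}\m{p}^*$ and $\m{p}\zeta^i\m{c}e[\m{p}e]^*$ respectively, which lie in $\m{M}$ once one notes that inserting the cycle $\zeta^i\m{c}$ in the middle does not change the terminal-edge condition defining $\m{M}$. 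Each substitution is a direct bookkeeping exercise once the dictionary of the first paragraph is in place.

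The one genuine subtlety — and the step I expect to be the main obstacle — is the index arithmetic around the special-edge conditions and the cyclic relabelling $c_{-1} = c_\ell$ etc.: one has to be sure that "$c_{i-1}$ is special" in line~4 and "$e \ne c_{\ell-i}$" in line~2, "$p_z \ne c_{\ell-i}$" in line~5 are precisely the instantiations of the generic hypotheses "$\m{w}\ne h_z$", "$e\ne\vartheta_e$" from Proposition~\ref{beta-pi}, and that no term is silently dropped when a $\delta$ collapses because two cyclically-indexed edges coincide (which can genuinely happen, e.g. for a loop where $\ell = 0$). I would therefore treat the degenerate case of a loop ($\m{c}$ a single edge) separately or at least remark that the formulas persist there with $\zeta = \mathrm{id}$. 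Apart from that, the proof is simply: apply Proposition~\ref{beta-pi} and the two corollaries above, line by line, and collect terms; I would present it as "This is obtained by evaluating the formulas of Proposition~\ref{beta-pi} and the preceding corollaries at the basis elements $\zeta^i\m{c}\,e$, $\zeta^i\m{c}$, $\zeta^{i+1}\m{c}$, $\zeta^i\m{c}\backslash c_{i-1}\,ee^*$, $\m{p}\zeta^i\m{c}[\m{p}/p_0]^*$ and $\m{p}\zeta^i\m{c}e[\m{p}/p_0e]^*$, respectively, and using the rotation identities $\zeta^i\m{c}/c_i = \zeta^{i+1}\m{c}$."
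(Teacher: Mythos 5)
Your proposal is correct and is essentially the paper's own proof, which consists of exactly this observation: the six lines are the formulas (\ref{I1}), (\ref{I2*}), (\ref{I3*}) and (\ref{I4}) evaluated at the basis elements $\zeta^i\m{c}e$, $e\zeta^i\m{c}$, $\zeta^i\m{c}/c_i$, $\zeta^i\m{c}\backslash c_{i-1}ee^*$, $\m{p}\zeta^i\m{c}[\m{p}/p_0]^*$ and $\m{p}\zeta^i\m{c}e[\m{p}/p_0e]^*$, using the rotation identities you state. The only quibble is your attribution of line~2: since it evaluates $I_{e\zeta^i\m{c}}$ at the unstarred edge $e$, it comes from (\ref{I1}) with $\m{p}=e\zeta^i\m{c}$ (the side condition kills the $\delta_{p_z,e}$-term, not a $\delta_{e,\vartheta_e}$-term), rather than from (\ref{I2*}); this does not affect the argument.
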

\begin{proof}
It immediately follows from (\ref{I1}), (\ref{I2*}), (\ref{I3*}), and (\ref{I4}).
\end{proof}

\begin{remark}\label{S*}
For every directed cycle $\m{c} = c_0\cdots c_\ell \in \Gamma$, where $c_0,\ldots,c_\ell \in E$ and for every $\m{wh^*} \in \m{M}$ with $s(\m{w}) = s(\m{h})$, let us consider the following series

\begin{align*}
 &  \mathsf{S}_\m{c}= \sum_{\m{p} \in \m{P} \cup V}\mathbf{ad}_{\m{pcp^*}}+  \sum_{i=1}^k \sum_{\substack{p\in \m{P}\cup V\\ e \in E\setminus\{c_i \} } } \delta_{c_i,\vartheta} \cdots \delta_{c_k,\vartheta} \Bigl(\mathbf{ad}_{\m{p}\zeta^i\m{c}\m{p}^*} - \mathbf{ad}_{\m{p}\zeta^i\m{c}e[\m{p}e]^*}\Bigr),\\
 & \mathsf{S}_{\m{wh}^*} = \sum_{\m{p} \in \m{P}\cup V}\mathbf{ad}_{\m{pw}[\m{ph}]^*},
\end{align*}
here $k +1: = \min_{1\le j \le \ell+1} \{j: \zeta^j\m{c} = \m{c}\}$.

It may be verified that, for every $x\in V$, $\mathsf{S}_\m{c}(x) = \mathsf{S}_{\m{wh^*}}(x) =0$. Thus, by Remark \ref{*cor}, the equality $(\mathsf{S}_\m{c}(x))^*: = - \mathsf{S}_\m{c^*}(x^*)$ and $(\mathsf{S}_\m{wh^*}(x))^*: = - \mathsf{S}_\m{hw^*}(x^*)$ are well defined, for every $x \in L_K(\Gamma)$.
\end{remark}

\begin{lemma}\label{Dc=ad}
 Let $\Gamma = (V,E)$ be a row-finite graph and $L_K(\Gamma)$ be the Leavitt path algebra over a commutative ring $K$. Let $\m{c} = c_0 \cdots c_\ell \in \Gamma$ be a directed cycle, where $c_0,\ldots,c_\ell \in E$. Assume that $\D_\m{c}(x) = \sum_{\m{wh^*}\in \m{B}}\alpha(\m{wh^*})\mathbf{ad}_\m{wh^*}(x)$ for every $x\in L_K(\Gamma)$, then $\D_{\m{c}}(x) = \mathsf{S}_\m{c}(x)$ for every $x\in L_K(\Gamma)$ if at least one of the edges $c_0,\cdots, c_\ell$ is not special, and where $k +1: = \min_{1\le j \le \ell+1} \{j: \zeta^j\m{c} = \m{c}\}$.
\end{lemma}
\begin{proof}
From Corollary \ref{genofDer}, we already know that the family $\{F_{\m{c}e}(e) = 1, F_{c_1\cdots c_\ell}(c_0^*) = -1\}_{e\in E}$ of functionals determines the derivation $\D_\m{c}$. Let us assume that $\D_\m{c}(x) = \sum_{\m{wh^*}\in \m{B}}\alpha(\m{wh^*})\mathbf{ad}_\m{wh^*}(x)$ for every $x\in L_K(\Gamma)$. By Corollary \ref{sysforc},
  \[
   \begin{cases}
     \alpha(\zeta^i\m{c}) - \delta_{c_{i},e}\alpha(\zeta^{i+1}\m{c}) + \alpha(\zeta^i\m{c}ee^*) = F_{\zeta^i\m{c}e}(e), \\
    - \alpha(\zeta^i\m{c}) + \alpha(e\zeta^i\m{c}e^*) = F_{e\zeta^i\m{c}}(e), & e \ne c_{\ell-i}\\
     -\alpha(\zeta^i\m{c})+\delta_{c_i,\vartheta}\alpha(\zeta^{i+1}\m{c}) = F_{\zeta^i\m{c}/c_{i}}(c_{i}^*),\\
     -\alpha(\zeta^i\m{c}) - \alpha(\zeta^{\ell+i}\m{c}ee^*) = F_{\zeta^i\m{c}\backslash c_{i-1}ee^*}(c_{i-1}^*) , & \delta_{c_{i-1},\vartheta} = 1, \\
      \alpha (\m{p} \zeta^i\m{c} \m{p}^*) - \alpha(\m{p}/p_0\zeta^i\m{c}[\m{p}/p_0]^*) = F_{\m{p}\zeta^i\m{c}[\m{p}/p_0]^*}(p_0), & p_z \ne c_{\ell - i},\\
      \alpha(\m{p}\zeta^i \m{c} e[\m{p}e]^*) - \alpha(\m{p}/p_0\zeta^i \m{c} e[\m{p}/p_0e]^*) = F_{\m{p}\zeta^i\m{c}e[\m{p}/p_0e]^*}(p_0),
   \end{cases}
  \]
here $0 \le i \le \ell$ and $e \in E$, $\m{p}\in \m{P}$, so that
  \[
   \begin{cases}
     \alpha(\zeta^i\m{c}) - \delta_{c_{i},e}\alpha(\zeta^{i+1}\m{c}) + \alpha(\zeta^i\m{c}ee^*) = \delta_{i,0}, \\
    - \alpha(\zeta^i\m{c}) + \alpha(e\zeta^i\m{c}e^*) =0, & e \ne c_{k-i}\\
     -\alpha(\zeta^i\m{c})+\delta_{c_i,\vartheta_i}\alpha(\zeta^{i+1}\m{c}) = -\delta_{i,0},\\
     -\alpha(\zeta^i\m{c}) - \alpha(\zeta^{k+i}\m{c}ee^*) = 0 , & \delta_{c_{i-1},\vartheta} = 1,\\
      \alpha (\m{p} \zeta^i\m{c} \m{p}^*)= \alpha(\zeta^i\m{c}), & p_z \ne c_{k - i},\\
      \alpha(\m{p}\zeta^i \m{c} e[\m{p}e]^*) = \alpha(\zeta^i \m{c} ee^*),
   \end{cases}
  \]
where $k+1= \min_{1 \le j \le \ell+1}\{j: \zeta^j\m{c} = \m{c} \}.$

It is easy to see that from the equations $-\alpha(\zeta^i\m{c})+\delta_{c_i,\vartheta}\alpha(\zeta^{i+1}\m{c}) = -\delta_{i,0}$, $0 \le i \le k$ it follows that
\begin{align*}
&\alpha(\m{c}) = 1 + \delta_{c_0,\vartheta}\alpha(\zeta \m{c}) \\
&\alpha(\zeta \m{c}) = \delta_{c_1,\vartheta} \alpha(\zeta^2\m{c}) = \ldots = \delta_{c_0,\vartheta}\cdots \delta_{c_k,\vartheta}\alpha(\m{c}),
\end{align*}
then $\alpha(\m{c}) = 1 + \delta_{c_0,\vartheta}\cdots \delta_{c_k,\vartheta}\alpha(\m{c})$. It implies that if all edges $c_0, \ldots,c_k$ are special we then get a contradiction (namely $1=0$). Assume that at least one of the edges $c_0,\ldots,c_k$ is not special. Then $\alpha(\m{c}) = 1$ and $\alpha(\zeta^i\m{c}) = \delta_{c_i,\vartheta}\cdots \delta_{c_k,\vartheta}$ for $1 \le i \le k$.

Let $i \ne 1$ and the edge $c_{i-1}$ be special. Consider the equation $\alpha(\zeta^i\m{c}) + \alpha(\zeta^{k+i}\m{c}ee^*) = 0$. We can write $ \delta_{c_i,\vartheta}\cdots \delta_{c_k,\vartheta} + \alpha(\zeta^{i-1}\m{c}ee^*)=0$, because $\zeta^{k+i}\m{c} = \zeta^{k+1+i-1}\m{c} = \zeta^{i-1}\m{c}$. Since $\delta_{c_{i-1},\vartheta} =1$, $\alpha(\zeta^{i-1}\m{c}) = \delta_{c_{i-1},\vartheta }\cdots \delta_{c_k,\vartheta }$, then the preceding equation gives $\alpha(\zeta^{i-1}\m{c}) + \alpha(\zeta^{i-1}\m{c}ee^*) =0.$ Assume that $c_0$ is special. Then $\alpha(\zeta^1\m{c}) = \delta_{c_1,\vartheta} \cdots \delta_{c_k,\vartheta } = 0$. Consider the equation $\alpha(\zeta^1\m{c}) + \alpha(\m{c}ee^*) = 0$. It follows that $\alpha(\m{c}ee^*) = 0$, \textit{i.e.,} $1 + \alpha(\m{c}ee^*) = 1$. We have thus shown that every equation $\alpha(\zeta^i\m{c}) + \alpha(\zeta^{k+i}\m{c}ee^*) = 0$ is equivalent to the equation $\alpha(\zeta^i\m{c}) - \delta_{c_{i},e}\alpha(\zeta^{i+1}\m{c}) + \alpha(\zeta^i\m{c}ee^*) = \delta_{i,0}$ for every $0 \le i \le k$.

Let us consider the equations $\alpha(\zeta^i\m{c}) - \delta_{c_{i},e}\alpha(\zeta^{i+1}\m{c}) + \alpha(\zeta^i\m{c}ee^*) = \delta_{i,0}$, $0 \le i \le k$. We have $\alpha(\m{c}ee^*) = 0$, for every $e \in E$, and $\alpha(\zeta^i\m{c}c_ic_i^*) = \alpha(\zeta^{i+1}\m{c}) - \alpha(\zeta^i\m{c})$ for every $1 \le i \le k$. Let the edge $c_i$ be special, then we have to put $\alpha(\zeta^i\m{c}c_ic_i^*) := 0$. Thus
\[
\alpha(\zeta^{i+1}\m{c}) - \alpha(\zeta^i\m{c}) = \delta_{c_{i+1},\vartheta }\cdots \delta_{c_k,\vartheta } - \delta_{c_{i+1},\vartheta }\cdots \delta_{c_k,\vartheta } =0,
\]
because $\delta_{c_{i},\vartheta } = 1$. Assume that the edge $c_i$ is not special. Then $\alpha(\zeta^i\m{c})=\delta_{c_{i},\vartheta}\cdots \delta_{c_k,\vartheta} = 0$, hence $\alpha(c_i\zeta^{i+1}\m{c}c_i^*) = \alpha(\zeta^{i+1}\m{c})$, because $\zeta^i\m{c}c_i = c_i \zeta^{i+1}\m{c}$. Thus, we obtain
\begin{align*}
  &\alpha(\m{pcp}^*)= \alpha(\m{c}) =1,\\
  & \alpha(\m{p}\zeta^i\m{c}\m{p}^*) =\alpha(\zeta^i\m{c})= \delta_{c_i,\vartheta} \cdots \delta_{c_k,\vartheta},\\
  & \alpha(\m{p}\zeta^i\m{c}e[\m{p}e]^*) = - \delta_{c_i,\vartheta} \cdots \delta_{c_k,\vartheta}, & e \ne c_{i-1},\\
  &\delta_{c_0,\vartheta} \cdots \delta_{c_k,\vartheta} = 0,
\end{align*}
for $1 \le i \le k$. This completes the proof.
\end{proof}

\begin{corollary}\label{Dc-outer}
  For every directed cycle $\m{c} \in \Gamma$, the derivations $\D_{\m{c}}$, $\D_{\m{c^*}}$ are outer.
\end{corollary}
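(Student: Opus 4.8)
The plan is to show that neither $\D_{\m{c}}$ nor $\D_{\m{c}^*}$ can be written as a finite $K$-linear combination of inner derivations $\mathbf{ad}_{\m{wh}^*}$. By Corollary~\ref{*cor} it suffices to treat $\D_{\m{c}}$, since $(\D_{\m{c}}(a))^* = -\D_{\m{c}^*}(a^*)$ and the map $\mathbf{ad}_{\m{wh}^*}\mapsto\mathbf{ad}_{\m{hw}^*}$ is compatible with this $*$-operation; thus $\D_{\m{c}}$ is outer if and only if $\D_{\m{c}^*}$ is. So fix a directed cycle $\m{c} = c_0\cdots c_\ell$ and suppose, for contradiction, that $\D_{\m{c}}(x) = \sum_{\m{wh}^*\in\m{B}}\alpha(\m{wh}^*)\,\mathbf{ad}_{\m{wh}^*}(x)$ for all $x$, with only finitely many $\alpha(\m{wh}^*)$ nonzero.

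Now I invoke Lemma~\ref{Dc=ad} directly: it analyzes exactly this hypothesis. Its proof runs the system of Corollary~\ref{sysforc} with the right-hand sides replaced by the defining functionals $F_{\m{c}e}(e)=1$, $F_{c_1\cdots c_\ell}(c_0^*)=-1$ of $\D_{\m{c}}$, and it already establishes the key dichotomy: if \emph{every} edge $c_0,\ldots,c_k$ is special (where $k+1 = \min_{1\le j\le\ell}\{j:\zeta^j\m{c}=\m{c}\}$), the relation $\alpha(\m{c}) = 1 + \delta_{c_0,\vartheta_0}\cdots\delta_{c_k,\vartheta_k}\,\alpha(\m{c})$ collapses to $1=0$, a contradiction. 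Hence in that case $\D_{\m{c}}$ is \emph{not} inner, which is precisely what we want. In the remaining case --- at least one of $c_0,\ldots,c_k$ is not special --- Lemma~\ref{Dc=ad} exhibits the forced values $\alpha(\m{p}\zeta^i\m{c}\m{p}^*) = \delta_{c_i,\vartheta_i}\cdots\delta_{c_k,\vartheta_k}$ and $\alpha(\m{p}\m{c}\m{p}^*) = 1$ for \emph{every} $\m{p}\in\m{P}\cup V$ with $r(\m{p}) = s(\m{c})$. The contradiction now comes from finiteness: since $s(\m{c}) = r(\m{c})$, the cycle $\m{c}$ itself can be prepended to its own powers, and more importantly there are infinitely many distinct paths $\m{p}$ ending at $s(\m{c})$ (e.g. $\m{p} = \m{c}^n$ for all $n\ge 0$, or, if $s(\m{c})$ receives edges from outside, even more), so $\alpha(\m{p}\m{c}\m{p}^*) = 1\ne 0$ for infinitely many distinct basis elements $\m{p}\m{c}\m{p}^*\in\m{B}$. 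This contradicts the assumption that only finitely many $\alpha(\m{wh}^*)$ are nonzero.

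So the argument has two branches, both ending in a contradiction: the ``all special'' branch via $1=0$, and the ``some non-special'' branch via an infinite support. I would write it as: assume $\D_{\m{c}}$ inner, apply Lemma~\ref{Dc=ad} (whose proof already performs the case split), and read off the contradiction in each case; then deduce the statement for $\D_{\m{c}^*}$ from Corollary~\ref{*cor}. One should double-check that in the ``some non-special'' case the forced coefficient on $\m{c}^n\m{c}[\m{c}^n]^{*} = \m{c}^{n+1}[\m{c}^n]^*$ is indeed the value $1$ predicted by $\alpha(\m{p}\m{c}\m{p}^*)=1$ for all admissible $\m{p}$, i.e. that these words genuinely lie in $\m{B}$ and are pairwise distinct; this is where I expect the only real friction, and it is a routine bookkeeping check using the basis description in Theorem~\ref{GSBLev} (the terminal edge of $\m{c}^{n+1}$ and of $\m{c}^n$ agree and equal $c_\ell$, so the admissibility condition on $\m{wh}^*\in\m{M}$ must be verified, handling the borderline case where $c_\ell$ could itself be special separately --- but if $c_\ell$ is special one simply uses a different cyclic rotation $\zeta^i\m{c}$ whose last edge is not special, which exists by the standing hypothesis of this branch). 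Everything else is immediate from the quoted results.
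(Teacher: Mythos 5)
Your proposal is correct and follows essentially the same route as the paper: invoke Lemma~\ref{Dc=ad} to see that any expression of $\D_{\m{c}}$ as $\sum\alpha(\m{wh}^*)\mathbf{ad}_{\m{wh}^*}$ forces infinitely many nonzero coefficients $\alpha(\m{p}\zeta^i\m{c}\m{p}^*)$ (the ``infinite series''), and then transfer the conclusion to $\D_{\m{c}^*}$ via Corollary~\ref{*cor}. Your explicit treatment of the ``all edges special'' branch (the $1=0$ contradiction) and the bookkeeping check that the words $\m{p}\zeta^i\m{c}\m{p}^*$ are genuinely distinct basis elements are only slightly more detailed than what the paper records, not a different argument.
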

\begin{proof}
  It immediately follows from Lemma \ref{Dc=ad}, Remarks \ref{*cor}, \ref{S*}.
\end{proof}

\subsubsection{Derivations $\D_{\m{wh^*}}$}

\begin{lemma}\label{Dwh=ad}
  Let $\Gamma = (V,E)$ be a row-finite graph and $L_K(\Gamma)$ be the Leavitt path algebra over a commutative ring $K$. Let $w_0,\cdots,w_m,h_0,\ldots,h_n \in E$ such that $s(w_0) = s(h_0)$, $\m{wh^*} = w_0\cdots w_m [h_0\cdots h_n]^* \in \m{M}$ and $\m{wh^*} \ne w_0h_0^*$. If $\D_{\m{wh}^*}(x) = \sum_{\m{pq^*}\in \m{B}} \alpha(\m{pq^*}) \mathbf{ad}_{\m{pq^*}}(x)$ for every $x\in L_K(\Gamma)$, then $\D_{\m{wh^*}}(x) = \mathsf{S}_{\m{wh^*}} (x)$ for every $x\in L_K(\Gamma)$.
\end{lemma}
\begin{proof} We start with the following notation. Let $\m{p,q \in P}$. We write $\m{q} \not \succcurlyeq  \m{p}$ if $\m{q} \ne \m{q'p}$ for some $\m{q'} \in V \cup \m{P}.$

(1) Let $m \ge n$. By Corollary \ref{genofDer}, the following family $\{F_{\m{w}[\m{h}/h_0]^*}(h_0)=  1, F_{\m{w}/w_0\m{h}^*}(w_0^*) = -1  \}$ determines the derivation $\D_{\m{wh^*}}$. Let $\D_{\m{wh}^*}(x) = \sum_{\m{pq^*}\in \m{B}}\alpha(\m{pq^*})\mathbf{ad}_{\m{pq^*}}(x)$ for every $x\in L_K(\Gamma)$. By (\ref{I4}) and (\ref{I4*}), we obtain
\begin{align*}
     &F_{\m{w}[\m{h}/h_0]^*}(h_0) = \alpha(\m{wh^*}) - \delta_{w_0,h_0}\alpha(\m{w}/w_0[\m{h}/h_0]^*) = 1,\\
     &F_{\m{w}/w_0\m{h}^*}(w_0^*) = -\alpha(\m{wh^*}) + \delta_{w_0,h_0}\alpha(\m{w}/w_0[\m{h}/h_0]^*) =- 1,
\end{align*}
  and therefore $\alpha(\m{wh^*}) = 1 + \delta_{w_0,h_0}\alpha(\m{w}/w_0[\m{h}/h_0]^*)$. Using (\ref{I4}), we get $\alpha(\m{pw}[\m{ph}]^*) - \alpha(\m{p}/p_0 \m{w}[\m{p}/p_0\m{h}]^*) = F_{\m{pw}[\m{p}/p_0\m{h}]}(p_0)$. Hence \[
  \alpha(\m{pw}[\m{ph}]^*) = \alpha(\m{p}/p_0 \m{w}[\m{p}/p_0\m{h}]^*) = \cdots = \alpha(\m{wh^*}), \qquad \m{p} \in \m{P}.
  \]
From (\ref{I4}) it follows that
\begin{multline*}
  \alpha(w_i\cdots w_m[h_i \cdots h_n]^*) - \delta_{w_i,h_i}\alpha(w_{i+1}\cdots w_m[h_{i+1}\cdots h_n]^*) \\= F_{w_i\cdots w_m[h_{i+1}\cdots h_n]^*}(h_i)
\end{multline*}
and
\begin{multline*}
  \alpha(ew_i\cdots w_m[eh_i \cdots h_n]^*) - \alpha(w_i\cdots w_m[h_i \cdots h_n]^*) \\= F_{ew_i\cdots w_m[h_{i}\cdots h_n]^*}(e),
\end{multline*}
for $e\in E$, $1 \le i \le n-2$.  Therefore, we get $\alpha(w_i\cdots w_m[h_i\cdots h_n]^*) = \delta_{w_i,h_i}\alpha(w_{i+1}\cdots w_n[h_{i+1}\cdots h_n]^*)$. So, we obtain
\begin{align*}
  & \alpha(\m{w}/w_0[\m{h}/h_0]^*) = \delta_{w_1 \cdots w_{n-1},h_1 \cdots h_{n-1}} \alpha(w_n\cdots w_mh_n^*),\\
  & \alpha(\m{q}w_i\cdots w_m[\m{q}h_i \cdots h_n]^*)  =\alpha(w_i\cdots w_m[h_i \cdots h_n]^*),
\end{align*}
for every $1 \le i \le n-2$ and $\m{q \in P}$ such that $\m{q} \not \succcurlyeq w_0\cdots w_{i-1}$.

 Further, by (\ref{I1}), we have
\begin{eqnarray*}
F_{w_n\cdots w_m}(h_n) &=& \delta_{w_m,h_n}\alpha(w_n\cdots w_{m-1}) \\
 &&- \delta_{w_n,h_n}\alpha(w_{n+1}\cdots w_m)+ \alpha(w_n\cdots w_mh_n^*)\\
 &=& \delta_{n,0}.
\end{eqnarray*}

We note that if $w_0 = h_0, \ldots, w_n = h_n$, then $s(\m{w}_{n+1}) = r(\m{w}_{n+1})$, and if $w_0 = h_0, \ldots, w_{n-1} = h_{n-1}, w_m = h_n$, then $s(\m{w}_n) = r(\m{w}_n)$, here $\m{w}_{n+1} := w_{n+1}\cdots w_m$ and $\m{w}_n :=  w_n\cdots w_{m-1}$. Thus we can write
\begin{multline*}
 \alpha(\m{wh^*}) = 1 - \delta_{w_0\cdots w_n,\m{h}}\alpha(\m{w}_{n+1})+ \delta_{w_0,h_0}\cdots \delta_{w_{n-1},h_{n-1}}\delta_{w_m,h_n}\alpha(\m{w}_n),\\
   \alpha(w_{i+1}\cdots w_m[h_{i+1}\cdots h_n]^*) = \delta_{w_{i+1},h_{i+1}}\cdots \delta_{w_n,h_n}\alpha(\m{w}_{n+1})\\
  -\delta_{w_{i+1},h_{i+1}}\cdots \delta_{w_{n-1},h_{n-1}}\delta_{w_m,h_n}\alpha(\m{w}_n),
\end{multline*}
where $1 \le i \le m-1$.

Just for convinces, let us write $\zeta^i\m{w}_{n+1} = w_{n+i+1} \cdots w_{m+i}$, for $0\le i \le m-n-1$. Then, using (\ref{I2}), we get
\[
\delta_{w_{n+i+1}, \vartheta_{n+i+1}} \alpha(\zeta^{i+1}\m{w}_{n+1}) -\alpha(\zeta^i \m{w}_{n+1})= F_{\zeta^i\m{w}_{n+1}/w_{n+i+1}}(w_{n+i+1}^*) = 0,
\]
for $0 \le i \le m-n-1$. It follows that if at least one of the edge $w_{n+1},\ldots,w_m$ is not special then $\alpha(\zeta^{i}\m{w}_{n+1}) = 0 $ for all $0 \le i \le m-n-1$. Assume that all edges $w_m, \ldots, w_n$ are special. By Corollary \ref{sysforc}, we get
\[
   \begin{cases}
     \alpha(\zeta^i\m{w}_{n+1}) - \delta_{w_{n+i+1},e}\alpha(\zeta^{i+1}\m{w}_{n+1}) + \alpha(\zeta^i\m{w}_{n+1}ee^*) = F_{\zeta^i\m{w}_{n+1}e}(e) \\
      - \alpha(\zeta^i\m{w}_{n+1}) + \alpha(e\zeta^i\m{w}_{n+1}e^*) = F_{e\zeta^i\m{w}_{n+1}}(e), \qquad E \ni e \ne w_{n+i+1},\\
     -\alpha(\zeta^i\m{w}_{n+1})+\alpha(\zeta^{i+1}\m{w}_{n+1}) = F_{\zeta^i\m{w}_{n+1}/w_{n+i+1}}(w_{n+i+1}^*)\\
     -\alpha(\zeta^i\m{w}_{n+1}) - \alpha(\zeta^{m-n+1+i}\m{w}_{n+1}ee^*) =F_{[\zeta^i\m{w}_{n+1}\backslash w_{m+i}]ee^*}(w_{m+i}^*) \\
      \alpha (\m{p} \zeta^i\m{w}_{n+1} \m{p}^*) - \alpha(\m{p}/p_0\zeta^i\m{w}_{n+1}[\m{p}/p_0]^*) = F_{\m{p}\zeta^i\m{w}_{n+1}[\m{p}/p_0]^*}(p_0)\\
      \alpha(\m{p}\zeta^i \m{w}_{n+1} e[\m{p}e]^*) - \alpha(\m{p}/p_0\zeta^i \m{w}_{n+1} e[\m{p}/p_0e]^*) = F_{\m{p}\zeta^i\m{w}_{n+1}e[\m{p}/p_0e]^*}(p_0),
   \end{cases}
\]
here $0 \le i \le m-n-1$ and $e \in E$, $\m{p}\in \m{P}$. We have
\begin{align*}
   &F_{\zeta^i \m{w}_{n+1} e}(e) = \delta_{i,0}\delta_{n,0}\delta_{e,h_n}, \\
   &F_{e\zeta^i\m{w}_{n+1} } (e) = 0 , \quad \mbox{where $e \ne w_{n+i+1},$}\\
   &F_{\zeta^i\m{w}_{n+1}/w_{n+i+1}}(w_{n+i+1}^*) = 0,\\
   & F_{[\zeta^i\m{w}_{n+1}\backslash w_{m+i}]ee^*}(w_{m+i}^*) = 0,\\
   & F_{\m{p}\zeta^i\m{w}_{n+1}[\m{p}/p_0]^*}(p_0) = 0, \quad \mbox{whenever $\m{p}\zeta^i\m{w}_{n+1}[\m{p}/p_0]^* \ne \m{w[h/h_0]^*}$},\\
   & F_{\m{p}\zeta^i\m{w}_{n+1}e[\m{p}/p_0e]^*}(p_0) = 0,
\end{align*}
for every $0 \le i \le m-n-1$. Hence, we obtain
\begin{align*}
  & \alpha(\m{w}_{n+1}) = \alpha(\zeta \m{w}_{n+1}) = \cdots = \alpha(\zeta^{m-n-1}\m{w}_{n+1})\\
  & \alpha(\m{p} \zeta^i\m{w}_{n+1}e [\m{p}e]^*) =  \alpha(\zeta^i \m{w}_{n+1}ee^*) = - \alpha(\zeta^{m-n+i+i}\m{w}_{n+1}) \\
  & \alpha(\m{p}\zeta^i \m{w}_{n+1}\m{p}^*) = \alpha(\zeta^i \m{w}_{n+1}),
\end{align*}
in the last equation we have assumed that $\m{p}\zeta^i \m{w}_{n+1}\m{p}^* \ne \m{wh^*}$. Similarly, for the the directed cycle $\m{w}_n = w_n\cdots w_{m-1}$, one can get
\begin{align*}
  & \alpha(\m{w}_{n}) = \alpha(\zeta \m{w}_{n}) = \cdots = \alpha(\zeta^{m-n}\m{w}_{n})\\
  & \alpha(\m{p} \zeta^j\m{w}_{n}e [\m{p}e]^*) =  \alpha(\zeta^j \m{w}_{n}ee^*) = - \alpha(\zeta^{m-n-1+j}\m{w}_{n}) \\
  & \alpha(\m{p}\zeta^j \m{w}_{n}\m{p}^*) = \alpha(\zeta^j \m{w}_{n}),
\end{align*}
here $0 \le j \le m-n-1$, $e \in E$, $\m{p}\in \m{P}$, all edges $w_n, \ldots w_{m-1}$ are special, and $\m{p}\zeta^j \m{w}_{n}\m{p}^*\ne \m{wh^*}$ in the last equation.

Denote $\delta_{w_0\cdots w_n, \m{h}}\delta_{w_{n+1},\vartheta }\cdots \delta_{w_m,\vartheta }$ and $\delta_{w_0 \cdots w_{n-1},h_0 \cdots h_{n-1}} \delta_{w_m,h_n}\delta_{w_{n+1},\vartheta }\cdots \delta_{w_m,\vartheta }$ by $\Delta_{n+1}$ and $\Delta_n$, respectively. Then we get

\begin{equation}\label{sysforwh}
 \begin{cases}
   \alpha(\m{pw}[\m{ph}^*]) = 1+ \Delta_{n+1}\alpha - \Delta_{n}\beta,\\
   \delta_{w_0,h_0}\cdots \delta_{w_i,h_i}\alpha(\m{p}w_{i+1}\cdots w_m[\m{p}h_{i+1}\cdots h_n]^*), \\
   \phantom{AAAAAAAAAAAAAAAAAAA}= \Delta_{n+1}\alpha - \Delta_{n}\beta,\\
   \alpha(\m{p}\zeta^i\m{w}_{k}e[\m{p}e]^*) = - \Delta_{k}\alpha(\m{w}_{k}),\\
   \alpha(\zeta\m{w}_{k}) = \cdots = \alpha(\zeta^{n-m-1}\m{w}_{k}) = \Delta_{k}\alpha(\m{w}_{k}),\\
    \alpha(\m{p}\zeta^i\m{w}_{k}\m{p}^*) =  \Delta_{k}\alpha(\m{w}_{k}),
  \end{cases}
\end{equation}
here $ 0 \le i,j \le m-n-1$, $k\in \{n,n+1\}$, $\alpha,\beta \in K$, $\m{p} \in \m{P}$ and $\m{p}\zeta^i\m{w}_{k}\m{p}^* \ne \m{qw[qh]^*}$, $\m{p} \not \succcurlyeq \m{q}$, in the last equality. Thus we get

\begin{eqnarray*}
\D_\m{wh^*}(x)&=& \sum_{\m{p} \in \m{P}\cup V}\mathbf{ad}_{\m{pw}[\m{ph}]^*}(x)\\
  && + \Delta_{n+1} \alpha \sum_{i=0}^{m-n-1}\sum_{\m{p} \in \m{P}\cup V } \sum_{e \in E} \Bigl(\mathbf{ad}_{\m{p}\zeta^i\m{w}_{n+1}\m{p}^*}(x) - \mathbf{ad}_{\m{p}\zeta^{i}\m{w}_{n+1}e[\m{p}e]^*}(x) \Bigr)\\
  && +  \Delta_{n} \beta \sum_{i=0}^{m-n-1}\sum_{\m{p} \in \m{P} \cup V} \sum_{e\in E} \Bigl(\mathbf{ad}_{\m{p}\zeta^i\m{w}_{n}\m{p}^*}(x) - \mathbf{ad}_{\m{p}\zeta^{i}\m{w}_{n}e[\m{p}e]^*}(x) \Bigr).
\end{eqnarray*}

By the straightforward verification, it is easy to check that
\begin{align*}
  &\Delta_{n+1} \alpha \sum_{i=0}^{m-n-1}\sum_{\m{p} \in \m{P}\cup V } \sum_{e \in E} \Bigl(\mathbf{ad}_{\m{p}\zeta^i\m{w}_{n+1}\m{p}^*}(x) - \mathbf{ad}_{\m{p}\zeta^{i}\m{w}_{n+1}e[\m{p}e]^*}(x) \Bigr) = 0\\
  &\Delta_{n} \beta \sum_{i=0}^{m-n-1}\sum_{\m{p} \in \m{P} \cup V} \sum_{e\in E} \Bigl(\mathbf{ad}_{\m{p}\zeta^i\m{w}_{n}\m{p}^*}(x) - \mathbf{ad}_{\m{p}\zeta^{i}\m{w}_{n}e[\m{p}e]^*}(x) \Bigr) =0
\end{align*}
for every $x \in V\cup E\cup E^*$ and the statement follows.

(2) Let $m \le n$. By Remark \ref{*cor} and Remark \ref{S*}, $(\D_{\m{wh^*}}(x))^* = -\D_{\m{hw^*}}(x^*)$ and the statement follows.
\end{proof}

\begin{corollary}\label{Dwh=sum}
  Let $\m{wh^* \in M}$, $s(\m{w}) = s(\m{h})$, and there is a finite number of edges $e\in E$ such that $r(e) = s(\m{w})$. Then
  \[
  \D_{\m{wh^*}} \equiv \sum_{e\in E} \D_{e\m{w}[e\m{h}]^*} \bmod \mathbf{ad}(L_K(\Gamma)).
  \]
\end{corollary}
\begin{proof}
  By Lemma \ref{Dwh=ad}, $\D_{\m{wh^*}}(x) =\sum_{\m{p} \in \m{P} \cup V} \mathbf{ad}_{\m{pw[ph]^*}}(x)$ and $\D_{e\m{w}[e\m{h}]^*}(x) =\sum_{\m{p} \in \m{P} \cup V} \mathbf{ad}_{\m{p}e\m{w}[\m{p}e\m{h}]^*}(x)$ for every $x \in L_K(\Gamma)$, so that $\D_{\m{wh^*}} - \sum_{e\in E} \D_{e\m{w}[e\m{h}]^*} = \mathbf{ad}_{\m{wh^*}}$ and the statement follows.
\end{proof}

\begin{corollary}\label{Dc=Dkc}
Let $\m{c} = c_0 c_1 \cdots c_\ell$ be a directed cycle of $\Gamma$, $k+1: = \min_{1 \le i \le \ell+1}\{i: \zeta^i\m{c} = \m{c}\}$ and the edge $c_k$ is special. Then
\[
 \D_{\m{c}} \equiv \D_{\zeta^k\m{c}} - \sum_{e \in E} \D_{\zeta^k \m{c} ee^*} \bmod \mathbf{ad}(L_K(\Gamma)),
\]
if at least one of the edges $c_0,\ldots, c_k$ is not special and the set $\{ \m{p} \in \m{P}: r(\m{p}) = s(c_0),  p_z \ne c_k \}$ is finite.
\end{corollary}
\begin{proof}
By Lemma \ref{Dc=ad}, $\D_{\m{c}}(x) = \mathsf{S}_\m{c}(x)$ for every $x \in L_K(\Gamma)$, where
\[
\mathsf{S}_\m{c} =  \sum_{\m{p} \in \m{P} \cup V}\mathbf{ad}_{\m{pcp^*}}+  \sum_{i=1}^k \sum_{\substack{p\in \m{P}\cup V\\ e \in E\setminus\{c_i \} } } \delta_{c_i,\vartheta} \cdots \delta_{c_k,\vartheta} \Bigl(\mathbf{ad}_{\m{p}\zeta^i\m{c}\m{p}^*} - \mathbf{ad}_{\m{p}\zeta^i\m{c}e[\m{p}e]^*}\Bigr),
\]
$\delta_{c_0,\vartheta } \cdots \delta_{c_k, \vartheta } =0$, and $k+1 : = \min_{1\le j \le \ell +1} \{j: \zeta^j\m{c} = \m{c}\}$.

For every $0 \le i \le k$, denote by $\widetilde{\mathsf{S}}_{\zeta^i\m{c}}$ the series $\mathsf{S}_{\zeta^i\m{c}}  - \sum_{\m{p} \in \m{P}  \cup V } \mathbf{ad}_{\m{p}\zeta^i\m{c}\m{p^*} }$ and let $\mathsf{S}_i: = \sum_{ \substack{ \m{p} \in \m{P}\cup V\\ e \in E\setminus\{c_i \} } } \Bigl(\mathbf{ad}_{\m{p}\zeta^i\m{c}\m{p}^*}(x) - \mathbf{ad}_{\m{p}\zeta^i\m{c}e[\m{p}e]^*}(x) \Bigr)$. Then we can write
\[
\widetilde{\mathsf{S} }_{\zeta^i\m{c}} = \mathsf{S}_i + \delta_{c_{i+1},\vartheta } \cdots \delta_{c_{i-1},\vartheta } \mathsf{S}_{i+1}  + \cdots + \delta_{c_{i-1},\vartheta } \mathsf{S}_{i-1}.
\]

By $\delta_{c_0,\vartheta} \cdots \delta_{c_k, \vartheta_{k}} =0$,
\begin{multline*}
 \delta_{c_{i-1},\vartheta } \widetilde{\mathsf{S} }_{\zeta^{i-1}\m{c}} = \delta_{c_{i-1},\vartheta } \mathsf{S}_{i-1} + \delta_{c_{i-1},\vartheta } \delta_{c_{i+1,\vartheta } } \cdots \delta_{c_{i-2},\vartheta } \mathsf{S}_{i+1}\\ + \cdots +\delta_{c_{i-1}, \vartheta}  \delta_{c_{i-2},\vartheta}  \mathsf{S}_{i-2},
\end{multline*}
so that $\widetilde{\mathsf{S} }_{\zeta^i\m{c}} = \mathsf{S}_i + \delta_{c_{i-1},\vartheta} \widetilde{\mathsf{S} }_{\zeta^{i-1}\m{c}}$. It follows that
\[
 \D_{\zeta^i\m{c}}(x)  = \sum_{\m{p} \in \m{P} \cup V} \mathbf{ad}_{\m{p}\zeta^i \m{c}\m{p}^*}(x) + \delta_{c_{i-1},\vartheta} \D_{\zeta^{i-1}\m{c}} (x)  - \delta_{c_{i-1},\vartheta} \sum_{ \substack{ \m{p} \in \m{P} \cup V \\ e \in E \setminus \{ c_{i-1} \} } } \mathbf{ad}_{\m{p}\zeta^i\m{c} e[\m{p}e]^*}(x)
\]
for every $x \in L_K(\Gamma)$. Taking into account Lemma \ref{Dwh=ad}, we obtain
\[
 \D_{\zeta^i \m{c}}(x) + \sum_{e \in E\setminus \{c_{i-1}\}} \D_{\zeta^{i-1}\m{c} ee^*}(x) -\delta_{c_{i-1}, \vartheta} \D_{\zeta^{i-1}\m{c}}(x)  = \sum_{\m{p} \in \m{P} \cup V}  \mathbf{ad}_{\m{p}\zeta^i\m{c}\m{p}^*}(x),
\]
because $\delta_{c_0,\vartheta} \cdots \delta_{c_k, \vartheta} =0$, and the statement follows.
\end{proof}

\subsubsection{Derivations $\D_{ef^*}$}~\\

We start with some notations. \textit{A special path} $\m{\overline p}: =  \bar e_1  \cdots \bar e_n$ in $\Gamma$ is a sequence of special edges $\bar e_1, \ldots, \bar e_n \in \vartheta(V)$ such that $r(\bar e_i) = s(\bar e_{i+1})$ or $r(\bar e_i) = r(\bar e_{i+1})$ for $i=1,\ldots, n-1$. Set $s(\m{\overline p}): = s(\bar e_1)$ and $r(\m{\overline p}): = r(\bar e_n)$. Denote by $ \m{\overline P}$ the set of all special paths of $\Gamma$. Further, set $V \supseteq \widetilde{V} : = \{v \in V: r^{-1}(v) \cap \vartheta(V) = \varnothing\}$.

Next, let $\m{\overline p}: =  \bar e_1  \cdots \bar e_n$ be a special path and $v\in V$. We write $v \in \m{\overline p}$ if $v = s(\bar e_i)$ or $v = r(\bar e_i)$ for some $1 \le i \le n$. If $E \ni e \ne e_i$ for every $1 \le i \le n$ we then write $e \not \subset \m{\bar p}$.

Set $\mathsf{supp}_e(v):=  \{ \m{\overline p}  \in \m{\overline P}: v\in \m{\overline p}, e \not \subset \m{\overline p } \}$ for every fixed $e\in E$ and $v \in V$. If $\mathsf{supp}_e(s(e)) \cap \mathsf{supp}_e(r(e)) = \varnothing$ then there is no special path $\m{\overline p}$ with $s(\m{\overline p}) = s(e)$, $r(\m{\overline p}) = r(e)$ or $s(\m{\overline p})= r(e)$, $r(\m{\overline p}) = s(e)$.

\begin{lemma}
Let $e\in E$ be an edge with $s(e) \ne r(e)$. Suppose that $\D_{ee^*}(x) = \sum_{\m{wh^* \in B}} \alpha(\m{wh^*}) \mathbf{ad}_{\m{wh^*}}(x)$ for every $x\in V \cup E \cup E^*$.

\begin{itemize}
  \item[(1)] If the edge $e$ is not special then $\D_{ee^*} = \sum_{\m{p} \in \m{P}\cup V} \mathbf{ad}_{\m{p}e[\m{p}e]^*}$.\\
  \item[(2)] If the edge $e$ is special and $\mathsf{supp}_e(s(e)) \cap \mathsf{supp}_e(r(e)) = \varnothing$ then
  \[
      \D_{ee^*} = \alpha \Bigl( \mathsf{S}_{s(e)} + \sum_{\substack{ u\in \mathrm{supp}(s(e)) \\ u \ne r(e)} } \mathbf{ad}_{u}\Bigr)+ \beta \Bigl(\mathsf{S}_{r(e)} +\sum_{\substack{ u\in \mathrm{supp}(r(e)) \\ u \ne r(e)} } \mathbf{ad}_{u} \Bigr),
  \]
where $\alpha,\beta \in K$, $\alpha - \beta =1$, every $\sum_{\substack{ u\in \mathrm{supp}_e(s(e)) \\ u \ne r(e)} } \mathbf{ad}_{u}$, $\sum_{\substack{ u\in \mathrm{supp}_e(s(e)) \\ u \ne s(e)} } \mathbf{ad}_{u}$ does not contain equal terms, and
\[
  \mathsf{S}_v: = \sum_{\m{p} \in \m{P}\cup V} \sum_{\substack{g \in E \\ r(g) \in \mathsf{supp}_e(v) } } \mathbf{ad}_{\m{p}g[\m{p}g]^* } - \sum_{\substack{f \in E \\ s(f) \in \mathrm{supp}_e(v) } } \mathbf{ad}_{\m{p}f[\m{p}f]^* }.
\]
And if $\mathsf{supp}_e(s(e)) \cap \mathsf{supp}_e(r(e)) \ne \varnothing$ then the derivation $\D_{ee}^*$ cannot be presented as any series of form $ \sum_{\m{wh^* \in B}} \alpha(\m{wh^*}) \mathbf{ad}_{\m{wh^*}}.$
\end{itemize}

\end{lemma}

\begin{proof}
By Corollary \ref{genofDer}, the family $\{F_{e}(e) = 1, F_{e^*}(e^*)=-1 \}$ determines the derivation $\D_{ee}^*$. Using (\ref{I1}), (\ref{I4}), we get
\begin{eqnarray}\label{sysef}
\begin{cases}
 \alpha(s(e)) - \alpha(r(e)) + \alpha(ee^*) = 1,\\
 \alpha(s(f)) - \alpha(r(f)) + \alpha(ff^*) = 0, & f \in E, f \ne e, \delta_{f,\vartheta} =0,\\
 \alpha(\m{p}f[\m{p}f]^*) = \alpha(ff^*), & \m{p} \in \m{P}, f \in E, f \ne e, \\
 \alpha(s(\bar g)) = \alpha(r(\bar g)), & \bar g \in E, \bar g \ne e, \delta_{\bar g, \vartheta} = 1.
\end{cases}
\end{eqnarray}

If the edge $e$ is special, \textit{i.e.,} $\alpha(ee^*) = 0$, then $\alpha(s(e)) + \alpha(r(e)) = 1$. It implies that if $\mathsf{supp}_e(s(e)) \cap \mathsf{supp}_e(r(e)) \ne \varnothing$ we then get the contradiction $1=0$.

Assume that $\mathsf{supp}_e(s(e)) \cap \mathsf{supp}_e(r(e)) = \varnothing$, we have
\begin{eqnarray*}
  \D_{ee^*} &=& \sum_{\m{p} \in \m{P}\cup V} \sum_{f \in E}\alpha(ff^*) \mathbf{ad}_{\m{p}f[\m{p}f]^*} + \sum_{v \in \widetilde{V} \setminus \{s(e),r(e)\}} \alpha(v) \mathbf{ad}_{v}\\
  &&+ \alpha(s(e))\mathbf{ad}_{s(e)} + \alpha(r(e)) \mathbf{ad}_{r(e)}.
\end{eqnarray*}

Next, (\ref{sysef}) implies that $\alpha(ff^*) = \alpha(v) - \alpha(u)$, where the edge $f \ne e$ is not special and $v \in \mathsf{supp}_e(r(f))$ and $u \in \mathsf{supp}_e(s(f))$.

We have
\begin{multline*}
  \D_{ee^*} =  \sum_{\m{p} \in \m{P}\cup V} \alpha(ee^*) \mathbf{ad}_{\m{p}e[\m{p}e]^*} + \alpha(s(e)) \mathbf{ad}_{s(e)} + \alpha(r(e)) \mathbf{ad}_{r(e)}\\
  +\sum_{\substack{ v\in \widetilde{V} \\ v \ne s(e),r(e) } }  \sum_{ \substack{  f, g \in E\setminus\{e\} \\ r(g) \in \mathsf{supp}_e(v) \\ s(f) \in \mathsf{supp}_e(v)  }  } \sum_{\m{p} \in \m{P}\cup V}  \alpha(v)\Bigl( \mathbf{ad}_{\m{p}g[\m{p}g]^* } - \mathbf{ad}_{\m{p}f[\m{p}f]^* }  +\sum_{u\in \mathsf{supp}_e(v) } \mathbf{ad}_{u} \Bigr),
\end{multline*}
where $\sum_{v\in \widetilde{V}\setminus\{s(e),r(e)\}}\sum_{u\in \mathrm{supp}_e(v) } \mathbf{ad}_{u}$ does not contain equal terms.

(1) Assume that the edge $e$ is not special. Then $\alpha(ee^*) = 1 - \alpha(s(e)) + \alpha(r(e))$ and adding up similar terms, we get
\begin{multline*}
  \D_{ee^*} =  \sum_{\m{p} \in \m{P}\cup V} \mathbf{ad}_{\m{p}e[\m{p}e]^*}\\
 +\sum_{\substack{ v\in \widetilde{V} } }  \sum_{ \substack{  f, g \in E \\ r(g) \in \mathsf{supp}_e(v) \\ s(f) \in \mathsf{supp}_e(v)  }  } \sum_{\m{p} \in \m{P}\cup V}  \alpha(v)\Bigl( \mathbf{ad}_{\m{p}g[\m{p}g]^* } - \mathbf{ad}_{\m{p}f[\m{p}f]^* }  +\sum_{u\in \mathsf{supp}_e(v) } \mathbf{ad}_{u} \Bigr).
\end{multline*}

It is easily verified that the second series has the value $0$ for every $x\in V\cup E\cup E^*$, and thus the statement follows.

(2) Let $e$ be a special edge. Then $\alpha(ee^*) = 0$ and $\alpha(s(e)) - \alpha(r(e))= 1$, and we have
\begin{eqnarray*}
  \D_{ee^*} &=& \alpha(s(e)) \Bigl( \mathsf{S}_{s(e)} +\sum_{\substack{ u \in \mathsf{supp}_e(s(e)) \\ u \ne r(e)} } \mathbf{ad}_u \Bigr) + \alpha(r(e)) \Bigl( \mathsf{S}_{r(e)} +\sum_{\substack{ u \in \mathsf{supp}_e(r(e)) \\ u \ne s(e)} } \mathbf{ad}_u \Bigr)\\
  &&+ \sum_{ \substack{ v \in \widetilde{V} \\ v \ne s(e), r(e) } } \alpha(v) \Bigl( \mathsf{S}_v + \sum_{u \in \mathsf{supp}_e(v) } \mathbf{ad}_u \Bigr),
\end{eqnarray*}
where every $\sum_{\substack{ u\in \mathrm{supp}( s(e) ) \\ u \ne r(e)} } \mathbf{ad}_u$, $\sum_{u\in \mathrm{supp}(s(e)) } \mathbf{ad}_{u}$ and $\sum_{ \substack{ v \in \widetilde{V} \\ v \ne s(e), r(e) } }\sum_{u\in \mathrm{supp}(v)}  \mathbf{ad}_{u}$  does not contain equal terms.

By the direct verification, it is easy to check that the last series has value $0$ for every $x \in V \cup E\cup E^*$, and then the statement follows.
\end{proof}

As an immediate consequence of this Lemma we get the following

\begin{corollary}\label{Dee*+}
  Let $e$ be a special edge such that $s(e) \ne r(e)$, $\mathsf{supp}_e(s(e)) \cap \mathsf{supp}_e(r(e)) = \varnothing$, $| \mathsf{supp}_e(s(e)) |, |\mathsf{supp}_e(r(e))| < \infty$. Then, in $HH^1(L_K(\Gamma))$, we have
  \[
   \D_{ee^*} = \alpha \sum_{ \substack{ f, g \in E \setminus \vartheta(V) \\ r(g) \in \mathsf{supp}_e(s(e)) \\ s(f) \in  \mathsf{supp}_e(s(e))  } } \Bigl( \D_{gg^*}  - \D_{ff^*} \Bigr) + \beta \sum_{ \substack{ f, g \in E \setminus \vartheta(V) \\ r(g) \in \mathsf{supp}_e(r(e)) \\ s(f) \in  \mathsf{supp}_e(r(e))  } } \Bigl( \D_{gg^*}  - \D_{ff^*} \Bigr),
  \]
here $\alpha,\beta \in K$, $\alpha - \beta =1$.
\end{corollary}

\begin{lemma}\label{Def}
  Let $e,f \in E$ be two edges such that $s(e) = s(f)$ and $r(e) = r(f)$. Suppose that $\D_{ef^*}(x) = \sum_{\m{wh^*} \in \m{B}} \alpha(\m{wh^*})\mathbf{ad}_{\m{wh^*}}(x)$ for every $x\in L_K(\Gamma)$, then $\D_{ef^*}(x) = \sum_{\m{p} \in \m{P}\cup V} \mathbf{ad}_{\m{p}e[\m{p}f]^*}(x)$.
\end{lemma}
\begin{proof}
  Indeed, by Corollary \ref{genofDer}, the family $\{F_e(f) = 1, F_{f^*}(e^*) = - 1\}$ determines the derivation $\D_{ef^*}$. By (\ref{I1}), (\ref{I1*}), we have
\begin{align*}
  & F_e(f) = \delta_{e,f}\alpha(s(e)) - \delta_{e,f}\alpha(r(e)) + \alpha(ef^*) = 1,\\
  & F_{f^*}(e^*) = - \delta_{e,f} \alpha(s(f)) + \delta_{e,f} \alpha(r(f)) - \alpha(ef^*) = -1,
\end{align*}
and the statement follows.
\end{proof}

\begin{lemma}\label{ee*inM}
  Let $e\in E$ be a special edge and $s(e) = r(e)$. Then, $\D_{ee^*}(x)$ cannot be presented as a series $\sum_{\m{wh^*} \in \m{B}} \alpha(\m{wh^*})\mathbf{ad}_{\m{wh^*}}(x)$ for every $x\in L_K(\Gamma)$.
\end{lemma}
\begin{proof}
  Indeed, by Corollary \ref{genofDer}, the family $\{F_e(e) = 1, F_{e^*}(e^*) = - 1\}$ determines the derivation $\D_{ee^*}$. By (\ref{I1}), $\alpha(s(e)) - \alpha(r(e)) = 1$, so that we get $0=1$, and the statement follows.
\end{proof}

As an immediate consequence of Lemmas \ref{Dwh=ad}, \ref{Def} and \ref{ee*inM} we get the following

\begin{corollary}\label{Dwh-outer}
  Let $\m{wh^* \in M}$ and $s(\m{w}) = s(\m{h})$. The derivation $\D_{\m{wh^*}}$ is inner if and only if the set $\{\m{p} \in \m{P}: r(\m{p}) = s(\m{w})\}$ is finite.
\end{corollary}

\subsection{Summary}~\\

We summarize all our results which we have obtained. To do so, let us recall some notations.

Let $\Gamma =(V,E)$ be a row-finite graph and $L_K(\Gamma)$ the Leavitt path algebra over a commutative ring $K$. The $K$-basis $\m{B}$ of $L_K(\Gamma)$ is described by Theorem \ref{GSBLev}: $\m{B} = V \cup \m{P \cup P^* \cup M^*}$.

We fix a function $\vartheta:V \setminus \{\mbox{sinks}\} \to E$ such that $s(\vartheta(v)) = v$ for an arbitrary $v \in V \setminus \{\mbox{sinks}\}$. Denote by $\vartheta(V)$ the set of all special edges of $\Gamma$. We write $\delta_{e,\vartheta} = 1$ if the edge $e$ is special and $\delta_{e,\vartheta} = 0$ otherwise.

Let $\m{P} \ni \m{p} = p_0p_1\cdots p_{z-1}p_z$ be the decomposition of the path $\m{p}$ via the edges $p_0,\ldots, p_z\in E$. Set $\m{p}/p_0: = p_1\cdots p_{z-1}p_z$. If $\m{p}=p_0$ then we set $\m{p}/p_0:=r(\m{p})$. If $\m{p} \in E$ then $[\m{p}/p_0]^*:=r(\m{p})$.

If $\m{P}\ni\m{c} = c_0c_1\cdots c_\ell$ is a directed cycle then we have the action of $\mathbb{Z} = \langle \zeta \rangle$ on $\m{c}$ by rotations: $\zeta^0 \m{c}: = \m{c}$, $\zeta^1 \m{c}: = c_1 \cdots c_\ell c_0$, $\ldots,$ $\zeta^\ell \m{c}: = c_\ell c_0 \cdots c_{\ell -1}$, $\zeta^{\ell +1 } \m{c} = \m{c}$, \textit{etc.} Set $k(\m{c}): = \mathrm{min}_{1 \le j  \le \ell +1}\{j: \zeta^j\m{c} =\m{c}\}-1$, $c_{-1} = c_\ell$, $c_{-2} = c_{\ell-1}$, \textit{etc.}

A special path $\m{\overline p}: =  \bar e_1  \cdots \bar e_n$ in $\Gamma$ is a sequence of special edges $\bar e_1, \ldots, \bar e_n \in \vartheta(V)$ such that $r(\bar e_i) = s(\bar e_{i+1})$ or $r(\bar e_i) = r(\bar e_{i+1})$ for $i=1,\ldots, n-1$. Denote by $ \m{\overline P}$ the set of all special paths of $\Gamma$. For a vertex $v \in V$ we write $v \in \m{\overline p}$ if $v = s(\bar e_i)$ or $v = r(\bar e_i)$ for some $1 \le i \le n$. If $E \ni e \ne e_i$ for every $1 \le i \le n$ we then write $e \not \subset \m{\bar p}$. Set $\mathsf{supp}_e(v):=  \{ \m{\overline p}  \in \m{\overline P}: v\in \m{\overline p}, e \not \subset \m{\overline p } \}$ for every fixed $e\in E$ and $v \in V$.

\begin{theorem}\label{genthm}
  Let $\Gamma = (V,E)$ be a row-finite graph, $L_K(\Gamma)$ the Leavitt path algebra over a commutative ring $K$. The $K$-module $HH^1(L_K(\Gamma))$ of outer derivations of $L_K(\Gamma)$ can be presented by the following sets of generators:
    \begin{itemize}
   \item[(G1)] $\bigcup_{\substack{\m{wh^*} \in \m{M}, e \in E \\ s(\m{w}) = s(\m{h})}}\{\D_{\m{wh}^*}, \D_{ee^*}\}$, where $\D_{ee^*}(e) = e$, $\D_{ee^*}(e^*) =-e^*$, $\D_{ee^*}=0$ otherwise, and $\D_{\m{wh}^*}(h_0) = \m{w}[\m{h}/h_0]^*,$  $\D_{\m{wh}^*}(w_0^*) = -[\m{w}/w_0]\m{h}^*$, $\D_{\m{wh^*}} = 0$ otherwise;
  \item[(G2)] $\bigcup_{\substack{\m{c} \in \m{P} \\ s(\m{c}) = r(\m{c})}}\{\D_\m{c}\}$, where  $\D_\m{c}(e) = \m{c}e$, for every $e\in E$,  $\D_\m{c}(c_0^*) = -\m{c}/c_0$, and $\D_\m{c} = 0$ otherwise;
  \item[(G3)] $\bigcup_{\substack{\m{c} \in \m{P} \\ s(\m{c}) = r(\m{c})}}\{\D_{\m{c}^*}\}$, where $\D_{\m{c}^*}(e^*) = -[\m{c}e]^*$, for every $e^* \in E^*$,  $\D_{\m{c}^*}(c_0) = [\m{c}/c_0]^*$, and $\D_{\m{c}^*} = 0$ otherwise;
\end{itemize}
and the following sets of relations among those generators:
\begin{itemize}
  \item[(R1)] $\D_{\m{wh^*}}= 0$ whenever $|\{\m{p \in P}: r(\m{p}) =s(\m{w}) \}| < \infty$.\\

 \item[(R2)] $\D_{\m{wh^*}} = \sum_{e \in E}\D_{e\m{w}[e\m{h}]^*}$ whenever $|\{e \in E: r(e) = s(\m{w})\}| < \infty$.\\
 \item[(R3)] If the edge $e$ is special, $s(e) \ne r(e)$, $\mathsf{supp}_e(s(e)) \cap \mathsf{supp}_e(r(e)) = \varnothing$, $| \mathsf{supp}_e(s(e)) |, |\mathsf{supp}_e(r(e))| < \infty$, then
  \[
   \D_{ee^*} = \alpha \sum_{ \substack{ f, g \in E \setminus \vartheta(V) \\ r(g) \in \mathsf{supp}_e(s(e)) \\ s(f) \in  \mathsf{supp}_e(s(e))  } } \Bigl( \D_{gg^*}  - \D_{ff^*} \Bigr) + \beta \sum_{ \substack{ f, g \in E \setminus \vartheta(V) \\ r(g) \in \mathsf{supp}_e(r(e)) \\ s(f) \in  \mathsf{supp}_e(r(e))  } } \Bigl( \D_{gg^*}  - \D_{ff^*} \Bigr),
  \]
here $\alpha,\beta \in K$, $\alpha - \beta =1$.\\
 \item[(R4)] $\D_{\m{c}}  =\D_{\zeta^{k}\m{c}} - \sum_{e \in E} \D_{\zeta^k \m{c} ee^*}$ whenever $\delta_{c_0,\vartheta} \cdots \delta_{c_{k-1},\vartheta} = 0$, $\delta_{c_k, \vartheta}=1$ and $|\{ \m{p} \in \m{P}: r(\m{p}) = s(c_0), p_z \ne c_k \}| < \infty$. Here $\m{c}: = c_0\cdots c_\ell$ is the decomposition of $\m{c}$ via the edges, $k = k(\m{c})$.
\end{itemize}
Moreover, if we put $(e^*)^* = e$ and $(e \cdot f)^* = f^*\cdot e^*$ for every $e,f\in E\cup E^*$, then we have $(\D_\m{c})^*=- \D_{\m{c}^*}$, $(\D_{\m{wh^*}})^*= - \D_{\m{hw^*}}$ and $(\D_{ee^*})^* = - \D_{ee^*}$.
\end{theorem}

\begin{proof}
  First, from Corollaries \ref{genofDer}, \ref{Dc-outer} and \ref{Dwh-outer}, it follows that $HH^1(L_K(\Gamma))$ is generated by (G1), (G2) and (G3). Further, Remark \ref{*cor} implies the last statement.

  Next, from Corollaries \ref{Dwh=sum}, \ref{Dc=Dkc}, \ref{Dwh-outer} and \ref{Dee*+} it follows the set of relations (R1) -- (R4).

  Finally, by the straightforward verification, it is easy to check that we cannot get another relations among the generators by using series $\mathsf{S}_{\m{c}}$, $\mathsf{S}_{\m{c}^*}$, $\mathsf{S}_{\m{wh^*}}$. This completes the proof.
\end{proof}

We conclude this section with describing the dimension of the vector space of outer derivations of Leavitt path algebra.

\begin{corollary} Let $K$ be a field, for the vector space $HH^1(L_K(\Gamma))$ of outer derivation of the Leavitt path algebra $L_K(\Gamma)$, we have
\[
  \mathrm{dim} HH^1(L_K(\Gamma)) = \begin{cases}
   \infty, & \mbox{if there exists a path in $\Gamma$ with infinite length,}\\
   0, & \mbox{otherwise.}
  \end{cases}
\]
\end{corollary}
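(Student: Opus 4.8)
The plan is to deduce the dichotomy directly from the explicit presentation of $HH^1(L_K(\Gamma))$ given in Theorem \ref{genthm}, so the proof is essentially a bookkeeping exercise about which of the generators $\D_{\m{wh^*}}$, $\D_{ee^*}$, $\D_\m{c}$, $\D_{\m{c}^*}$ survive as nonzero outer derivations and when the relations (R1)--(R3) collapse everything to zero. First I would treat the ``otherwise'' case: suppose every path in $\Gamma$ has finite length. Then $\Gamma$ contains no directed cycle, so the generator families (G2) and (G3) are empty. Moreover, for any $\m{wh^*} \in \m{M}$ the path $\m{w}$ lies in a graph with no infinite paths, so there is no path $\m{p}$ with $\m{p}\m{w}\ne 0$ and $|\m{p}| = \infty$; by Corollary \ref{Dwh-outer} (equivalently relation (R1)) every $\D_{\m{wh^*}}$ is inner, hence zero in $HH^1$. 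Finally, acyclicity forces $s(e) \ne r(e)$ for every edge, so by the last clause of (R1) every $\D_{ee^*}$ with $e$ special is also zero; the non-special $\D_{ee^*}$ are handled by (R2), rewriting them in terms of longer words until the process terminates (again by finiteness of path lengths) in genuinely inner pieces. Hence $HH^1(L_K(\Gamma)) = 0$.

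For the ``infinite path'' case I would exhibit an explicit infinite linearly independent family. Fix an infinite path $q_0 q_1 q_2 \cdots$ in $\Gamma$ with $s(q_i) = v_i$. The natural candidates are the derivations $\D_{\m{c}}$ when an infinite path meets a cycle, and the derivations $\D_{\m{wh^*}}$ otherwise; in both situations Corollary \ref{Dc-outer}, Corollary \ref{Dwh-outer} and Lemma \ref{Dwh=ad} tell us these are outer precisely because the relevant series $\sum_{\m{p}\in\m{P}}\mathbf{ad}_{\m{p}(\cdots)\m{p}^*}$ is genuinely infinite. Concretely: if the infinite path is eventually periodic it runs into a directed cycle $\m{c}$, and then $\D_{\m{c}}$ is a nonzero outer derivation by Corollary \ref{Dc-outer}; to get infinitely many I would look at the derivations attached to the cycle together with the elements $\D_{\m{c}^k}$ for the iterated cycles $\m{c}, \m{c}\m{c}, \m{c}\m{c}\m{c}, \dots$, or alternatively the distinct rotations combined with longer approach-paths, and check via the systems (\ref{sysforc})/(\ref{sysforwh}) that no nontrivial finite $K$-linear combination is inner. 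If the infinite path is \emph{not} eventually periodic, then for each $n$ the word $\m{w}^{(n)} := q_0 q_1 \cdots q_n$ together with any allowed $\m{h}^{(n)}$ gives $\m{w}^{(n)}\m{h}^{(n)*} \in \m{M}$ with an infinite path extending $\m{w}^{(n)}$ on the left, so $\D_{\m{w}^{(n)}\m{h}^{(n)*}}$ is outer; since the words $\m{w}^{(n)}$ have strictly increasing length, relation (R2) cannot identify $\D_{\m{w}^{(n)}\m{h}^{(n)*}}$ with any $\D_{\m{w}^{(m)}\m{h}^{(m)*}}$ for $n\ne m$ (the rewriting in (R2) only fires when $e\m{w}\ne 0$ for finitely many $e$, and in any case it increases word length), and (R1), (R3) do not apply. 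Thus $\{\D_{\m{w}^{(n)}\m{h}^{(n)*}} : n \ge 0\}$ is an infinite family of nonzero, pairwise-inequivalent classes in $HH^1(L_K(\Gamma))$, so the dimension is infinite.

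I would organize the write-up as two short paragraphs mirroring the two cases, invoking Theorem \ref{genthm} for the presentation and Corollaries \ref{Dc-outer}, \ref{Dwh-outer} for outerness, and Lemma \ref{Dwh=ad} together with the linear systems (\ref{sysforc}) and (\ref{sysforwh}) for the independence claims. The main obstacle I anticipate is the linear-independence argument in the eventually-periodic subcase: there the relation (R3) genuinely identifies $\D_\m{c}$ with a combination of $\D_{\zeta^k\m{c}}$ and the $\D_{\zeta^k\m{c} ee^*}$ when the truncation set $\{\m{p}\in\m{P} : \m{p}c_0\ne 0,\ p_z\ne c_\ell\}$ is finite, so one must choose the infinite family so that these collapsing relations never produce a finite dependence — for instance by using cycles of strictly increasing length (the iterates $\m{c}^k$) and reading off from (\ref{sysforc}) that the coefficient $\alpha(\m{c}^k) = 1$ is forced, which rules out any finite cancellation. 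The acyclic case and the non-periodic case are essentially immediate from the stated results; it is only this cycle-bookkeeping that needs care.
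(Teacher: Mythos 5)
Your overall plan --- reading the dichotomy off the presentation in Theorem \ref{genthm} --- is the same as the paper's, and two of your three cases essentially coincide with what the paper does: the acyclic case with all paths finite (where every generator dies) and the case where $\Gamma$ contains a directed cycle (where the family $\D_{\m{c}},\D_{\m{cc}},\D_{\m{ccc}},\dots$ is used). The genuine gap is in the remaining case: no directed cycle, but an infinite path $q_0q_1q_2\cdots$. Your candidate family $\D_{\m{w}^{(n)}\m{h}^{(n)*}}$ with $\m{w}^{(n)}=q_0\cdots q_n$ is justified by saying that the infinite path ``extends $\m{w}^{(n)}$ on the left,'' but it extends it on the \emph{right}. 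The criterion of Corollary \ref{Dwh-outer} asks for a path $\m{p}$ with $\m{p}\m{w}^{(n)}\ne 0$ and $|\m{p}|=\infty$, i.e.\ for arbitrarily long paths \emph{terminating at} $s(\m{w}^{(n)})=s(q_0)$; a forward-infinite path issuing \emph{from} $s(q_0)$ supplies no such paths. Concretely, for the ray $v_0\to v_1\to v_2\to\cdots$ no path at all ends at $v_0$, so by the very corollary you invoke every $\D_{\m{w}^{(n)}\m{h}^{(n)*}}$ is inner, and your ``infinite family of pairwise-inequivalent classes'' consists entirely of zero classes. Re-anchoring the words further along the ray does not repair this: the paths ending at $s(q_n)$ have length at most $n$, so the relevant series $\sum_{\m{p}}\mathbf{ad}_{\m{p}\m{w}[\m{p}\m{h}]^*}$ is always a finite sum. (A smaller point: your dichotomy is phrased in terms of whether the chosen infinite path is eventually periodic, whereas the relevant dichotomy is whether $\Gamma$ contains a directed cycle at all; a non--eventually-periodic path can coexist with a cycle, and then you would land in the branch where your argument fails.)

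The paper avoids this by using a different family in the acyclic, infinite-path case: it takes the derivations $\D_{ee^*}$ attached to the edges $e$ of the infinite path and argues from Theorem \ref{genthm} that these yield infinitely many nonzero classes. So to close the gap you would need to abandon the prefix words $\m{w}^{(n)}\m{h}^{(n)*}$ in this case and instead show that (suitable combinations of) the $\D_{ee^*}$ along the path survive in $HH^1(L_K(\Gamma))$, which is the route the paper takes.
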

\begin{proof}
  Indeed, let the graph $\Gamma$ has at least one directed cycle (say) $\m{c}$. By Theorem \ref{genthm}, $HH^1(L_K(\Gamma))$ has infinitely many generators of form $\D_{\m{c}}$, $\D_{\m{cc}}$, $\D_{\m{ccc}}$, \textit{etc.} Further, if the graph $\Gamma$ does not contain directed cycles but it has at least one path (say) $\m{p}$ such that $|\m{p}| = \infty$, then from Theorem \ref{genthm} it follows that every its edge (say) $e$ gives the non-zero derivation $\D_{ee^*}$. Finally, if the graph $\Gamma$ is finite and it does not contain directed cycles, then by Theorem \ref{genthm}, we have no non-zero derivations.
\end{proof}

\section{Derivations of the $C^*$-algebras}
In this section we prove that, every derivation of the Leavitt path algebra $L_\mathbb{C}(\Gamma)$ can be extended to the derivation of the algebra $C^*(\Gamma)$.

An algebra $\Lambda$ with a unit $1_\Lambda$ over the complex numbers $\mathbb{C}$ is called a $*$-algebra if there is a map $*:\Lambda \to \Lambda$ such that $(x+y)^* = x^*+y^*$, $(x\cdot y)^* = y^*\cdot x^*$, $1_\Lambda^* = 1_\Lambda$, $(x^*)^* = x$ and $(\gamma\cdot x)^* = \overline \gamma \cdot x^*$ for all $x,y\in \Lambda$, $\gamma\in \mathbb{C}$, where $\overline \gamma$ denotes the complex conjugate of $\gamma$.

A $C^*$-norm on a $*$-algebra $\Lambda$ is a function $|| \cdot||: \Lambda \to \mathbb{R}^+$ for which: $||a\cdot b|| \le ||a||\cdot ||b||$; $||a+b|| \le ||a|| + ||b||$; $||a\cdot a^*|| = ||a||^2 = ||a^*||^2$; $||a|| =0$ iff $a =0$; and $||\gamma \cdot  a|| = |\gamma| \cdot  ||a||$ for all $a,b\in \Lambda$ and $\gamma \in \mathbb{C}$.

\begin{definition}
  A $C^*$-algebra is a $*$-algebra $\Lambda$ endowed with a $C^*$-norm $|| \cdot ||$, for which $\Lambda$ is complete with respect to the topology induced by $|| \cdot ||$.
\end{definition}

Let us remind a description of a $C^*$-algebra, from the operator-theoretic point of view. Let $\mathcal{H}$ be a Hilbert space, and let $\mathbf{B}(\mathcal{H)}$ denote the set of all continuous linear operator on ${H}$. A $C^*$-algebra is an adjoint-closed subalgebra of $\mathbf{B}(\mathcal{H})$ which is closed with respect to the norm topology on $\mathbf{B}(\mathcal{H})$.

A {\it partial isometry} is an element $x$ in a $C^*$-algebra $\Lambda$ for which $y = x^*x$ is a self-adjoint idempotent; that is, we have in case $y^*=y$ and $y^2 = y$.

\begin{definition}
  Let $\Gamma = (V,E)$ be a finite graph. Let $C^*(\Gamma)$ denote the universal $C^*$-algebra generated by a collection of mutually orthogonal projections $ \{ {P}_v: v \in V \}$ together with partial isometries $ \{{S}_e: e \in E \}$ which satisfy the Cuntz--Krieger relations:
  \begin{itemize}
    \item[(CK1)] ${S}^*_e{S}_e = {P}_{r(e)}$ for all $e \in E$,\\
    \item[(CK2)] ${P}_v = \sum_{s(e) =v}{S}_e{S}_e^*$ for each non-sink $v \in V$.
  \end{itemize}
  The set $\bigcup_{v\in V,\,e \in E} \{{P}_v,\,{S}_e \}$ is called a Cuntz--Krieger $\Gamma$-family.
\end{definition}

It has been proved in \cite{Tom} that there exists an injective $L_\mathbb{C}(\Gamma)$ to $C^*(\Gamma)$. Further, we have the following

\begin{theorem}{\cite[Theorem 1.2]{KPR}}\label{ThKRP}
  Let $\Gamma$ be a directed graph. Then there is a $C^*$-algebra $C^*(\Gamma)$ generated by a Cuntz--Krieger $\Gamma$-family $\{p_v,s_e \}$ of non-zero elements such that, for every Cuntz--Krieger $\Gamma$-family $\{{P}_v,{S}_e \}$ of partial isometries on $\mathcal{H}$, there is a representation $T$ of $C^*(\Gamma)$ on $\mathcal{H}$ such that $T(p_v) = {P}_v$ and $T(s_e) = {S}_e$ for all $v \in V$, $e \in E$.
\end{theorem}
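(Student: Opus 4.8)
The plan is to obtain $C^*(\Gamma)$, together with the generating Cuntz--Krieger family $\{\widetilde{\mathcal{P}}_v,\widetilde{\mathcal{S}}_e\}$, as the enveloping $C^*$-algebra of a universal $*$-algebra, so that the asserted universal property falls out of the construction itself. First I would form the free complex $*$-algebra $\mathcal{A}$ on symbols $p_v$ ($v\in V$) and $s_e$ ($e\in E$), and impose exactly the relations saying that the $p_v$ are mutually orthogonal self-adjoint idempotents, each $s_e$ is a partial isometry with $s_e^*s_e=p_{r(e)}$ and $s_e^*s_f=0$ for $e\neq f$, that $p_{s(e)}s_e=s_e=s_e p_{r(e)}$, and (CK2) $p_v=\sum_{s(e)=v}s_es_e^*$ for every non-sink $v$. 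A routine manipulation of these relations shows that $\mathcal{A}$ is linearly spanned by the monomials $s_\mu s_\nu^*$ with $r(\mu)=r(\nu)$ ($\mu,\nu$ paths, the length-zero case giving the $p_v$) --- the very spanning set that underlies Theorem~\ref{GSBLev} for $L_{\mathbb{C}}(\Gamma)$.

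Next I would put $\|a\|_u:=\sup_\pi\|\pi(a)\|$, the supremum over all $*$-representations $\pi$ of $\mathcal{A}$ on Hilbert spaces. Finiteness is immediate: in any $*$-representation the image of $p_v$ is a projection and the image of $s_e$ a partial isometry, hence of norm $\le 1$, so writing a given $a$ as a finite sum $\sum c_{\mu\nu}s_\mu s_\nu^*$ yields $\|\pi(a)\|\le\sum|c_{\mu\nu}|$ uniformly in $\pi$; thus $\|\cdot\|_u$ is a finite $C^*$-seminorm and we may define $C^*(\Gamma)$ as the completion of $\mathcal{A}/\{a:\|a\|_u=0\}$ in $\|\cdot\|_u$, writing $\widetilde{\mathcal{P}}_v,\widetilde{\mathcal{S}}_e$ for the images of $p_v,s_e$.

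The genuinely substantive point is that these images are non-zero. For this I would exhibit one concrete Cuntz--Krieger $\Gamma$-family with all entries non-zero, on $\ell^2(X)$ where $X$ is the boundary-path space of $\Gamma$ (infinite paths together with finite paths ending at a sink): let $\mathcal{P}_v$ be the projection onto $\overline{\mathrm{span}}\{\delta_x:s(x)=v\}$ and $\mathcal{S}_e$ the partial isometry $\delta_x\mapsto\delta_{ex}$, defined when $s(x)=r(e)$. Row-finiteness makes the relevant sums and shifts bounded, and (CK1)--(CK2) are verified directly --- the point of using boundary paths rather than all finite paths is exactly that (CK2) then holds, since a boundary path starting at a non-sink has positive length. (Alternatively one invokes the groupoid model of \cite{KPR}.) Composing the quotient map with the representation attached to this family gives $\|p_v\|_u\ge\|\mathcal{P}_v\|=1$ and $\|s_e\|_u\ge\|\mathcal{S}_e\|=1$, so $\|\cdot\|_u$ is a norm and the generators persist in $C^*(\Gamma)$.

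Finally, the universal property is formal. Given an arbitrary Cuntz--Krieger $\Gamma$-family $\{\mathcal{P}_v,\mathcal{S}_e\}$ of partial isometries on $H$, the assignment $p_v\mapsto\mathcal{P}_v$, $s_e\mapsto\mathcal{S}_e$ extends uniquely to a $*$-homomorphism $\pi_0:\mathcal{A}\to\mathbb{B}(H)$ --- precisely because the defining relations of $\mathcal{A}$ are exactly the relations such a family satisfies --- and $\|\pi_0(a)\|\le\|a\|_u$ by definition, so $\pi_0$ descends to the quotient and extends continuously to a representation $T$ of $C^*(\Gamma)$ with $T(\widetilde{\mathcal{P}}_v)=\mathcal{P}_v$ and $T(\widetilde{\mathcal{S}}_e)=\mathcal{S}_e$; $T$ is unique since $\{\widetilde{\mathcal{P}}_v,\widetilde{\mathcal{S}}_e\}$ generates $C^*(\Gamma)$. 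I expect the main obstacle to be the non-vanishing in the third step: one has to be careful with sinks and with vertices lying on no boundary path, and the naive Fock-type representation on all finite paths fails (CK2) --- this is exactly why \cite{KPR} pass to the graph groupoid, and the same care is what the proof really turns on.
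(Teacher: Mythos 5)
The paper does not prove this statement at all: it is quoted verbatim from \cite[Theorem 1.2]{KPR}, so there is no internal proof to compare against. Your construction --- universal $*$-algebra on the Cuntz--Krieger relations, the finite universal $C^*$-seminorm via the $s_\mu s_\nu^*$ spanning set, completion, non-degeneracy via the boundary-path representation, and the formal universal property --- is the standard and correct argument, essentially the one in \cite{KPR} and in Raeburn's later direct treatment. The one point you flag as delicate is in fact harmless here: in a row-finite graph every vertex emits at least one boundary path (extend greedily edge by edge, stopping if you reach a sink), so each $\mathcal{P}_v$ is automatically non-zero and no vertex is ``missed'' by the boundary-path space; and the auxiliary relations you impose beyond (CK1)--(CK2) (namely $s_e^*s_f=0$ for $e\ne f$ and $p_{s(e)}s_e=s_e=s_ep_{r(e)}$) are consequences of (CK1)--(CK2) for any such family, so they do not restrict the class of representations and the universal property goes through as you state.
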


Using this Theorem and our description of derivations, we can prove the following
\begin{theorem}\label{Extend}
  Let $\Gamma = (V,E)$ be a row-finite directed graph. Every derivation of the Leavitt path algebra $L_\mathbb{C}(\Gamma)$ can be extended to a derivation of the algebra $C^*(\Gamma)$.
\end{theorem}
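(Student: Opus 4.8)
The plan is to transport the explicit generators of $\mathbf{Der}(L_\mathbb{C}(\Gamma))$ furnished by Corollary~\ref{genofDer} through the canonical inclusion $\iota\colon L_\mathbb{C}(\Gamma)\hookrightarrow C^*(\Gamma)$ of \cite{Tom}, and to check that each one extends to a bounded derivation of the $C^*$-completion. Since $\mathbf{Der}(L_\mathbb{C}(\Gamma))$ is generated as a $\mathbb{C}$-module by the families $\mathbf{ad}_{\m{pq}^*}$, $\D_{\m{wh}^*}$, $\D_{ee^*}$, $\D_\m{c}$ and $\D_{\m{c}^*}$, and extension is linear, it suffices to treat a single generator; and by Corollary~\ref{*cor} the $\D_{\m{c}^*}$ and $\D_{\m{hw}^*}$ cases follow from the $\D_\m{c}$ and $\D_{\m{wh}^*}$ cases by applying the $*$-involution (which is isometric on $C^*(\Gamma)$), so really only three shapes of generator must be handled.

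First I would recall that inner derivations present no problem: if $\lambda\in L_\mathbb{C}(\Gamma)$ then $\mathbf{ad}_\lambda$ extends to $\mathbf{ad}_{\iota(\lambda)}$ on all of $C^*(\Gamma)$, which is bounded with norm at most $2\|\iota(\lambda)\|$. Next, for the finitely-supported generators $\D_{\m{wh}^*}$ and $\D_{ee^*}$ (the latter with $e$ special), I would observe that each sends the finitely many generators $v,e,e^*$ into the span of at most two basis monomials, and the monomials $\m{wh}^*$ of $L_\mathbb{C}(\Gamma)$ map under $\iota$ to products $\mathcal{S}_\m{w}\mathcal{S}_\m{h}^*$ which have operator norm $\le 1$; I would then invoke Theorem~\ref{ThKRP}: the universal property lets us build a Cuntz--Krieger $\Gamma$-family inside a suitable $C^*$-algebra realizing the ``twisted'' generators $\mathcal{P}_v+t\,\mathscr{D}(v)$, $\mathcal{S}_e+t\,\mathscr{D}(e)$ for a formal or small real parameter $t$, giving a one-parameter family of $*$-homomorphisms out of $C^*(\Gamma)$ whose derivative at $t=0$ is the desired extension. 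Equivalently and more cleanly, I would exhibit each $\D_{\m{wh}^*}$ directly as $\mathbf{ad}_{\iota(\m{wh}^*)}$ modulo the infinite correction terms of Lemma~\ref{Dwh=ad}, but observe that on the relevant finite-dimensional corners the correction terms telescope to something bounded; boundedness on a generating set of a $C^*$-algebra, together with the Leibniz rule, propagates to a (closable, hence bounded on the dense subalgebra) derivation by the standard estimate $\|\mathscr{D}(ab)\|\le\|\mathscr{D}(a)\|\,\|b\|+\|a\|\,\|\mathscr{D}(b)\|$.

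The genuinely new case is the cycle derivation $\D_\m{c}$ with $\m{c}$ a directed cycle: here $\D_\m{c}(e)=\m{c}e$ and $\D_\m{c}(c_0^*)=-\m{c}/c_0$, so on the generators the values again lie in the span of norm-$\le1$ monomials, and the same universal-property argument applies verbatim — the point is that $C^*(\Gamma)$ is \emph{defined} by a universal property among Cuntz--Krieger families, so perturbing the family by the (norm-controlled) images of $\mathscr{D}$ on generators and differentiating produces the extension, with no convergence issue because only finitely many generators are moved. I would then note that the resulting derivation of $C^*(\Gamma)$ restricts to $\mathscr{D}$ on the dense copy $\iota(L_\mathbb{C}(\Gamma))$ by construction, and that uniqueness of the extension (once we demand it be norm-continuous) is automatic since $L_\mathbb{C}(\Gamma)$ is dense.

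The main obstacle I anticipate is verifying that each generator's values on $\{v,e,e^*\}$ really do extend to a \emph{bounded} map — i.e.\ producing the honest one-parameter family of Cuntz--Krieger families and checking the relations (CK1), (CK2) are preserved to first order in $t$ — rather than merely a derivation of the dense subalgebra; this is where one must use that the $\mathcal{S}_e$ are partial isometries (so $\|\mathcal{S}_e\|\le1$) together with the universal property of Theorem~\ref{ThKRP}, and where the finiteness of $\Gamma$ assumed in the definition of $C^*(\Gamma)$ is used to keep relation (CK2) a finite sum. Once boundedness on generators is in hand, extension to the completion is routine.
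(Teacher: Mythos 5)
Your core construction --- push the generators of Corollary~\ref{genofDer} through the embedding of \cite{Tom}, define the extension on the Cuntz--Krieger family $\{\mathcal{P}_v,\mathcal{S}_e\}$ by the same monomial formulas, and invoke Theorem~\ref{ThKRP} --- is exactly the paper's proof, and your reduction to three shapes of generator via linearity and Corollary~\ref{*cor} is consistent with it. The difficulty lies in the extra justifications you add on top of this.

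Two of those supporting arguments have genuine gaps. First, the ``twisted'' family $\mathcal{P}_v+t\,\mathscr{D}(v)$, $\mathcal{S}_e+t\,\mathscr{D}(e)$ satisfies (CK1)--(CK2) only to first order in $t$, so it is \emph{not} a Cuntz--Krieger $\Gamma$-family, and Theorem~\ref{ThKRP} produces no $*$-homomorphism out of $C^*(\Gamma)$ for you to differentiate at $t=0$; making this rigorous would require exponentiating the derivation you are trying to construct, which is circular. Second, and more seriously, the claim that boundedness on a generating set propagates via $\|\mathscr{D}(ab)\|\le\|\mathscr{D}(a)\|\,\|b\|+\|a\|\,\|\mathscr{D}(b)\|$ to a closable or bounded derivation of $C^*(\Gamma)$ is false: that estimate grows linearly in the length of a product and gives no uniform bound on the dense subalgebra. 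Indeed no norm-continuous extension can exist in general: for the single-loop graph $\Omega(1)$ of Example~\ref{Loran} one has $C^*(\Omega(1))\cong C(S^1)$, whose only bounded derivation is $0$, while $L_\mathbb{C}(\Omega(1))\cong\mathbb{C}[t,t^{-1}]$ carries the whole Witt algebra of nonzero derivations (e.g.\ $\D_{\m{c}}$ for $\m{c}$ the loop). The theorem can therefore only mean that each $\D$ extends to a derivation of the dense $*$-subalgebra generated by the Cuntz--Krieger family, i.e.\ a densely defined derivation of $C^*(\Gamma)$ --- which is all the paper's own argument delivers --- and your final paragraph, promising a bounded everywhere-defined extension with automatic uniqueness by density, proves too much. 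If you drop the boundedness and one-parameter-family claims and simply define $\mathcal{D}$ on the generators by the images of the monomials (as you do, and as the paper does), the remaining task is to check that $\mathcal{D}$ annihilates the defining relations of the dense subalgebra, which follows from Theorem~\ref{maintheorem} applied to the isomorphic copy of $L_\mathbb{C}(\Gamma)$ inside $C^*(\Gamma)$.
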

\begin{proof}
It is sufficient to prove that every outer derivation of $L_\mathcal{C}(\Gamma)$ can be extended to a derivation of $C^*(\Gamma)$.

Let $\{s_e,p_v\}$ be a Cuntz--Krieger $\Gamma$-family which generates $C^*(\Gamma)$ and let $\{S_e,P_v\}$ be a Cuntz--Krieger $\Gamma$ family of partial isometries on $\mathcal{H}$. From Theorem \ref{ThKRP} it follows that there is representation $T$ of $C^*(\Gamma)$ on $\mathcal{H}$ such that $T(p_v) = P_v$ and $T(s_e) = S_e$ for all $e\in E$, $v\in V$.

Next, the set of generators of $HH^1(L_\mathbb{C}(\Gamma))$ has been described in Theorem \ref{genthm}. For every edge $e \in E$, for every directed cycle $\m{c} = c_0c_1\ldots c_\ell \in \Gamma$ and for every $\m{wh^*}:= w_0\cdots w_n[h_0\cdots h_m]^* \in \m{M}$ with $s(\m{w}) = s(\m{h})$, define  the following partial isometries $D_{ee^*}$, $D_{\m{c}}$, $D_{\m{c}^*}$ and $D_{\m{wh^*}}$ on $\mathcal{H}$ as follows:
\begin{itemize}
  \item[(1)] $D_{\m{c}}(S_e): = {S}_{c_0}\circ \cdots \circ {S}_{c_\ell}\circ {S}_e$, $D_\m{c}({S}_{c_0}^*) = - {S}_{c_1}\circ \cdots \circ {S}_{c_\ell}$, and if $\m{c} = c_0$ then $D_\m{c}({S}_{c_0}^*)  = - P_{s(\m{c})}$, $D_{\m{c}} = 0$ otherwise.
  \item[(2)]  $D_{\m{c}^*}(S^*_e): = - [{S}_{c_0}\circ \cdots \circ {S}_{c_\ell}\circ {S}_e]^*$, $D_\m{c^*}({S}_{c_0}) = [{S}_{c_1}\circ \cdots \circ {S}_{c_\ell}]^*$, and if $\m{c} = c_0$ then $D_\m{c^*}({S}_{c_0})  = - P_{s(\m{c})}$, $D_{\m{c^*}} = 0$ otherwise.
  \item[(3)] $D_{ee^*}(S_e) = S_e$, $D_{ee^*}(S_e^*) = -S_e^*$ and $D_{ee^*} =0$ otherwise.
  \item[(4)] $D_{\m{wh^*}}(S_{h_{0}}) = S_{w_0} \circ \cdots \circ S_{w_n} \circ [ S_{h_1} \circ \cdots \circ S_{h_m}]^*$, and if $\m{h} =h_0$ then $D_{\m{wh^*}}(S_{h_{0}}) = S_{w_0} \circ \cdots \circ S_{w_n} \circ P_{r(\m{w})}$, $D_{\m{wh^*}}(S_{w_0}) = - S_{w_1} \circ \cdots \circ S_{w_n} \circ [S_{h_0} \circ \cdots \circ S_{h_m}]^*$, and if $\m{w} = w_0$ then $D_{\m{wh^*}}(S_{w_0}) = - S_{r(\m{h})} \circ [S_{h_0} \circ \cdots \circ S_{h_m}]^*$, and $\D_{\m{wh^*}} = 0$ otherwise.
\end{itemize}

Since the composition of bounded operators is a bounded operator, then it is easy to see that all ${D}_{\m{c}}$,${D}_{\m{c}^*}:=-({D}_\m{c})^*$, ${D}_\m{wh^*}$ are bounded operators. Further, it is easy to see that these are derivations for $C^*(\Gamma)$, which satisfy the corresponding equalities of Theorem \ref{genthm}.
\end{proof}

\section{The Lie Algebra Structure of Leavitt Algebra} Let $K$ be a commutative ring. We already know that the Leavitt algebra $L_K(1,\ell)$ of order $\ell$ arise as Leavitt path algebra of a graph $R_\ell$ which has only one vertex and $\ell$ edges $e_1,\ldots, e_\ell$.

We use the following notations. Let $\m{F}(E)$ be the free monoid generated by the finite set $E = \{e_1, e_2, \ldots, e_\ell\}$. We write $\m{w} = \m{w}_\lambda \m{w}' \m{w}_\rho$, for some (possibly empty) words $\m{w},\m{w}_\lambda, \m{w}_\rho$ and $\m{w}'$ of $\m{F}(E)$.

Without loss of generality, assume that the edge $e_1$ is special. Then $\m{M}$ is the set of words of the form $\m{wh^*}$, where $\m{w,h} \in \m{F}(E)$ and their last letters are either distinct or equal, but not $e_1$.

From Theorem \ref{genthm} it follows that \textit{the $K$-module $HH^1(L_K(1,\ell))$ of outer derivations (= the first Hochschild cohomology) of $L_K(1,\ell)$ can be presented by the following sets of generators:
\begin{itemize}
   \item[(1)] $\bigcup_{ \m{wh^*} \in \m{M}, e \in E }\{\D_{\m{wh}^*}, \D_{ee^*}\}$, where $\D_{ee^*}(e) = e$, $\D_{ee^*}(e^*) =-e^*$, $\D_{ee^*} = 0$ otherwise. $\D_{\m{wh}^*}(h_0) = \m{w}[\m{h}/h_0]^*,$  $\D_{\m{wh}^*}(w_0^*) = -[\m{w}/w_0]\m{h}^*$, $\D_{\m{wh^*}} = 0$ otherwise;
  \item[(2)] $\bigcup_{ \m{w} \in \m{F}(E) }\{\D_\m{w}\}$, where  $\D_\m{w}(e) = \m{w}e$, for every $e\in E$,  $\D_\m{w}(w_0^*) = -\m{w}/w_0$, and $\D_\m{w} = 0$ otherwise;
  \item[(3)] $\bigcup_{ \m{w} \in \m{F}(E)  } \{\D_{\m{w}^*}\}$, where $\D_{\m{w}^*}(e^*) = -[\m{w}e]^*$, for every $e^* \in E^*$,  $\D_{\m{w}^*}(w_0) = [\m{w}/w_0]^*$, and $\D_{\m{w}^*} = 0$ otherwise;
\end{itemize}
and we have $\D_{\m{wh^*}} = \sum_{i=1}^\ell \D_{e_i\m{w}[e_i\m{h}]^*}$ for every $\m{wh^* \in M}$. Finally, if we put $(e^*)^* = e$, for every $e \in E$ and $(a \cdot b)^* = b^*\cdot a^*$ for every $a,b\in E\cup E^*$, then $\D_{\m{w}^*} = -\D^*_\m{w}$ and $\D_{\m{hw^*}} = -\D^*_{\m{wh^*}}$.}

As well known the $K$-module $HH^1(L_K(1,\ell))$ is in fact a Lie algebra with Lie bracket given by $[\D,\D']: = \D\circ \D' - \D'\circ \D$, for every $\D,\D' \in HH^1(L_K(1,\ell))$.

Set $\D_{\m{w}e_1[\m{h}e_1]^*}:= \D_{\m{wh^*}} - \sum_{i=2}^\ell \D_{\m{w}e_i[\m{h}e_i]^*}$ for every $\m{wh^* \in M}$. Then, by the definition of the generators of $HH^1(L_K(1,\ell))$ and straightforward computations, one can get the following formulas:
\begin{align}
  & [\D_\m{a},\D_{\m{b}} ] = \sum_{\m{b} = \m{b}_\lambda \m{b}_\rho} \D_{\m{b}_\lambda \m{a} \m{b}_\rho} - \sum_{\m{a} = \m{a}_\lambda \m{a}_\rho} \D_{\m{a}_\lambda \m{b} \m{a}_\rho},\\
  & [\D_\m{a} , \D_{\m{b}^*}] = \sum_{\substack{ \m{b} = \m{b_\lambda b_\rho} \\ \m{a} = \m{b_\rho a_\rho}  }}  \D_{\m{a_\rho b_\lambda^*}} -   \sum_{\m{a} = \m{a_\lambda b a_\rho} } \D_{ \m{a_\lambda a_\rho} } - \sum_{\substack{ \m{a = a_\lambda a_\rho} \\ \m{b = a_\rho b_\rho} }} \D_{ \m{a_\lambda b_\rho^* } }, \quad | \m{a} | \ge |\m{b}|,\\
  & [\D_{\m{ab^*}} , \D_\m{cd^*}  ] =  \sum_{\m{c = c_\lambda b c_\rho } } \D_\m{ c_\lambda a c_\rho d^* } + \sum_{\substack{ \m{c = c_\lambda b_\lambda} \\ \m{ b = b_\lambda b_\rho }  } } \D_{ \m{ c_\lambda a [d b_\rho]^* } }  - \sum_{\m{d  = d_\lambda a d_\rho } } \D_{\m{ c [d_\lambda b d_\rho ]^*  }} \notag\\
 & \phantom{[\D_{\m{ab^*}} , \D_\m{cd^*}  ] } -\sum_{ \substack{ \m{a = a_\lambda a_\rho} \\ \m{ d = d_\lambda  a_\lambda } } }\D_{ \m{ ca_\rho [d_\lambda b]^*  } } - \sum_{ \m{ a = a_\lambda d a_\rho } } \D_{ \m{ a_\lambda c a_\rho b^* } } - \sum_{\substack{ \m{a  = a_\lambda d_\lambda} \\ \m{ d =  d_\lambda d_\rho  }  } } \D_{ \m{a_\lambda  c [bd_\rho]^* } } \notag \\
 &\phantom{[\D_{\m{ab^*}} , \D_\m{cd^*}  ] }+ \sum_{ \m{b = b_\lambda c b_\rho} } \D_{\m{a [b_\lambda d b_\rho]^* }} + \sum_{\substack{ \m{c  = c_\lambda  c_\rho } \\ \m{b = b_\lambda c_\lambda } }} \D_{\m{ ac_\rho [b_\lambda d]^*  }}.
\end{align}

Let us consider a partial case when $\ell =1$, then the edge $e_1 = e$ is special, and we have $e^*e = ee^* = v$. As we have seen, the corresponding Leavitt path algebra is the Laurent polynomial algebra $K[t,t^{-1}]$, it derivation algebra is the Witt algebra.

Set
$$
e^n = \begin{cases}
  e^n, & n >0,\\
  v, & n=0,\\
  (e^{n})^*, & n <0,
\end{cases}
$$
where $e^n: = \underbrace{e \cdots e }_n$. Thus, we get $\D_{i_1,\ldots i_n}^{j_1,\ldots,j_m} :=\D_{e^n(e^{m})^*} = \D_{e^{n-m}}.$

Let us denote $\D_{i_1,\ldots,i_n}$ by $\D_n$. Using the above formulas, we have for any $m,n \in \mathbb{Z}$
\[
 \Bigl[\D_n,\D_m\Bigr] = (m-n)\D_{n+m},
\]
thus we have obtained the Witt algebra.

\newpage

\section{The Derivation Algebra of the Toeplitz Algebra}
In this section we aim to give a presentation of the Lie algebra of outer derivations of the Toeplitz algebra. We already know (see Example \ref{Toeplitz}) that the Toeplitz algebra $\mathcal{T}$ can be obtained as the Leavitt path algebra $L_K(\mathrm{T})$, where $\mathrm{T}$ is the Toeplitz graph $\xymatrix{\bullet^v \ar@(dl,ul)^a \ar@{->}[r]^b& \bullet^u}$

Set $a^n: = \underbrace{a \cdots a}_n$ and $a^{m*}: = \underbrace{a^* \cdots a^*}_m$, for $n,m \ge 1$ and $a^0  = a^{*0}: =v$. According to Theorem \ref{genthm}, the $K$-module $HH^1(\mathcal{T})$ is generated by the following set of derivations
\[
  \bigcup_{n,m \ge 1} \left\{\D_{a^n}, \D_{a^{*m}}, \D_{aa^*}, \D_{bb^*}, \D_{a^na^{*m}}, \D_{a^nbb^*}, \D_{b[a^mb]^*}, \D_{a^nb[a^mb]^*} \right\},
\]
where $\D_{a^na^{*m}}$ is zero if and only if $a$ is special. Similarly, the derivations $\D_{a^nbb^*}, \D_{b[a^mb]^*}, \D_{a^nb[a^mb]^*}$ are zero if and only if $b$ is special. Further, by (R3), $\D_{bb^*} = 0$. We have:
\begin{itemize}
  \item[Case 1.] The edge $b$ is special.   We have: if $n >m$ we get $\D_{a^na^{*m}} = \D_{a^{n-m+1}a^*}$; if $n<m$ then $\D_{a^na^{*m}} = \D_{aa^{*(m-n+1)}}$; and $\D_{a^na^{*n}} = \D_{aa^*}$. Thus, the $K$-module $HH^1(\mathcal{T})$ is free and generated by $\D_{a^n}$, $\D_{a^{*m}}$, $\D_{a^{n+1}a^*}$, $\D_{aa^{*(m+1)}}$, $n,m \ge 1$, $\D_{aa^*}$.

  \item[Case 2.] The edge $a$ is special. We have: if $n>m$ then $\D_{a^nb[a^mb]^*} = \D_{a^{n-m}bb^*}$; if $n<m$ then $\D_{a^nb[a^mb]^*} = \D_{b[a^{m-n}b]^*}$; $\D_{a^nb[a^nb]^*} = \D_{bb^*}$. Thus, the $K$-module $HH^1(\mathcal{T})$ is free and generated by $\D_{a^n}$, $\D_{a^{*m}}$, $\D_{a^nbb^*}$, $\D_{b[a^nb]^*}$, $n \ge 1$ and $\D_{aa^*}$.
\end{itemize}

Now we focus on the Lie algebra of outer derivations of the Toeplitz algebra. Without loss of generality, let us assume that $b$ is special. We have
\begin{center}
\begin{tabular}{|c|c|c|c|c|}
  \hline
& $a$ & $b$ & $a^*$ & $b^*$\\
\hline
$\D_{a^n}$ & $a^{n+1}$ & $a^nb$ & $-a^{n-1}$ & $0$\\
\hline
$\D_{a^{*n}}$ & $-a^{*(n-1)}$ & $0$ & $a^{*(n+1)}$ & $[a^nb]^*$\\
\hline
$\D_{a^na^*}$ & $a^n$ & $0$ & $-a^{n-1}a^*$ & $0$\\
\hline
$\D_{aa^{*m}}$ & $aa^{*(m-1)}$ & $0$ & $-a^{*m}$ & $0$\\
\hline
$\D_{aa^*}$ & $a$ & $0$ & $-a^*$ & $0$ \\
\hline
\end{tabular}
\end{center}

\begin{theorem}
  The Lie algebra of outer derivations of the Toeplitz algebra is presented by the set of generators: $\D_{a^n}$, $\D_{a^{*m}}$, $\D_{a^{n+1}a^*}$, $\D_{aa^{*(m+1)}}$, $n,m \ge 1$, $\D_{aa^*}$; and the set of relations among those generators:
  \begin{align*}
   &\left[ \D_{aa^*},\D_{a^n} \right] = n\D_{a^n}, \qquad \left[ \D_{a^{*n}}, \D_{aa^*} \right] = n\D_{a^{*n}}, \\
   &\left[\D_{a^n},\D_{a^m}\right] = (m-n)\D_{a^{n+m-1}},\\
   &\left[  \D_{a^{*n}}, \D_{a^{*m}} \right] = (m-n)\D_{a^{*(n+m-1)}},\\
   &\left[ \D_{a^n},\D_{a^{*m}} \right] = \begin{cases} m\D_{aa^{*(m-n+1)}} - n \D_{a^{*(m-n)}}, & m \ge n, \\ -m\D_{a^{n-m+1}a^*} + n \D_{a^{n-m}}, & m \le n, \end{cases}\\
   & \left[ \D_{aa^{*n}} , \D_{aa^{*m}} \right] = (n-m) \D_{aa^{*(m+n-1)}},\\
   &\left[ \D_{a^na^*}, \D_{a^ma^*} \right] = (n-m) \D_{a^{m+n-1}a^*},\\
   &\left[  \D_{aa^{*m}} , \D_{a^na^*}  \right] = \begin{cases} (m+n-2) \D_{aa^{*(m-n+1)}}, & m\ge n, \\ (m+n-2)\D_{a^{m-n+1}a^*}, & m \le n. \end{cases}
  \end{align*}
\end{theorem}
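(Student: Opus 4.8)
The plan is to read the presentation off from the free‐module description of $HH^1(\mathcal{T})$ supplied by Theorem~\ref{genthm} together with the explicit action of each basis derivation. I fix $b$ as the special edge, so $a$ is not special. Applying Theorem~\ref{genthm} to the Toeplitz graph $\mathrm{T}$: relation (R1) kills $\D_{a^nbb^*}$, $\D_{b[a^mb]^*}$ and $\D_{a^nb[a^mb]^*}$, while (R2)--(R3) identify $\D_{a^na^{*m}}$ with $\D_{a^{n-m+1}a^*}$, $\D_{aa^{*(m-n+1)}}$ or $\D_{aa^*}$ according to the sign of $n-m$. Hence $HH^1(\mathcal{T})$ is the free $K$-module on $\{\D_{a^n},\D_{a^{*m}}\}_{n,m\ge1}\cup\{\D_{a^{n+1}a^*},\D_{aa^{*(m+1)}}\}_{n,m\ge1}\cup\{\D_{aa^*},\D_{bb^*}\}$; in particular this set Lie-generates $HH^1(\mathcal{T})$, which is the ``generators'' half of the assertion.

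Next I verify each displayed bracket. Every generator, being the explicit derivation of Corollary~\ref{genofDer}, annihilates the vertices $v,u$, so it is determined by its tabulated values on $a,b,a^*,b^*$. For two generators $\D_1,\D_2$ I evaluate $[\D_1,\D_2]=\D_1\D_2-\D_2\D_1$ on each of $a,b,a^*,b^*$, expanding products by the Leibniz rule (so $\D_{a^n}(a^k)=k\,a^{n+k}$, $\D_{a^{*m}}(a^{*k})=k\,a^{*(m+k)}$, with finite sums for the mixed words $a^ia^{*j}$), reducing every occurrence of $a^*a$ to $v$, and reading off the coordinates in the basis above; this reproduces the displayed commutators (which are recognizably of Witt type on each of the two sub-families $\{\D_{a^n},\D_{a^{*m}},\D_{aa^*}\}$ and $\{\D_{a^na^*},\D_{aa^{*m}},\D_{aa^*}\}$). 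That $\D_{bb^*}$ is central is immediate: it fixes $b$, negates $b^*$ and kills $a,a^*$, while every other generator kills $b^*$ and carries $b$ into $L_K(\mathrm{T})\cdot b$ or kills it.

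To see the relations are complete, I consider the natural map from the Lie algebra $\m{g}$ with the stated presentation to $HH^1(\mathcal{T})$: it is onto by the first step and well defined by the second. For injectivity it is enough that $\m{g}$ be $K$-spanned by its generating symbols, since then those symbols are $K$-linearly independent in $\m{g}$ (their images form a basis of the free module $HH^1(\mathcal{T})$), hence a basis, and the map is an isomorphism. This spanning holds because the listed brackets together with the Jacobi identity rewrite any iterated bracket of generators as a $K$-combination of generators; the brackets mixing the ``$a$-type'' and ``$a^*a$-type'' generators that are absent from the displayed list, such as $[\D_{a^n},\D_{a^{m+1}a^*}]$, are recovered from it by substituting the relation-5 identity $\D_{a^{m+1}a^*}=(m+1)\D_{a^m}-[\D_{a^{m+1}},\D_{a^*}]$ and applying Jacobi, and symmetrically on the $*$-side via Corollary~\ref{*cor}.

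The main obstacle is the bookkeeping in the mixed bracket $[\D_{a^n},\D_{a^{*m}}]$, and likewise in $[\D_{aa^{*m}},\D_{a^na^*}]$: because repeatedly replacing $a^*a$ by $v$ shortens words, one must track which of the four families $\D_{a^j}$, $\D_{a^{*j}}$, $\D_{a^ja^*}$, $\D_{aa^{*j}}$ the result lands in, and this switches as $m$ crosses $n$; one must also isolate the low-exponent degeneracies, where a path collapses to the vertex $v$ and a symbol such as $\D_{a^0}$ or $\D_{a^{*0}}$ must be read as zero. A secondary point to confirm is that no inner-derivation correction intervenes: a priori $[\D_1,\D_2]$ equals the stated combination only modulo inner derivations, so one checks that for the vertex-annihilating representatives the identities hold on the nose — equivalently, that their $K$-span is a Lie subalgebra of $\mathbf{Der}(L_K(\mathrm{T}))$ — which the action table makes transparent.
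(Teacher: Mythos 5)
Your overall strategy --- read the generators off from Theorem \ref{genthm}, then verify each bracket by evaluating on $a,b,a^*,b^*$ with the action table and the Leibniz rule --- is the same as the paper's, which records only the single computation of $[\D_{aa^{*m}},\D_{a^na^*}](a)$ and declares the rest straightforward. The place where your proposal goes wrong is precisely your ``secondary point'': you assert that the identities hold on the nose for the chosen representatives, equivalently that their $K$-span is a Lie subalgebra of $\mathbf{Der}(L_K(\mathrm{T}))$ and that no inner-derivation correction intervenes. This is false, and the paper's own sample computation exhibits the failure: $[\D_{aa^{*m}},\D_{a^na^*}](a)$ contains the terms $2a^2a^{*(m-n+1)}+\cdots+2a^{n-1}a^{*(m-2)}+a^na^{*(m-1)}$, which are values at $a$ of the non-reduced derivations $\D_{a^2a^{*(m-n+2)}},\ldots,\D_{a^na^{*m}}$ and of no listed generator; one returns to the span of the generators only via the identifications $\D_{a^ja^{*k}}\equiv\D_{aa^{*(k-j+1)}}$ supplied by relation (R2) of Theorem \ref{genthm}, and those hold only modulo inner derivations. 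The same already happens for $[\D_a,\D_{a^{*m}}](a)=m\,a^{*(m-2)}+aa^{*(m-1)}$ versus $\bigl(m\D_{aa^{*m}}-\D_{a^{*(m-1)}}\bigr)(a)=a^{*(m-2)}+m\,aa^{*(m-1)}$, the discrepancy being $(m-1)\mathbf{ad}_{a^{*(m-1)}}(a)$. So the verification must be carried out in $HH^1(\mathcal{T})$, not in $\mathbf{Der}(L_K(\mathrm{T}))$, and the ``on the nose'' check you propose would simply fail.

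The completeness step also has a gap. To show that the abstractly presented Lie algebra is $K$-spanned by the generating symbols you must reduce the brackets absent from the displayed list, e.g.\ $[\D_{a^n},\D_{a^{m+1}a^*}]$. Substituting $\D_{a^{m+1}a^*}=(m+1)\D_{a^m}-[\D_{a^{m+1}},\D_{a^*}]$ and applying Jacobi produces, through the term $[\D_{a^{m+1}},[\D_{a^n},\D_{a^*}]]$, the bracket $[\D_{a^{m+1}},\D_{a^na^*}]$, which is of the same missing type with indices $(n,m)\mapsto(m+1,n-1)$; iterating returns to $(n,m)$, so the rewriting is a two-cycle that yields only a consistency identity and never expresses $[\D_{a^n},\D_{a^{m+1}a^*}]$ in terms of the generators. (Trying to solve for these brackets instead requires dividing by integers, which is unavailable over a general commutative ring $K$.) The paper sidesteps completeness entirely, so you are right that something must be said here, but the argument as written does not close it.
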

\begin{proof}
The proof is straightforward.
\end{proof}

\section{Computations}
Throughout this section, $\Gamma$ means a directed row-finite graph $\Gamma = (V,E)$, $K$ means an associative commutative ring with unit, and $L_K(\Gamma)$ means the Leavitt path algebra of the graph $\Gamma$ over $K$.

\begin{lemma}\label{Du}
Let $\mathscr{D}:L_K(\Gamma)\to L_K(\Gamma)$ be a $K$-linear map such that the equalities $\D(v)u + v \D(u) = \delta_{v,u}\D(v)$ hold for every $v,u \in V$. Then $\D(v) = \sum_{\m{wh^*} \in \m{B}}F_{\m{wh^*}}(v) \cdot \mathbf{ad}_{\m{wh^*}}(v).$
\end{lemma}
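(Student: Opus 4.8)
The plan is to exploit that the hypothesis is exactly the Leibniz identity for $\D$ applied to the defining relation $vu=\delta_{v,u}v$, and to read off from it the support of the coefficient functionals. Fix $v\in V$ and expand $\D(v)=\sum_{\m{b}\in\m{B}}F_{\m{b}}(v)\,\m{b}$ in the basis $\m{B}$ of Theorem~\ref{GSBLev}. The one elementary input is that every basis word $\m{b}=\m{wh}^*$ has a distinguished source $s(\m{w})$ and sink $s(\m{h})$, and the defining relations of $L_K(\Gamma)$ give $v\cdot\m{wh}^*=\delta_{v,s(\m{w})}\m{wh}^*$ and $\m{wh}^*\cdot u=\delta_{u,s(\m{h})}\m{wh}^*$; hence $\mathbf{ad}_{\m{wh}^*}(v)=\bigl(\delta_{v,s(\m{h})}-\delta_{v,s(\m{w})}\bigr)\m{wh}^*$, which vanishes unless $v$ is exactly one of the two ends of $\m{wh}^*$, in which case it equals $\pm\m{wh}^*$.

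Next I would substitute the expansions of $\D(v)$ and $\D(u)$ into $\D(v)u+v\D(u)=\delta_{v,u}\D(v)$ and compare the coefficient of each basis word $\m{b}=\m{wh}^*$. For $v\ne u$ this yields $F_{\m{b}}(v)\delta_{u,s(\m{h})}+F_{\m{b}}(u)\delta_{v,s(\m{w})}=0$; taking $u=s(\m{h})$ and letting $v$ range shows $F_{\m{b}}(w)=0$ for every vertex $w\notin\{s(\m{w}),s(\m{h})\}$, and, when $s(\m{w})\ne s(\m{h})$, gives the antisymmetry $F_{\m{b}}(s(\m{w}))=-F_{\m{b}}(s(\m{h}))$. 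For $v=u$, i.e.\ the Leibniz identity for $v^2=v$, the same comparison gives $F_{\m{b}}(v)\bigl(\delta_{v,s(\m{w})}+\delta_{v,s(\m{h})}-1\bigr)=0$, so a nonzero $F_{\m{b}}(v)$ forces $v$ to coincide with precisely one of $s(\m{w})$, $s(\m{h})$. In particular every \emph{diagonal} basis word (those with $s(\m{w})=s(\m{h})$, among them all the vertices) has $F_{\m{b}}\equiv 0$, so $\D(v)$ has no vertex component, and $\D(v)$ is supported on words $\m{wh}^*$ with $s(\m{w})\ne s(\m{h})$ and $v\in\{s(\m{w}),s(\m{h})\}$.

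Finally, for each word $\m{wh}^*$ in that support the scalar $\delta_{v,s(\m{h})}-\delta_{v,s(\m{w})}$ is a unit $\pm 1$, so $\m{wh}^*=\pm\mathbf{ad}_{\m{wh}^*}(v)$; substituting this back into $\D(v)=\sum_{\m{b}}F_{\m{b}}(v)\,\m{b}$ and using the antisymmetry from the $v\ne u$ case to match the two possibilities ($v$ at the source end versus $v$ at the sink end) reassembles $\D(v)$ as $\sum_{\m{wh}^*\in\m{B}}F_{\m{wh}^*}(v)\,\mathbf{ad}_{\m{wh}^*}(v)$. The only step requiring real care, and the one I expect to be the main obstacle, is this sign bookkeeping: one must track, word by word, whether $v$ is the source or the sink of $\m{wh}^*$ (the two contribute opposite signs to $\mathbf{ad}_{\m{wh}^*}(v)$) and invoke $F_{\m{b}}(s(\m{w}))=-F_{\m{b}}(s(\m{h}))$ to absorb it, which is precisely where the explicit description of the words of $\m{M}$, $\m{P}$, $\m{P}^*$ and $V$ in Theorem~\ref{GSBLev}, each with its left and right end, gets used.
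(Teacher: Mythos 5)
Your proposal is correct and follows essentially the same route as the paper: expand $\D(v)$ in the basis $\m{B}$, use $v\,\m{wh}^*=\delta_{v,s(\m{w})}\m{wh}^*$ and $\m{wh}^*v=\delta_{v,s(\m{h})}\m{wh}^*$ to extract from the diagonal case $v=u$ that $F_{\m{wh}^*}(v)$ vanishes unless $v$ is exactly one end of $\m{wh}^*$, and from the off-diagonal case the antisymmetry $F_{\m{wh}^*}(s(\m{w}))=-F_{\m{wh}^*}(s(\m{h}))$, which reassembles the sum as one over $\mathbf{ad}_{\m{wh}^*}(v)$. You are in fact more explicit than the paper about the final sign bookkeeping (the paper leaves the matching of $\pm\m{wh}^*$ against the antisymmetric coefficients implicit), but the argument is the same.
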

\begin{proof} We get $\mathscr{D}(v)v = \sum_{\m{wh^*} \in \m{B}}F_{\m{wh}^*}(v)\m{wh}^*v,$ $v\mathscr{D}(v) = \sum_{\m{wh^*} \in \m{B}}F_{\m{wh}^*}(v)v\m{wh}^*.$ The equality $\D(v)v + v \D(v) = \D(v)$ implies $\sum_{\substack{s(\m{w}) = v\\s(\m{h}) =v}}F_{\m{wh^*}}(v)\m{wh^*} = 0$. Therefore $\mathscr{D}(v) = \sum_{\substack{\m{wh^*} \in \m{B} \\ s(\m{h}) \ne v}}F_{\m{wh}^*}(v) v\m{wh}^* + \sum_{\substack{\m{wh^*}\in \m{B}\\ s(\m{w}) \ne v}}F_{\m{wh}^*}(v)\m{wh}^*v.$

Next, let $v,u$ be different vertices. It follows from preceding discussion that $\mathscr{D}(v)u = \sum_{\m{wh^*} \in\m{B}}F_{\m{wh}^*}(v)v\m{wh}^*u$ and $v\mathscr{D}(u) = \sum_{\m{wh^*}\in \m{B}}F_{\m{wh}^*}(u)v\m{wh}^*u$. To conclude the proof, it remains to note that the equality $\D(v)u + v\D(u) = 0$ implies $F_{\m{wh}^*}(s(\m{w}))=-F_{\m{wh}^*}(s(\m{h}))$.
\end{proof}

\begin{lemma}\label{De}
Let $e \in E$ and $\mathscr{D}:L_K(\Gamma) \to L_K(\Gamma)$ be a $K$-linear map which satisfies Lemma \ref{Du}. If, in addition, the equalities $\mathscr{D}(s(e))e + s(e)\mathscr{D}(e) = \mathscr{D}(e)$, $e\mathscr{D}(r(e)) +\mathscr{D}(e)r(e) = \mathscr{D}(e)$ hold, then
\[
\mathscr{D}(e) =   \sum_{\m{wh^*} \in \m{B}} F_{\m{wh^*}}(s(e)) \cdot \mathbf{ad}_{\m{wh^*}}(e)+ \sum_{\substack{\m{wh^*} \in \m{B} \\s(\m{w})= s(e)\\ s(\m{h}) = r(e)}}F_{\m{wh}^*}(e)\m{wh}^*,
\]
and $\mathscr{D}(v)e + v\mathscr{D}(e) = 0$, for $V \ni v \ne s(e)$, and $e\mathscr{D}(u) +\mathscr{D}(e)u = 0$, for $V \ni u \ne r(e).$
\end{lemma}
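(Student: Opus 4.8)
The plan is to follow the pattern of Lemma~\ref{Du}, working throughout in the $K$-basis $\m{B}$ of Theorem~\ref{GSBLev} and writing $\D(e)=\sum_{\m{wh^*}\in\m{B}}F_{\m{wh^*}}(e)\,\m{wh^*}$. \textbf{Step 1 (isolate the new coefficients).} Left multiplication by a vertex $P$ annihilates every basis monomial $\m{wh^*}$ with $s(\m{w})\ne P$ and fixes those with $s(\m{w})=P$, and dually for right multiplication. Hence $s(e)\D(e)=\sum_{s(\m{w})=s(e)}F_{\m{wh^*}}(e)\,\m{wh^*}$, so the first hypothesis becomes $\D(s(e))e=\D(e)-s(e)\D(e)=\sum_{s(\m{w})\ne s(e)}F_{\m{wh^*}}(e)\,\m{wh^*}$, and symmetrically the second hypothesis gives $e\D(r(e))=\D(e)-\D(e)r(e)=\sum_{s(\m{h})\ne r(e)}F_{\m{wh^*}}(e)\,\m{wh^*}$. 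Feeding the second identity into $\D(e)=s(e)\D(e)+\D(s(e))e$ and using $s(e)e=e=e\,r(e)$ (an inclusion--exclusion over the splitting of $\m{B}$ by the ordered pair of end vertices of a monomial) yields
\[
\D(e)=s(e)\D(e)r(e)+\D(s(e))e+e\D(r(e)),\qquad s(e)\D(e)r(e)=\sum_{\substack{s(\m{w})=s(e)\\ s(\m{h})=r(e)}}F_{\m{wh^*}}(e)\,\m{wh^*},
\]
and the last sum is exactly the second term of the asserted formula.

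\textbf{Step 2 (assemble the inner part).} It remains to identify $\D(s(e))e+e\D(r(e))$ with $\sum_{\m{wh^*}\in\m{B}}F_{\m{wh^*}}(s(e))\,\mathbf{ad}_{\m{wh^*}}(e)$. Expanding $\mathbf{ad}_{\m{wh^*}}(e)=\m{wh^*}e-e\m{wh^*}$, the $\m{wh^*}e$ part is literally $\D(s(e))e$. For the $e\m{wh^*}$ part I would invoke the conclusion of Lemma~\ref{Du}: a coefficient $F_{\m{wh^*}}(P)$ can be nonzero only if $P$ is one of the two end vertices $s(\m{w}),s(\m{h})$ of $\m{wh^*}$, and then $F_{\m{wh^*}}(s(\m{w}))=-F_{\m{wh^*}}(s(\m{h}))$. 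Since $e\m{wh^*}\ne 0$ forces $s(\m{w})=r(e)$, a short case analysis on the reduced form of $e\m{wh^*}$ turns $-\sum F_{\m{wh^*}}(s(e))\,e\m{wh^*}$ into $\sum F_{\m{wh^*}}(r(e))\,e\m{wh^*}=e\D(r(e))$.

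\textbf{Step 3 (the auxiliary identities).} These fall out of the decomposition of $\D(e)$ from Step 1. For $V\ni v\ne s(e)$, multiplying that decomposition on the left by $v$ kills $s(e)\D(e)r(e)$ (its monomials begin at $s(e)$) and $ve$, so $v\D(e)=v\D(s(e))e$; the vertex hypothesis gives $\D(v)s(e)+v\D(s(e))=0$, whence $v\D(e)=-\D(v)s(e)e=-\D(v)e$, i.e.\ $\D(v)e+v\D(e)=0$. The identity $e\D(u)+\D(e)u=0$ for $V\ni u\ne r(e)$ is the mirror image, obtained by right multiplication together with $\D(r(e))u+r(e)\D(u)=0$. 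I expect the matching of the $e\m{wh^*}$ terms in Step~2 to be the only real obstacle --- it is the one point where the full description of $\D$ on vertices (not merely the two displayed hypotheses) enters, and it requires the sign relations among the $F_{\m{wh^*}}$ from Lemma~\ref{Du}; everything else is routine manipulation of the defining relations.
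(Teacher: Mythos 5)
Your Steps 1 and 3 are sound, and Step 1 is actually cleaner than the paper's own route: the paper reaches the decomposition $\D(e)=\D(s(e))e+e\D(r(e))+s(e)\D(e)r(e)$ by substituting the $\mathbf{ad}$-expression for $\D$ on vertices into both Leibniz identities, whereas you get it by partitioning $\m{B}$ according to the pair of end vertices $(s(\m{w}),s(\m{h}))$ of a monomial (your ``inclusion--exclusion'' needs the small observation that the class $s(\m{w})\ne s(e)$, $s(\m{h})\ne r(e)$ carries zero coefficients, which does follow since $\D(s(e))e\cdot r(e)=\D(s(e))e$). Your Step 3 is also essentially the paper's. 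You have, moreover, correctly located the crux in Step 2: matching $-\sum F_{\m{wh^*}}(s(e))\,e\m{wh^*}$ with $e\D(r(e))=\sum F_{\m{wh^*}}(r(e))\,e\m{wh^*}$.

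That matching is a genuine gap, and the promised ``short case analysis'' cannot close it. If $e\m{wh^*}\ne 0$ then $s(\m{w})=r(e)$; when $s(\m{h})=s(e)$ the sign relation of Lemma \ref{Du} indeed gives $-F_{\m{wh^*}}(s(e))=F_{\m{wh^*}}(r(e))$, but when $s(\m{h})\ne s(e)$ (and $s(e)\ne r(e)$) Lemma \ref{Du} forces $F_{\m{wh^*}}(s(e))=0$ while $F_{\m{wh^*}}(r(e))=F_{\m{wh^*}}(s(\m{w}))$ is unconstrained, so the two sums differ by $\sum F_{\m{wh^*}}(s(\m{w}))\,e\m{wh^*}$ over such monomials. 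This is not a removable defect of your argument: take $\Gamma=A_3$ ($v_1\xrightarrow{e}v_2\xrightarrow{f}v_3$) and $\D=\mathbf{ad}_f$. Then $\D(v_1)=0$, so $\sum F_{\m{wh^*}}(s(e))\mathbf{ad}_{\m{wh^*}}(e)=0$, and $s(e)\D(e)r(e)=v_1(-ef)v_2=0$, yet $\D(e)=fe-ef=-ef\ne 0$; the missing term is exactly $e\D(r(e))=e\D(v_2)=-ef$, carried by the monomial $f$ with $s(\m{w})=v_2=r(e)$ and $s(\m{h})=v_3\ne v_1$. Hence the identity you are trying to establish in Step 2 --- and with it the displayed formula of the lemma, read literally with $F_{\m{wh^*}}(s(e))$ the coefficients of $\D(s(e))$ --- is false; the coefficient attached to $\mathbf{ad}_{\m{wh^*}}$ must depend on the monomial (e.g.\ $F_{\m{wh^*}}(s(\m{h}))$), not uniformly on $s(e)$. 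The paper's own proof asserts the same matching after ``recalling $F_{\m{wh^*}}(s(\m{w}))=-F_{\m{wh^*}}(s(\m{h}))$'' and skips exactly this case, so you have reproduced its gap rather than repaired it; what your argument (and the paper's) actually proves is only the intermediate identity $\D(e)=\D(s(e))e+e\D(r(e))+s(e)\D(e)r(e)$ together with the two auxiliary vanishing statements.
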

\begin{proof}
Let $\D(e) = \D(s(e))e + s(e)\D(e) = \D(e)r(e) + e\D(r(e))$. By Lemma \ref{Du},
\begin{eqnarray*}
  \D(e) &=& \D(s(e))e + s(e)\D(e) \\
  &=& \sum_{\m{wh^*}\in \m{B}} \Bigl( F_{\m{wh^*}}(s(e)) \mathbf{ad}_{\m{wh^*}}(s(e)) \cdot e - F_{\m{wh^*}}(e) e \m{wh^*} \Bigr),\\
  e\D(r(e)) &=& \sum_{\m{wh^*} \in \m{B}} F_{\m{wh^*}}(r(e))e\cdot \mathbf{ad}_{\m{wh^*}}(r(e)),
\end{eqnarray*}
so that
\begin{eqnarray*}
  \D(e) &=& e\D(r(e)) +  \D(e)r(e) \\
  &=& \sum_{\m{wh^*} \in \m{B}} \Bigl(F_{\m{wh^*}}(s(e)) \bigl(\m{wh^*}e - s(e)\m{wh^*}e \bigr) + F_{\m{wh^*}}(e) \cdot s(e)\m{wh^*}r(e) \Bigr)\\
  && + \sum_{\m{wh^*}\in \m{B}} F_{\m{wh^*}}(e)\Bigl(e\m{wh^*}r(e) - e r(e) \m{wh^*} \Bigr).
\end{eqnarray*}

Now if we recall $F_{\m{wh}^*}(s(\m{w}))=-F_{\m{wh}^*}(s(\m{h}))$ (see the proof of Lemma \ref{Du}), we get $ \mathscr{D}(e) =   \sum_{\m{wh^*} \in \m{B}} F_{\m{wh^*}}(s(e)) \cdot \mathbf{ad}_{\m{wh^*}}(e)+ \sum_{\substack{\m{wh^*} \in \m{B} \\s(\m{w})= s(e)\\ s(\m{h}) = r(e)}}F_{\m{wh}^*}(e)\m{wh}^*.$

Let $v \ne s(e)$. We have
\begin{eqnarray*}
  \D(v)e &=& \sum_{\m{wh^*} \in \m{B}} \mathbf{ad}_{\m{wh^*}}(v) e =  \sum_{\m{wh^*} \in \m{B}} F_{\m{wh^*}}(v) \Bigl(\m{wh^*}v - v\m{wh^*} \Bigr)e\\
   &=& - \sum_{\m{wh^*} \in \m{B}}F_{\m{wh^*}}(v) v \m{wh^*}e = - \sum_{\m{wh^*} \in \m{B}} F_{\m{wh^*}}(r(\m{h}) v\m{wh^*}e,\\
   v\D(e) &=& \sum_{\m{wh^*} \in \m{B}}F_{\m{wh^*}}(s(e)) v \cdot \mathbf{ad}_{\m{wh^*}}(e) = \sum_{\m{wh^*} \in \m{B}} F_{\m{wh^*}}(s(e)) v\m{wh^*} e\\
   &=& \sum_{\m{wh^*} \in \m{B}}F_{\m{wh^*}}(r(\m{h})) v \m{wh^*}e,
\end{eqnarray*}
therefore $\D(v)e + v\D(e) = 0$. Similarly, one can prove that $\D(e)u + e\D(u) = 0$, for $V \ni u \ne r(e)$. This completes the proof.
\end{proof}

One can prove, similarly as above, that the following statement holds.

\begin{lemma}\label{De*}
  Let $e \in E$ $\mathscr{D}:L_K(\Gamma) \to L_K(\Gamma)$ be a $K$-linear map which satisfies Lemma \ref{Du}. If the map $\D$ also satisfies the following equations
  \begin{align*}
    &\mathscr{D}(e^*)s(e)+e^*\mathscr{D}(s(e)) = \mathscr{D}(e^*), \\
    &\mathscr{D}(r(e))e^*+r(e)\mathscr{D}(e^*) = \mathscr{D}(e^*),
  \end{align*}
then $ \mathscr{D}(e^*) =   \sum_{\m{wh^*} \in \m{B}} F_{\m{wh^*}}(s(e)) \cdot \mathbf{ad}_{\m{wh^*}}(e^*)+ \sum_{\substack{\m{wh^*} \in \m{B} \\s(\m{w})= r(e)\\ s(\m{h}) = s(e)}}F_{\m{wh}^*}(e^*)\m{wh}^*$, $\mathscr{D}(v)e^* + v\mathscr{D}(e^*) = 0$, for $V \ni v \ne r(e)$, and $e^*\mathscr{D}(u) +\mathscr{D}(e^*)u = 0,$ for $V \ni u \ne s(e)$.
\end{lemma}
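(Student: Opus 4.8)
The plan is to mirror the proof of Lemma~\ref{De} line for line, interchanging the roles of $s(e)$ and $r(e)$ and replacing $e$ by $e^*$ throughout. First I would observe that the two hypotheses are nothing but the Leibniz expansions of the defining relations $r(e)e^* = e^*$ and $e^*s(e) = e^*$, so that $\D(e^*) = \D(r(e))e^* + r(e)\D(e^*)$ and $\D(e^*) = \D(e^*)s(e) + e^*\D(s(e))$. Using Lemma~\ref{Du} to write $\D(s(e)) = \sum_{\m{wh^*}\in\m{B}}F_{\m{wh^*}}(s(e))\,\mathbf{ad}_{\m{wh^*}}(s(e))$ and likewise for $\D(r(e))$, and substituting $\D(e^*) = \sum_{\m{wh^*}\in\m{B}}F_{\m{wh^*}}(e^*)\m{wh^*}$, I would carry out the same bookkeeping of the monomials $e^*\m{wh^*}$, $\m{wh^*}s(e)$, $r(e)\m{wh^*}$ and $\m{wh^*}e^*$ that appears in Lemma~\ref{De}, simplifying products by the defining relations.

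Then, invoking the identity $F_{\m{wh^*}}(s(\m{w})) = -F_{\m{wh^*}}(s(\m{h}))$ proved inside Lemma~\ref{Du}, the two resulting expansions of $\D(e^*)$ collapse into one: the $\mathbf{ad}_{\m{wh^*}}(e^*)$ part is contributed by the terms originating from $\D(s(e))$ and $\D(r(e))$ (which agree after applying the sign identity), while the only genuinely new basis monomials that survive are those $\m{wh^*}$ with $s(\m{w}) = r(e)$ and $s(\m{h}) = s(e)$ --- the ``transpose'' of the index set in Lemma~\ref{De}, which reflects that $e^*$ runs from $r(e)$ to $s(e)$. This gives
\[
\D(e^*) = \sum_{\m{wh^*}\in\m{B}}F_{\m{wh^*}}(s(e))\,\mathbf{ad}_{\m{wh^*}}(e^*) + \sum_{\substack{\m{wh^*}\in\m{B}\\ s(\m{w})=r(e)\\ s(\m{h})=s(e)}}F_{\m{wh^*}}(e^*)\m{wh^*}.
\]

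For the orthogonality statements I would repeat the closing computation of Lemma~\ref{De} with $e$ replaced by $e^*$: for a vertex $v\neq r(e)$ the relation $ve^* = \delta_{v,r(e)}e^* = 0$ kills the $\m{wh^*}ve^*$ terms in $\D(v)e^*$, leaving $\D(v)e^* = -\sum_{\m{wh^*}}F_{\m{wh^*}}(v)\,v\m{wh^*}e^*$, which cancels $v\D(e^*) = \sum_{\m{wh^*}}F_{\m{wh^*}}(s(e))\,v\,\mathbf{ad}_{\m{wh^*}}(e^*)$ once the coefficients are matched via the defining relations and the sign identity; the argument for $e^*\D(u) + \D(e^*)u = 0$ with $u\neq s(e)$ is symmetric.

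The only thing requiring attention --- and the (very mild) main obstacle --- is the consistent swap of sources and ranges: every $s(e)$ in the proof of Lemma~\ref{De} becomes $r(e)$ here and conversely, and one must make sure that the relations $r(e)e^* = e^*s(e) = e^*$ are the ones being invoked rather than $s(e)e = er(e) = e$. With that substitution made throughout, no new idea is needed, which is precisely what ``similarly as above'' in the statement refers to.
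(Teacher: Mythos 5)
Your proposal is correct and coincides with the paper's intent: the paper gives no separate argument for Lemma \ref{De*} beyond the remark ``one can prove, similarly as above,'' and your line-by-line adaptation of the proof of Lemma \ref{De} --- using the relations $r(e)e^*=e^*s(e)=e^*$ in place of $s(e)e=er(e)=e$, expanding $\D(s(e))$ and $\D(r(e))$ via Lemma \ref{Du}, and invoking $F_{\m{wh^*}}(s(\m{w}))=-F_{\m{wh^*}}(s(\m{h}))$ to collapse the two expansions --- is exactly the intended argument. The surviving index set $s(\m{w})=r(e)$, $s(\m{h})=s(e)$ and the vanishing statements for $v\ne r(e)$, $u\ne s(e)$ come out as you describe.
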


\begin{lemma}\label{ef}
  Let $e,f\in E$ be two edges and $\D:L_K(\Gamma) \to L_K(\Gamma)$ be a $K$-linear map. Then if the linear map $\mathscr{D}$ satisfies Lemma \ref{De} and Lemma \ref{De*}, then $\mathscr{D}(e)f + e\mathscr{D}(f) = 0$, $\mathscr{D}(e^*)f^* + e^*\mathscr{D}(f^*) = 0$, $\mathscr{D}(e)f^* + e\mathscr{D}(f^*) = 0,$ whenever $ef =0$, $e^*f^* = 0$, $ef^* = 0$, respectively.
 \end{lemma}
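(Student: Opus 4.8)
The plan is to deduce all three identities purely from relations (1)--(2) of the Leavitt path algebra together with the auxiliary vanishing identities furnished by Lemmas~\ref{De} and~\ref{De*}; no new computation with the basis $\m{B}$ is needed. The guiding observation is that $ef=0$ forces $r(e)\ne s(f)$, that $e^*f^*=0$ forces $r(f)\ne s(e)$, and that $ef^*=0$ forces $r(e)\ne r(f)$ --- exactly the conditions defining the elements of $\mathbf{GS}_4$ --- so that after inserting the appropriate range/source idempotents and using $vu=\delta_{v,u}v$ the ``unwanted'' summands drop out.

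Concretely, for the first identity I would start from the hypothesis $\mathscr{D}(e)=\mathscr{D}(e)r(e)+e\mathscr{D}(r(e))$ of Lemma~\ref{De}, multiply on the right by $f=s(f)f$, and use $r(e)f=0$ to get $\mathscr{D}(e)f=e\mathscr{D}(r(e))f$. On the other side I would write $e\mathscr{D}(f)=e\,r(e)\mathscr{D}(f)$ and apply the conclusion of Lemma~\ref{De} \emph{for the edge $f$}, namely $\mathscr{D}(v)f+v\mathscr{D}(f)=0$ for $v\ne s(f)$, with $v=r(e)$; this yields $r(e)\mathscr{D}(f)=-\mathscr{D}(r(e))f$, hence $e\mathscr{D}(f)=-e\mathscr{D}(r(e))f$, and the two contributions cancel.

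For the second identity ($e^*f^*=0$, i.e.\ $r(f)\ne s(e)$) I would run the mirror argument with $e^*$ in place of $e$: from the hypothesis $\mathscr{D}(e^*)=\mathscr{D}(e^*)s(e)+e^*\mathscr{D}(s(e))$ of Lemma~\ref{De*} and $s(e)f^*=0$ one gets $\mathscr{D}(e^*)f^*=e^*\mathscr{D}(s(e))f^*$; and from $e^*=e^*s(e)$ together with the conclusion $\mathscr{D}(v)f^*+v\mathscr{D}(f^*)=0$ of Lemma~\ref{De*} applied to $f^*$ (with $v=s(e)\ne r(f)$) one gets $e^*\mathscr{D}(f^*)=-e^*\mathscr{D}(s(e))f^*$, again cancelling. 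The third identity ($ef^*=0$, i.e.\ $r(e)\ne r(f)$) is handled the same way, combining the decomposition $\mathscr{D}(e)=\mathscr{D}(e)r(e)+e\mathscr{D}(r(e))$ from Lemma~\ref{De} with the conclusion $\mathscr{D}(v)f^*+v\mathscr{D}(f^*)=0$ from Lemma~\ref{De*} for $f^*$ (with $v=r(e)$), producing $\mathscr{D}(e)f^*=e\mathscr{D}(r(e))f^*=-e\mathscr{D}(f^*)$.

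I do not anticipate a genuine obstacle; the three cases are formally identical up to applying the $*$-operation, so after writing out the first in full I would merely list the substitutions needed for the other two. The only point requiring care is bookkeeping: in each case one must pick the correct version of the auxiliary identity (left- versus right-multiplication, and for which of $e$, $e^*$, $f$, $f^*$ it is applied) and check that the relevant vertex inequality really does follow from the hypothesis via the description of $\mathbf{GS}_4$.
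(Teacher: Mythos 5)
Your proposal is correct and follows essentially the same route as the paper, which likewise deduces the three identities from the idempotent decomposition $e=er(e)$ (resp.\ $e^*=e^*s(e)$) together with the vertex-annihilation conclusions $\mathscr{D}(v)f+v\mathscr{D}(f)=0$ of Lemmas~\ref{De} and~\ref{De*}; you have merely written out in full the cancellation the paper leaves implicit.
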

\begin{proof}
Indeed, since $ef=0$, then $r(e) \ne s(f)$. Using the equality $e = r(e)e$ and Lemma \ref{De}, we deduce the first statement. In a similar way, one can easy to obtain the second and the third statements.
\end{proof}

Thus, one can easy see that if a $K$-linear map $\D:L_K(\Gamma) \to L_K(\Gamma)$ satisfies Lemma \ref{De} and Lemma \ref{De*} then $\D = \mathscr{J} + \widehat{\D}$, where $\mathscr{J}$ is an inner derivation of $L_K(\Gamma)$ and $\widehat{\D}$ is a $K$-linear map is defined as follows $\widehat{\D}(v) =0$, $\widehat{\D}(e) = \sum_{\substack{\m{wh^*} \in \m{B} \\ s(\m{w}) = s(e)\\ s(\m{h}) = r(e) }}F_{\m{wh^*}}(e)$ and $\widehat{\D}(e^*) = \sum_{\substack{\m{wh^*} \in \m{B}\\ s(\m{w}) = r(e)\\ s(\m{h}) = s(e) }}F_{\m{wh^*}}(e^*)$, for every $v\in V$, $e \in E$. Our next aim is to know when $\widehat{\D}$ is a derivation.

\begin{lemma}\label{e*e}
Let $\widehat{\D}:L_K(\Gamma) \to L_K(\Gamma)$ be a $K$-linear map as above. Suppose $\widehat{\D}(e^*)f + e^*\widehat{\D}(f) = \delta_{e,f}\widehat{\D}(r(e))$, for every $e,f\in E$; then
\begin{align*}
  & F_{\m{p}}(e^*) + F_{\m{p}ff^*}(e^*) + F_{e\m{p}f}(f) = 0, \\
  & F_{\m{p}^*}(e) + F_{f[\m{p}f]^*}(e) + F_{[ e\m{p}f ]^*}(f^*) = 0,  \\
  & F_{\m{w}[f\m{h}]^*}(e^*) + F_{e\m{wh^*}}(f) = 0,
\end{align*}
whenever $s(e) = s(f)$, where $\m{p} \in \m{P} \cup V$ such that $e\m{p}f \ne 0$, $\m{wh^*} \in \m{B}$ and $h_z \ne \m{w}$. The last equation in the case $\m{w} = \m{h} \in V$ holds if and only if $e =f$.
\end{lemma}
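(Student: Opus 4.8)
The plan is to substitute the explicit expansions of $\widehat{\D}(e^*)$ and $\widehat{\D}(f)$ given above, to write $\widehat{\D}(e^*)f + e^*\widehat{\D}(f)$ as a $K$-linear combination of the basis $\m{B}$ of Theorem~\ref{GSBLev}, and then to read off the three displayed identities by comparing, word by word, the coefficient of each basis monomial on the two sides of the hypothesis. Since $\widehat{\D}$ kills every vertex, the right-hand side $\delta_{e,f}\widehat{\D}(r(e))$ is identically zero, so every basis coefficient of $\widehat{\D}(e^*)f + e^*\widehat{\D}(f)$ must vanish.

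First I would compute the two products. Writing a mixed word as $\m{wh}^*=w_0\cdots w_m\,h_z^*\cdots h_0^*$ and applying relation~(3) at the rightmost letter, $h_0^*f=\delta_{h_0,f}\,r(h_0)$, so right multiplication by $f$ annihilates every word in the support of $\widehat{\D}(e^*)$ except those with $h_0=f$ (and the degenerate ones where $\m{h}$ is trivial), and on the survivors it gives $\m{wh}^*f=\m{w}[\m{h}/h_0]^*$. Since then $s(h_0)=s(f)=s(e)$, this is exactly why the hypothesis carries content only when $s(e)=s(f)$; symmetrically, in $e^*\widehat{\D}(f)$ left multiplication by $e^*$ forces $w_0=e$ and yields $[\m{w}/w_0]\m{h}^*$. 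Because every word of $\m{B}$ already satisfies the condition that its terminal edges be distinct or equal-but-not-special, these cancellations never create a non-reduced word (the one junction that could produce $gg^*$ with $g$ special is already forbidden in the support), so no further Gr\"obner--Shirshov rewriting is needed and the outputs are $K$-combinations of basis words.

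Next I would sort the surviving terms by the shape of the word they land on. The coefficient of a path $\m{p}f$ collects $F_{\m{p}}(e^*)$ from the path term $\m{p}\cdot f$, $F_{\m{p}ff^*}(e^*)$ from the mixed term $\m{p}ff^*\cdot f=\m{p}f\,(f^*f)=\m{p}f$, and $F_{e\m{p}f}(f)$ from $e^*\cdot(e\m{p}f)=(e^*e)\m{p}f=\m{p}f$; setting it to zero yields the first identity whenever $e\m{p}f\ne0$. The coefficient of a genuinely mixed word $\m{wh}^*\in\m{M}$ collects only $F_{\m{w}[f\m{h}]^*}(e^*)$ from $\widehat{\D}(e^*)f$ (taking the inner path equal to $f\m{h}$ above) and $F_{e\m{wh}^*}(f)$ from $e^*\widehat{\D}(f)$ (taking the inner path equal to $e\m{w}$), which is the third identity. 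For the second identity I would rerun the whole computation on the instance of the hypothesis for the pair $(f,e)$, namely $\widehat{\D}(f^*)e+f^*\widehat{\D}(e)=0$, and extract the coefficient of a co-path $[\m{p}f]^*$; equivalently one applies the involution $*$ to the identity already found.

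The step I expect to be the main obstacle is the bookkeeping of the degeneracies: when $\m{p}$ or $\m{w}$ shrinks to a vertex; when $h_0=f$ but $\m{h}$ has length one, so that $\m{wh}^*f$ telescopes past a co-path down to a vertex; and the interaction of all this with the special-edge function $\vartheta$ encoded in $\m{B}$ (which, for instance, decides whether the term $F_{\m{p}ff^*}(e^*)$ above is present at all). In particular, the coefficient of a single vertex is what produces the clause about $\m{w}=\m{h}\in V$: there the word $\m{w}[f\m{h}]^*$ degenerates to $f^*$ and $e\m{wh}^*$ to $e$, both of which are nonzero only when $r(e)=r(f)$, and in that degenerate form the identity is the $e=f$ instance $F_{e^*}(e^*)+F_e(e)=0$. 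Tracking precisely which words survive each collapse, with the right sign and coefficient, is the routine but delicate part of the verification.
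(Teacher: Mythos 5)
Your proposal is correct and follows essentially the same route as the paper: expand $\widehat{\D}(e^*)f+e^*\widehat{\D}(f)$ in the basis $\m{B}$, observe that the right-hand side vanishes because $\widehat{\D}$ kills vertices, and read off the three identities as the vanishing of the coefficients of the path-shaped, co-path-shaped, and mixed basis words (the paper extracts the second identity from the co-path coefficients of the same product, which is the $e\leftrightarrow f$ relabelling you propose). Your accounting of the degenerate collapses, including the vertex term responsible for the $\m{w}=\m{h}\in V$ clause, is consistent with the paper's computation.
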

\begin{proof}
Indeed, we have
\begin{multline*}
  \widehat{\D}(e^*)f + e^* \widehat{\D}(f) = \sum_{\substack{\m{wh^*} \in \m{B} \\ s(\m{w}) = r(e) \\ s(\m{h}) = s(e)}}F_{\m{wh^*}}(e^*) \m{wh^*}f + \sum_{\substack{\m{wh^*} \in \m{B} \\ s(\m{w}) = s(f) \\ s(\m{h}) = r(f)}}F_{\m{wh^*}}(f) e^*\m{wh^*}\\
  = \delta_{s(e),s(f)}\sum_{\substack{\m{w} \in \m{P} \cup V\\s(\m{w}) = r(e)}}\Bigl( F_{\m{w}}(e^*) + F_{\m{w}ff^*}(e^*) + F_{e\m{w}f}(f) \Bigr)\m{w}f \\
   + \delta_{s(e),s(f)}\sum_{\substack{ \m{h}^*\in\m{P}^* \cup V \\ s(\m{h}) = s(e)   }}  \Bigl(  F_{[ f\m{h}e ]^*}(e^*) + F_{ e[\m{h}e]^* }(f)  + F_{ \m{h}^* } (f)   \Bigr) [e\m{h}]^*\\
  + \delta_{s(e),s(f)}\sum_{\substack{\m{wh^*} \in \m{B} \\ s(\m{w}) = r(e) \\ s(\m{h}) = r(f)}} \Bigl( F_{\m{w}[f\m{h}]^*}(e^*) + F_{e\m{wh^*}}(f) \Bigr) \m{wh^*},
\end{multline*}
and using the equalities $\mathscr{\widehat{D}}(e^*)f + e^*\mathscr{\widehat{D}}(f) = \delta_{e,f}\mathscr{\widehat{D}}(r(e))$, $\widehat{\D}(r(e)) = 0$ we complete the proof.
\end{proof}

\begin{lemma}\label{sumee*}
   Let $e_1,\ldots, e_\ell \in E$ be a finite number of edges with a common source $v$. Let $\widehat{\D}:L_K(\Gamma) \to L_K(\Gamma)$ be a $K$-linear map which satisfies Lemma \ref{e*e}. Suppose $\sum_{i=1}^\ell\widehat{\D}(e_i)e_i^* + e_i\widehat{\D}(e_i^*)= \widehat{\D}(v)$; then $F_{e_i}(e_j) + F_{e_j^*}(e_i^*)= 0$ whenever $r(e_j) =r(e_i)$, $1 \le i,j \le \ell$.
\end{lemma}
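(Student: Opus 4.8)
The plan is to unfold the hypothesis in the $K$-basis $\m{B}$ of Theorem~\ref{GSBLev} and then compare coefficients of a handful of basis monomials. Renumber so that $e_1=\vartheta(v)$ is the unique special edge with source $v$ (the list $e_1,\dots,e_\ell$ exhausts $s^{-1}(v)$, since $\sum_i e_ie_i^{*}=v$). As $\widehat{\D}(v)=0$ by construction, the hypothesis is the identity
\[
\sum_{i=1}^{\ell}\bigl(\widehat{\D}(e_i)\,e_i^{*}+e_i\,\widehat{\D}(e_i^{*})\bigr)=0
\qquad\text{in } L_K(\Gamma).
\]
Substituting the formulas for $\widehat{\D}(e_i)$ and $\widehat{\D}(e_i^{*})$ and using $\m{wh^{*}}e_i^{*}=\m{w}[e_i\m{h}]^{*}$ and $e_i\m{wh^{*}}=[e_i\m{w}]\m{h^{*}}$ (the source conditions attached to the two sums are exactly what makes these products nonzero), this rewrites as
\[
\sum_{i=1}^{\ell}\Bigl(\sum_{\substack{\m{wh^{*}}\in\m{B}\\ s(\m{w})=v,\ s(\m{h})=r(e_i)}}F_{\m{wh^{*}}}(e_i)\,\m{w}[e_i\m{h}]^{*}\;+\;\sum_{\substack{\m{wh^{*}}\in\m{B}\\ s(\m{w})=r(e_i),\ s(\m{h})=v}}F_{\m{wh^{*}}}(e_i^{*})\,[e_i\m{w}]\m{h^{*}}\Bigr)=0.
\]

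The next step is to determine which of the words $\m{w}[e_i\m{h}]^{*}$ and $[e_i\m{w}]\m{h^{*}}$ fail to lie in $\m{B}$. By Theorem~\ref{GSBLev} the only obstruction is a factor $e_1e_1^{*}$, which must be rewritten by relation~(4) as $e_1e_1^{*}=v-\sum_{k\ge 2}e_ke_k^{*}$. A finite case check — on whether $\m{w}$ or $\m{h}$ is trivial, on whether $e_i$ is special, and on where the factor $e_1e_1^{*}$ sits — shows that the only rewritings producing a vertex, or a word of the form $e_je_i^{*}$, come from exactly two summands: the summand of $\widehat{\D}(e_1)e_1^{*}$ indexed by $\m{w}=e_1,\ \m{h}=r(e_1)$ (for which $\m{wh^{*}}=e_1$), contributing $F_{e_1}(e_1)\bigl(v-\sum_{k\ge2}e_ke_k^{*}\bigr)$; and the summand of $e_1\widehat{\D}(e_1^{*})$ indexed by $\m{w}=r(e_1),\ \m{h}=e_1$ (for which $\m{wh^{*}}=e_1^{*}$), contributing $F_{e_1^{*}}(e_1^{*})\bigl(v-\sum_{k\ge2}e_ke_k^{*}\bigr)$. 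Every other word that gets rewritten lands only on monomials that are neither a vertex nor of the form $e_je_i^{*}$.

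With this in hand one reads off coefficients in the free $K$-module on $\m{B}$. Fix $i\ne j$ with $r(e_i)=r(e_j)$, so that $e_je_i^{*}\in\m{M}$; the only summands of the displayed identity contributing to $e_je_i^{*}$ are the term $F_{e_j}(e_i)\,e_je_i^{*}$ of $\widehat{\D}(e_i)e_i^{*}$ (from $\m{w}=e_j$, $\m{h}=r(e_i)$, whence $\m{wh^{*}}=e_j$) and the term $F_{e_i^{*}}(e_j^{*})\,e_je_i^{*}$ of $e_j\widehat{\D}(e_j^{*})$ (from $\m{w}=r(e_j)$, $\m{h}=e_i$, whence $\m{wh^{*}}=e_i^{*}$); the right-hand side being zero, this gives $F_{e_j}(e_i)+F_{e_i^{*}}(e_j^{*})=0$, which is the claim after interchanging $i$ and $j$. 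For $i=j$ with $e_i\ne e_1$, the word $e_ie_i^{*}$ also lies in $\m{M}$; comparing its coefficient gives $F_{e_i}(e_i)+F_{e_i^{*}}(e_i^{*})-F_{e_1}(e_1)-F_{e_1^{*}}(e_1^{*})=0$ (the last two terms coming from the $-\sum_{k\ge2}e_ke_k^{*}$ parts of the two rewritten summands), while comparing the coefficient of the vertex $v$ gives $F_{e_1}(e_1)+F_{e_1^{*}}(e_1^{*})=0$; adding the two relations yields $F_{e_i}(e_i)+F_{e_i^{*}}(e_i^{*})=0$, and the second relation already covers the last case $i=j$, $e_i=e_1$.

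The one genuinely delicate point is the bookkeeping in the middle step: one must be sure that no non-reduced word carrying an $e_1e_1^{*}$ factor, once relation~(4) is applied (possibly several times), slips an extra contribution into $v$ or into the monomials $e_je_i^{*}$ beyond the two summands singled out above, so that the three coefficient comparisons are exhaustive. Everything else is a routine manipulation of $K$-linear combinations.
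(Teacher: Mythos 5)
Your proposal is correct and follows essentially the same route as the paper's own proof: expand the hypothesis in the basis $\m{B}$, rewrite the reducible words via $e_1e_1^{*}=v-\sum_{k\ge 2}e_ke_k^{*}$, and read off the coefficients of $v$, of $e_ke_k^{*}$, and of $e_je_i^{*}$ ($i\ne j$). The bookkeeping point you flag does check out, and your combination of the $v$-coefficient with the $e_ke_k^{*}$-coefficient to settle the diagonal case is exactly what the paper's two displayed relations amount to.
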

\begin{proof}
Without loss of generality, let us assume that $e_1$ is special. We have
\begin{multline*}
 \sum_{i=1}^\ell\widehat{\D}(e_i)e_i^* + e_i\widehat{\D}(e_i^*) = \sum_{\substack{\m{w} \in \m{P} \cup V \\ s(\m{w}) = s(e_1) }} F_{\m{w}e_1}(e_1)\m{w}e_1e_1^* + \sum_{\substack{\m{h}^* \in \m{P}^* \cup V \\ s(\m{h}) = s(e_1) }} F_{[\m{h}e_1]^*}(e_1^*) e_1e_1^*\m{h^*}\\
  + \sum_{\substack{\m{w} \in \m{P} \cup V \\ s(\m{w}) = s(e_1) \\ r(\m{h}) = r(e_1) \\ w_z \ne e_1}}F_{\m{w}}(e_1)\m{w}e_1^* + \sum_{\substack{\m{h}^* \in \m{P}^* \cup V \\ s(\m{h}) = s(e_1) \\ r(\m{h}) = r(e_1) \\ h_z \ne e_1}} F_{\m{h}^*}(e_1^*) e_1\m{h}^* + \sum_{\substack{\m{wh^*} \in \m{P^*} \cup \m{M} \\ s(\m{w}) = s(e_1) }} F_{\m{wh^*}}(e_1) \m{w}[e_1\m{h}]^*\\
  + \sum_{\substack{\m{wh^*} \in \m{P} \cup \m{M} \\ s(\m{h}) = s(e_1) }} F_{\m{wh^*}}(e_1^*) e_1\m{wh^*}
 + \sum_{i=2}^\ell \sum_{\substack{\m{wh^*} \in \m{B} \\ s(\m{w}) = s(e_i) }} F_{\m{wh^*}}(e_i) \m{w}[e_i\m{h}]^* + \sum_{\substack{\m{wh^*} \in \m{B} \\ s(\m{h}) = s(e_i) }} F_{\m{wh^*}}(e_i^*) e_i\m{wh^*}.
\end{multline*}

Substituting $s(e) - \sum_{k=2}^\ell e_ke_k^*$ for $ee_1^*$ and adding up similar terms, we get:
\begin{eqnarray*}
   \left.\sum_{i=1}^\ell\widehat{\D}(e_i)e_i^*+e_i\widehat{\D}(e_i^*)\right|_V &=& \Bigl(F_{e_1}(e_1) +F_{e_1^*}(e_1^*)\Bigr)v,\\
   \left.\sum_{i=1}^\ell\widehat{\D}(e_i)e_i^*+e_i\widehat{\D}(e_i^*) \right|_\mathfrak{P} &=&
   \sum_{i=1}^\ell
   \sum_{\m{w} \in \m{P}\cup V}\Bigl(F_{e_i\m{w}e_1}(e_1)+ F_{\m{w}}(e_i^*)\Bigr)e_i\m{w},\\
  \left.\sum_{i=1}^\ell \widehat{\D}(e_i)e_i^*+e_i\widehat{\D}(e_i^*)\right|_{\mathfrak{P}^*} &=&
  \sum_{i=1}^\ell\sum_{\m{h} \in \m{P} \cup V}\Bigl(F_{\m{h}^*}(e_i)+F_{[e_i\m{h}e_1]^*}(e_1^*)\Bigr) [e_i\m{p}]^*,
\end{eqnarray*}
and
\begin{multline*}
  \left.\sum_{i=1}^\ell\widehat{\D}(e_i)e_i^*+e_i\widehat{\D}(e_i^*)\right|_\mathfrak{M} = \sum_{k=2}^\ell\Bigl(-F_{e_1}(e_1) + F_{e_k}(e_k) - F_{e_1^*}(e_1^*) + F_{e_k^*}(e_k^*)\Bigr)e_ke_k^* \\
 +
  \sum_{\substack{i=1\\k=2}}^\ell\sum_{\m{w} \in \m{P}\cup V}\Bigl(-F_{e_i\m{w}e_1}(e_1) + F_{e_i\m{w}e_k}(e_k) + F_{\m{w}e_ke_k^*}(e_i^*)\Bigr)e_i\m{w}e_ke_k^*\\
 +
  \sum_{\substack{i=1\\k=2}}^\ell\sum_{\m{h} \in \m{P}\cup V}\Bigl(-F_{[e_i\m{h}e_1]^*}(e_1^*) + F_{[e_i\m{h}e_k]^*}(e_k^*)+F_{e_k[\m{h}e_k]^*}(e_i)\Bigr)e_k[ e_i\m{h}e_k]^*\\
  + \sum_{\substack{i,j=1\\i\ne j}}^\ell\Bigl(F_{e_i}(e_j) + F_{e_j^*}(e_i^*)\Bigr)e_ie_j^*+
  \sum_{i,j=1}^\ell
  \sum_{\m{wh^*} \in \m{M}}\Bigl(F_{e_i\m{wh}^*}(e_j) + F_{\m{w}[e_j\m{h}]^*}(e_i^*)\Bigr)e_i\m{w}[e_j\m{h}]^*.
\end{multline*}

Since $\widehat{\D}(v) = 0$, we see that the equality $\sum_{i=1}^\ell\widehat{\D}(e_i)e_i^* + e_i\widehat{\D}(e_i^*) = \widehat{\D}(v)$ implies:
\begin{align*}
  &F_{e_i\m{p}e_1}(e_1) - F_{e_i\m{p}e_r}(e_r) - F_{\m{p}e_re_r^*}(e_i^*)=0, \\
  &F_{[e_i\m{p}e_1]^*}(e_1^*) - F_{e_i\m{p}e_r}(e_r^*) - F_{e_r[\m{p}e_r]^*}(e_i)^*=0,\\
  & F_{e_i}(e_j)+ F_{e_j}(e_i^*) = 0, & r(e_i) = r(e_j).
\end{align*}

It is easy to see that the first two equalities can be deduced from the first two equalities of Lemma \ref{e*e}. Indeed, by Lemma \ref{e*e}, $F_{\m{p}}(e^*) + F_{\m{p}ff^*}(e^*) + F_{e\m{p}f}(f) = 0$, so that $F_{\m{p}}(e_i^*) + F_{e_i\m{p}e_1}(e_1) = 0$ and we deduce the first equality. In the similar way one can deduce the second one. This completes the proof.
\end{proof}

\end{document}